\documentclass[12pt]{amsart}
\usepackage[utf8]{inputenc}
\usepackage{amsmath, amsthm, amsfonts, amssymb, graphicx, xcolor, fullpage}
\usepackage{comment, pst-all, pstricks}
\usepackage{float, subfigure, listings, csquotes, physics, tikz, mathdots, yhmath, cancel, siunitx, array, multirow, gensymb, tabularx, extarrows, booktabs, setspace}
\usepackage[colorlinks=true, allcolors=blue]{hyperref}
\usepackage{verbatim}

\usetikzlibrary{fadings, patterns, shadows.blur, shapes}

\setlength{\topmargin}{-.5cm}

\newtheorem{teo}{Theorem}[section]

\newtheorem{propo}[teo]{Proposition}

\newtheorem{defi}[teo]{Definition}

\begin{document}

\title{Bridging Graph-Theoretical and Topological Approaches: Connectivity and Jordan Curves in the Digital Plane}

\author{Yazmin Cote}
\address{Escuela de Matem\'aticas, Universidad Industrial de Santander,  Bucaramanga - Colombia.}
\email{yazmin2228080@correo.uis.edu.co}

\author{Carlos Uzc\'ategui-Aylwin}
\address{Escuela de Matem\'aticas, Universidad Industrial de Santander, Bucaramanga - Colombia.}
\email{cuzcatea@saber.uis.edu.co}

\date{}

\begin{abstract}
This article explores the connections between graph-theoretical and topological approaches in the study of the Jordan curve theorem for grids. Building on the foundational work of Rosenfeld, who developed adjacency-based concepts on $\mathbb{Z}^2$, and the subsequent introduction of the topological digital plane $\mathbb{K}^2$ with  the Khalimsky topology by Khalimsky, Kopperman, and Meyer, we investigate the interplay between these perspectives. Inspired  by the work of  Khalimsky, Kopperman, and Meyer, we define an operator 
$\Gamma^*$  transforming subsets of $\mathbb{Z}^2$ into subsets of $\mathbb{K}^2$. This operator is essential  for demonstrating how 8-paths, 4-connectivity, and other discrete structures in $\mathbb{Z}^2$ correspond to topological properties in 
$\mathbb{K}^2$. Moreover, we address whether the topological Jordan curve theorem for $\mathbb{K}^2$
can be derived from the graph-theoretical version on $\mathbb{Z}^2$. Our results illustrate the deep and intricate relationship between these two methodologies, shedding light on their complementary roles in digital topology.
\end{abstract} 

\subjclass[2020]{54H30, 05C10, 68U03}

\keywords{Digital topology, Khalimsky topology, Jordan curve}

\maketitle

\section{ Introduction}

The study of the topological properties of sets of grid points in $\mathbb{Z}^2$ (or in a higher-dimensional grid) originated with the work of Rosenfeld \cite{R1,R2}, who introduced graph-theoretical notions based on adjacency relations among points in $\mathbb{Z}^2$. His primary result was a Jordan curve theorem adapted to this setting. There is a vast literature about this topic, see for instance \cite{fajardoJonard2021,davis,kong,slapal2004,slapal2006} and reference there in. Rosenfeld's approach did not rely on an underlying topology on $\mathbb{Z}^2$; this was later achieved by Khalimsky, Kopperman, and Meyer \cite{Khalimsky, Khalimsky2}, who introduced a topology on $\mathbb{Z}$ (known as the Khalimsky topology) and proved a topological Jordan curve theorem for $\mathbb{Z}^2$ (with the product topology). Their proof of the topological Jordan curve theorem does not depend on Rosenfeld's theorem. However, as shown by Khalimsky, Kopperman, and Meyer \cite{Khalimsky2}, Rosenfeld's result can, in fact, be derived using the topological version of the Jordan curve theorem, revealing an interesting connection between the graph-theoretical and topological approaches. The main purpose of this article is to further explore the similarities between these two approaches.

The \textbf{Rosenfeld plane} is the grid $\mathbb{Z}^{2}$ equipped with two adjacency relations. The \textbf{8-adjacent} points to $p = (x, y) \in \mathbb{Z}^2$ are the points of the form $(x \pm 1, y \pm 1)$ together with the \textbf{4-adjacent} points, which are those of the form $(x \pm 1, y)$ and $(x, y \pm 1)$ (see Figure \ref{adyacencia}).

\begin{figure}[H]
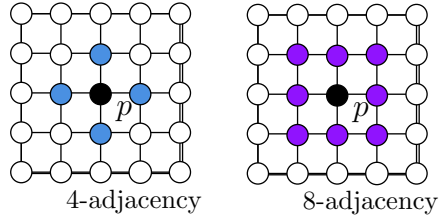

\centering

\tikzset{every picture/.style={line width=0.5pt}} 


    \caption{$N_4(p)$ and  $N_8(p)$.}
    \label{adyacencia}
\end{figure}

These notions allow for the natural introduction of the concept of a $k$-path and the corresponding notion of $k$-connectivity for $k=4,8$ (see section \ref{SeccionPlanoR} for the exact definitions). Rosenfeld (\cite{R1, R2}) proved the following result.

\begin{teo} 
\label{Jordank-intro}
Let $\{k, k^{\prime}\}=\{4,8\}$ and let $J$ be a closed $k$-curve. Then $\mathbb{Z}^2 \setminus J$ has two $k^{\prime}$-connected components. 
\end{teo}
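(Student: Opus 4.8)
The plan is to establish separately the two inequalities ``at most two'' and ``at least two'' for the number of $k'$-connected components of $\mathbb{Z}^2\setminus J$. Throughout, write $J=\langle p_0,p_1,\dots,p_{m-1},p_0\rangle$, so that consecutive indices (mod $m$) are $k$-adjacent while no two non-consecutive points of $J$ are $k$-adjacent; by the definition of a simple closed $k$-curve, $m$ is large enough that $J$ is non-degenerate.

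For ``at least two'', I would realize $J$ as a genuine polygonal Jordan curve $\widetilde J\subset\mathbb{R}^2$, joining each $p_i$ to $p_{i+1}$ by the straight segment between them (axial segments when $k=4$, diagonal ones when $k=8$). The classical Jordan curve theorem then splits $\mathbb{R}^2\setminus\widetilde J$ into a bounded region $U$ and an unbounded region $V$. The decisive observation is that a single $k'$-step, drawn as a segment, never meets $\widetilde J$: this is exactly where the duality $\{k,k'\}=\{4,8\}$ enters, since an axial unit segment and a diagonal unit segment can only intersect at a lattice point, and the vertices of a $k'$-path in $\mathbb{Z}^2\setminus J$ avoid $J$. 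Hence a $k'$-path, drawn as a polygonal arc, is a connected subset of $\mathbb{R}^2\setminus\widetilde J$ and so stays inside $U$ or inside $V$; the $k'$-components of $\mathbb{Z}^2\setminus J$ therefore refine the partition $\{U\cap\mathbb{Z}^2,\ V\cap\mathbb{Z}^2\}$, and it remains only to note that both $U$ and $V$ contain lattice points (for $U$ because $\widetilde J$ is a non-degenerate polygon).

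For ``at most two'' I would first note that every $k'$-component $C$ of $\mathbb{Z}^2\setminus J$ contains a point that is $k'$-adjacent to some $p_i$: take a $k'$-path in $\mathbb{Z}^2$ from $C$ to a point outside $C$ and look at the first vertex that leaves $C$; its predecessor lies in $C$, and the first vertex itself must lie on $J$, since otherwise it would be $k'$-adjacent to a point of $C$ and hence in $C$. So it suffices to show that the set $W$ of lattice points $k'$-adjacent to $J$ meets at most two $k'$-components. Traversing $J$ once, I would attach to each directed edge $p_ip_{i+1}$ the families $L_i$ and $R_i$ of lattice points of $W$ flanking that edge on the ``left'' and on the ``right''; going all the way around, these families exhaust $W$. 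The core local lemma is that, for every $i$, $L_i\cup L_{i+1}$ is contained in a single $k'$-component and likewise $R_i\cup R_{i+1}$. Chaining this around the closed curve yields that $L:=\bigcup_i L_i$ sits in one $k'$-component and $R:=\bigcup_i R_i$ in one $k'$-component, and since $W=L\cup R$ there are at most two components — exactly two, combined with the previous paragraph.

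The step I expect to be the genuine obstacle is this local lemma in the ``at most two'' half: it demands a finite but real case analysis over all configurations of two consecutive edges of $J$ (straight continuation and the several possible $k$-turns), verifying in each case that the flanking neighbours on a fixed side are joined by a short $k'$-path missing $J$. The two values of $k$ behave differently at turns (roughly: for a $4$-curve the outer flanking neighbours at a turn are joined by a diagonal $8$-step, and for an $8$-curve by an axial $4$-step), and one must also check that ``left'' and ``right'' are assigned coherently all the way around $J$ — which they are, precisely because $J$ is \emph{simple}, so that $\widetilde J$ genuinely has two sides. A shorter but less self-contained alternative, closer to the viewpoint of this paper, would be to transport $J$ to the subset $\Gamma^*(J)$ of the Khalimsky plane $\mathbb{K}^2$, verify that $\Gamma^*(J)$ is a topological Jordan curve there, invoke the topological Jordan curve theorem for $\mathbb{K}^2$, and pull the two resulting connected components back through the correspondence between connectedness in $\mathbb{K}^2$ and $k'$-connectedness in $\mathbb{Z}^2$; in that route the difficulty migrates to establishing precisely those two properties of the operator $\Gamma^*$.
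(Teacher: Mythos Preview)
The paper does not give its own proof of this statement: Theorem~\ref{Jordank-intro} (restated as Theorem~\ref{Jordank}) is attributed to Rosenfeld and is used as a black box throughout. So there is no ``paper's proof'' to compare against; the relevant comparison is with Rosenfeld's original arguments and with the Khalimsky--Kopperman--Meyer derivation that the paper cites.

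Your first route---embedding $J$ as a polygonal Jordan curve $\widetilde J\subset\mathbb{R}^2$, invoking the classical Jordan curve theorem for ``at least two'', and doing a left/right flank analysis along $J$ for ``at most two''---is essentially the strategy of Rosenfeld's original papers. Your identification of the local lemma (that consecutive left-flanks $L_i\cup L_{i+1}$, and likewise right-flanks, lie in a single $k'$-component) as the crux is accurate; this is exactly where the tedious but finite case analysis lives in Rosenfeld's proof. One small correction: the non-degeneracy you need (that the bounded region $U$ contains a lattice point) is \emph{not} a consequence of the definition of a closed $k$-curve---the unit square is a closed $4$-curve whose polygonal interior contains no lattice point. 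The paper handles this by adding the hypothesis ``more than four points'' in Theorem~\ref{Jordank}; you should do the same rather than appeal to the definition. Also, for $k=8$ you should note that $\widetilde J$ is genuinely simple (two diagonal edges cannot cross, because that would force a vertex of $J$ to have three $8$-neighbours in $J$).

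Your second route---transporting $J$ to $\mathbb{K}^2$ via $\Gamma^*$, invoking the topological Jordan curve theorem there, and pulling the two components back---is precisely the derivation of Khalimsky, Kopperman and Meyer that the paper sketches in the introduction and that motivates the operator $\Gamma^*$. The paper's own work goes in the \emph{opposite} direction (from Rosenfeld to the topological version), so this alternative is consistent with the literature the paper builds on but is not what the paper itself develops.
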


The use of both adjacency relations may seem problematic, but it is necessary due to counterintuitive examples where the Jordan curve theorem fails if only one of them is applied (see \cite{KongRosenfeld1989}). This issue, however, does not arise in the topological approach.

The  {\em digital topology} (or {\em  Khalimsky topology}) on  $\mathbb{Z}$ is generated by the following sets: 
$$
N( n)=
\begin{cases}\{n\} & \text { If } n \text { is odd. } \\ \{n-1,n,n+1 \} & \text {If } n \text { is even.}
\end{cases}
$$
We denote by $\mathbb{K}$ the digital line, i.e., $\mathbb{Z}$ with the digital topology. It is an connected Alexandroff space where $N(n)$ is the minimal open set containing $n$ for each $n \in \mathbb{Z}$. The finite intervals of $\mathbb{K}$ are analogous to the compact intervals of $\mathbb{R}$ and serve to define curves on the digital plane $\mathbb{K}^2$ with its product topology. Specifically, $C \subseteq \mathbb{K}^2$ is an arc if, with the subspace topology, it is homeomorphic to a finite interval of $\mathbb{K}$, and $J \subseteq \mathbb{K}^2$ is a Jordan curve if it is connected and $J \setminus \{a\}$ is an arc for every $a \in J$ (more information is given in Section \ref{SeccionTopK}). Khalimsky, Kopperman, and Mayer \cite{Khalimsky} proved the following:

\begin{teo} 
\label{JordanTOP-intro} 
If $J$ is a Jordan curve on $\mathbb{K}^2$, then $\mathbb{K}^2 \setminus J$ has two connected components. 
\end{teo}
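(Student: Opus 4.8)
The plan is to obtain this theorem as a consequence of Rosenfeld's Theorem~\ref{Jordank-intro}, using the operator $\Gamma^*$ to translate the discrete separation statement on $\mathbb{Z}^2$ into the topological one on $\mathbb{K}^2$ (so, in contrast with the original self-contained argument of Khalimsky, Kopperman and Meyer, the discrete theorem is taken as the input). The first and heaviest ingredient is a structural description of Khalimsky Jordan curves: every Jordan curve $J\subseteq\mathbb{K}^2$ is, after a translation of $\mathbb{K}^2$ if needed, of the form $J=\Gamma^*(\tilde J)$ for a closed $k$-curve $\tilde J\subseteq\mathbb{Z}^2$ with $k\in\{4,8\}$, the value of $k$ recording whether $J$ passes through ``pure'' (doubly-odd) points. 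I would isolate this as a proposition, presumably already available from the sections developing $\Gamma^*$; its proof is where most of the work sits, as one must check that $\Gamma^*(\tilde J)$ is thin and locally a Khalimsky interval --- so that $\Gamma^*(\tilde J)\setminus\{a\}$ is an arc for every $a$ --- exactly when $\tilde J$ satisfies Rosenfeld's defining condition that $p_i,p_j$ are $k$-adjacent only if $|i-j|\equiv 1\pmod n$, and conversely reconstruct $\tilde J$ as the set of \emph{base points} of $J$.

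Granting this, write $\{k,k'\}=\{4,8\}$ with $J=\Gamma^*(\tilde J)$, and apply Theorem~\ref{Jordank-intro}: $\mathbb{Z}^2\setminus\tilde J$ has exactly two $k'$-connected components $A$ and $B$. Let $\iota\colon\mathbb{Z}^2\hookrightarrow\mathbb{K}^2$ be the inclusion of base points, so that $\iota(\mathbb{Z}^2\setminus\tilde J)\subseteq\mathbb{K}^2\setminus J$. The core of the proof is to set up a bijection between the connected components of $\mathbb{K}^2\setminus J$ and $\{A,B\}$ by establishing three facts: (i) every component of $\mathbb{K}^2\setminus J$ meets $\iota(\mathbb{Z}^2\setminus\tilde J)$, because each non-base point of $\mathbb{K}^2\setminus J$ has a base neighbour still lying in $\mathbb{K}^2\setminus J$ --- otherwise all of its relevant base neighbours would lie in $\tilde J$ and would force the point itself into $\Gamma^*(\tilde J)=J$; (ii) if $p,q\in\mathbb{Z}^2\setminus\tilde J$ are $k'$-adjacent then $\iota(p)$ and $\iota(q)$ lie in the same component of $\mathbb{K}^2\setminus J$, since the Khalimsky point mediating that adjacency belongs to $\Gamma^*(\tilde J)$ only when $pq$ is an edge of $\tilde J$, which fails because $p\notin\tilde J$; and (iii) conversely, any path in $\mathbb{K}^2\setminus J$ joining $\iota(p)$ to $\iota(q)$ projects to a $k'$-path in $\mathbb{Z}^2\setminus\tilde J$ from $p$ to $q$. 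Items (i)--(iii) together make ``component $\mapsto$ the $k'$-component containing its base points'' a well-defined bijection onto $\{A,B\}$, so $\mathbb{K}^2\setminus J$ has exactly two connected components.

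The main obstacle is (iii), concretely in the case $k=8$ (hence $k'=4$): a step of a path in $\mathbb{K}^2\setminus J$ through a pure point corresponds to a diagonal move in $\mathbb{Z}^2$, which is not a $4$-move. I would deal with this by choosing for each vertex $x$ of the $\mathbb{K}^2$-path a base representative $\rho(x)\in\mathbb{Z}^2\setminus\tilde J$ --- $x$ itself when it is a base point, and otherwise a base neighbour of $x$ avoiding $\tilde J$, which exists by (i) --- and checking, case by case on the adjacency type of $x_i$ and $x_{i+1}$, that $\rho(x_i)$ and $\rho(x_{i+1})$ are always equal, $4$-adjacent, or opposite corners of a unit square. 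In the last case, rerouting the diagonal through one of the two remaining corners fails only if both of them lie in $\tilde J$; but then the pure point mediating that diagonal edge of $\tilde J$ would be one of the points of the given path, hence would lie in $J$, a contradiction --- so the reroute is always possible inside $\mathbb{Z}^2\setminus\tilde J$. This is precisely the point where the non-trivial content of the identity $J=\Gamma^*(\tilde J)$ --- that $\Gamma^*$ of an $8$-curve absorbs every pure point mediating one of its diagonal edges --- is what turns Rosenfeld's discrete separation theorem into the topological one. (When $k=4$ the situation is easier: $\Gamma^*(\tilde J)$ then contains no pure points, every mediating pure point is automatically available in the complement, and (iii) needs no rerouting; and the two resulting components are patently distinct since $A\neq B$.)
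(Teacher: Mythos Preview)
Your proposal rests on the structural claim that every Khalimsky Jordan curve $J$ is, up to translation, of the form $\Gamma^*(\tilde J)$ for some closed $k$-curve $\tilde J\subseteq\mathbb{Z}^2$. This is exactly the step the paper cannot supply, and it is false as stated. The paper does \emph{not} prove Theorem~\ref{JordanTOP-intro}; the theorem is quoted from \cite{Khalimsky}, and the explicit purpose of Section~\ref{sec-curvaJordan} is to see how far one can get towards it from Rosenfeld's Theorem~\ref{Jordank}. The paper succeeds only under extra hypotheses --- $J$ consisting of pure points (Theorem~\ref{solopuros}), or $\Gamma^*(\Gamma^{-1}(J))=J$ (Theorem~\ref{mixtoconcodiccion}), or one further special case (Theorem~\ref{dos_compunentes_1}) --- and then states that ``a complete solution to this problem remains elusive.''

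Concretely, Theorem~\ref{8caminos-gamma*} shows that $\Gamma^{-1}(J)$ is a closed $8$-curve \emph{if and only if} $\Gamma^*(\Gamma^{-1}(J))=J$. The paper then introduces the obstruction set $S_J=\{m\ \text{mixed}: |A(m)\cap J|=3\}$ and exhibits Jordan curves with $S_J\neq\emptyset$ (Figure~\ref{forma_1}); for such $J$ one has $\Gamma^*(\Gamma^{-1}(J))\supsetneq J$, so $\Gamma^{-1}(J)$ is not a closed $8$-curve, and since $J$ contains mixed points it does not fall under the pure-points case either. A translation of $\mathbb{K}^2$ does not help: the condition $|A(m)\cap J|=3$ is local and translation-invariant. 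Thus the proposition you flag as ``presumably already available from the sections developing $\Gamma^*$'' is not available --- it is the heart of the open problem, not a preliminary lemma. The remaining steps (your (i)--(iii)) are essentially what the paper carries out in Theorems~\ref{solopuros} and~\ref{mixtoconcodiccion} once the structural hypothesis is \emph{assumed}, so your outline would reproduce those partial results but not the full theorem.
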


As mentioned previously, Khalimsky et al. \cite{Khalimsky2} derived Theorem \ref{Jordank-intro} from Theorem \ref{JordanTOP-intro}. We present a sketch of their argument as motivation for our work. The key idea was to introduce the map $\Gamma: \mathbb{Z}^2 \rightarrow \mathbb{K}^2$ defined by $\Gamma(x, y) = (x + y, y - x)$. While it is not true that $\Gamma$ transforms a closed $k$-curve on $\mathbb{Z}^2$ into a Jordan curve on $\mathbb{K}^2$, given a closed $k$-curve $J$ on $\mathbb{Z}^2$, they found a way to expand $\Gamma(J)$ to a larger set, $\Gamma(J)^*$, such that $\Gamma(J)^*$ is a Jordan curve, allowing Theorem \ref{JordanTOP-intro} to be applied to obtain the two components, $A$ and $B$, of $\mathbb{K}^{2} \setminus \Gamma(J)^*$. Finally, they showed that $\Gamma^{-1}(A)$ and $\Gamma^{-1}(B)$ are the two $k'$-components of $\mathbb{Z}^2 \setminus J$.

Inspired by their argument, we define an operator $\Gamma^*$ that  transforms subsets of $\mathbb{Z}^2$ into subsets of $\mathbb{K}^2$ (see \eqref{defoperador*} for the definition). This operator  will play a crucial role in our work. 
The following are examples of the type of results we are presenting (see sections \ref{sec-preser-curvas} and \ref{sec-prese-conex}):

\begin{itemize}
\item[-]
$C$ is an 8-path in $\mathbb{Z}^2$ if and only if $\Gamma^{*}(C)$ is an arc in $\mathbb{K}^2$.

\item[-] 
For an arc  $D$ in $\mathbb{K}^2$,  $\Gamma^{-1}(D)$ is an 8-path in $\mathbb{Z}^2$ if and only if $\Gamma^{*}(\Gamma^{-1}(D)) = D$.

\item[-] 
$A \subseteq \mathbb{Z}^{2}$ is 4-connected if and only if $\Gamma(A)$ is connected in $\mathbb{K}^{2}$.

\item[-] 
$A \subseteq \mathbb{Z}^{2}$ is 8-connected if and only if $\Gamma^{*}(A)$ is connected in $\mathbb{K}^{2}$. 
\end{itemize}

\noindent The simplicity of the statements conceals the intricate and extensive case-by-case arguments used in its proofs.

A natural question is whether Theorem \ref{JordanTOP-intro} can be proved using Theorem \ref{Jordank-intro}. In Section \ref{sec-curvaJordan}, we explore this issue, which served as our primary motivation. Using Theorem \ref{Jordank-intro}, we show that if $J$ is a Jordan curve in $ \mathbb{K}^2$ with more than five points and either $J \subseteq \Gamma(\mathbb{Z}^2)$ or $\Gamma^{*}(\Gamma^{-1}(J)) = J$, then $\mathbb{K}^2 \setminus J$ has two components.

Our results reveal a strong connection between both approaches: the graph-theoretical on $\mathbb{Z}^2$ and the topological  on $\mathbb{K}^2$,  shedding light on their complementary roles in digital topology.

\section{ Rosenfeld plane}
\label{SeccionPlanoR}

The \textbf{Rosenfeld plane} is $\mathbb{Z}^{2}$, equipped with specific adjacency relations. The points that are {\bf $8$-adjacent} to a point $p = (x, y) \in \mathbb{Z}^2$ are those of the form $(x \pm 1, y \pm 1)$ together with the  {\bf $4$-adjacent} points that are those of the form $(x \pm 1, y)$ and $(x, y \pm 1)$. The set of points that are $8$-adjacent to $p$ (respectively, $4$-adjacent) is denoted by $N_8(p)$ (respectively, $N_4(p)$). These sets are illustrated in Figure \ref{adyacencia}.

Adjacencies allow for the introduction of a notion of connectivity for subsets of $\mathbb{Z}^2$ that does not rely on topological considerations. We say that two subsets $A$ and $B$ of $\mathbb{Z}^2$ are \textbf{4-adjacent} if there exist $x \in A$ and $z \in B$ such that $x$ and $z$ are 4-adjacent.  

A subset $S$ of $\mathbb{Z}^2$ is \textbf{4-disconnected} if it can be divided into two nonempty subsets that are not 4-adjacent to each other. Otherwise, $S$ is said to be \textbf{4-connected}. A \textbf{4-component} of a subset $S$ of $\mathbb{Z}^2$ is a maximal 4-connected subset of $S$. Analogously, we define the notions of an \textbf{8-connected set} and an \textbf{8-component}.  

Adjacencies also allow for defining the concept of a path. Let $C$ be a subset of $\mathbb{Z}^2$. A point in $C$ is said to be \textbf{8-endpoint} if it has exactly one point in $C$ that is 8-adjacent to it. Analogously, we define when a point is \textbf{4-endpoint}.  

\begin{defi}  
A finite subset $C \subseteq \mathbb{Z}^2$ is an \textbf{8-path} if it has exactly two 8-endpoints, and all other points in $C$ have exactly two points in $C$ that are 8-adjacent. Analogously, we define a \textbf{4-path}.  
\end{defi}  

In other words, an $8$-path is a set $\{x_0, x_1, x_2, \ldots, x_n\}$ of points in $\mathbb{Z}^2$ such that $x_i$ and $x_j$, with $i < j \leq n$, are 8-adjacent if and only if $j = i+1$.  The following proposition is useful for determining  when a set is $k$-connected. 

\begin{propo}  
\label{curvasimple}  
A subset $A \subseteq \mathbb{Z}^2$ is $8$-connected (respectively, $4$-connected) if and only if, for all $x, y \in A$, there exists an $8$-path (respectively, $4$-path) with $x$ and $y$ as the 8-endpoint points (respectively, 4-endpoint).  
\end{propo}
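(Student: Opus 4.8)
The plan is to route both implications through the notion of a \emph{walk} in $A$: a finite sequence $z_0,z_1,\dots,z_m$ of points of $A$ with $z_i$ and $z_{i+1}$ being $8$-adjacent for each $i$. I will write the argument only for the $8$-case; the $4$-case is identical after replacing every ``$8$'' by ``$4$''. I read ``there exists an $8$-path with $x$ and $y$ as endpoints'' as ``there exists an $8$-path $C\subseteq A$ whose two $8$-endpoints are $x$ and $y$'', and I may assume $x\neq y$.

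For the ``if'' direction I would first observe that an $8$-path $C=\{x_0,\dots,x_n\}$ (indexed so that $x_i,x_j$ are $8$-adjacent iff $|i-j|=1$) is $8$-connected: given a partition $C=S_1\sqcup S_2$ into nonempty, mutually non-$8$-adjacent pieces with $x_0\in S_1$, the least index $i$ with $x_i\in S_2$ yields $8$-adjacent points $x_{i-1}\in S_1$ and $x_i\in S_2$, a contradiction. Then, assuming for contradiction that $A=S_1\sqcup S_2$ with $S_1,S_2$ nonempty and not $8$-adjacent, I pick $x\in S_1$ and $y\in S_2$ and an $8$-path $C\subseteq A$ joining them; the partition $C=(C\cap S_1)\sqcup(C\cap S_2)$ has both parts nonempty (they contain $x$ and $y$) and not $8$-adjacent, contradicting the $8$-connectedness of $C$. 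Hence $A$ is $8$-connected.

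For the ``only if'' direction I would proceed in two steps. Step one: from $8$-connectedness, extract a walk in $A$ from $x$ to $y$. Let $W$ be the set of points of $A$ reachable from $x$ by a walk in $A$; then $W\ne\emptyset$, and no point of $W$ is $8$-adjacent to a point of $A\setminus W$ (that would extend a walk), so $8$-connectedness forces $A\setminus W=\emptyset$, whence $y\in W=A$. Step two: among all walks in $A$ from $x$ to $y$ choose one $z_0=x,\dots,z_m=y$ of minimal length $m$ (so $m\ge 1$). Minimality makes the $z_i$ pairwise distinct (a repetition $z_i=z_j$ with $i<j$ would let me excise $z_{i+1},\dots,z_j$) and makes $z_i,z_j$ non-$8$-adjacent whenever $j\ge i+2$ (otherwise $z_0,\dots,z_i,z_j,\dots,z_m$ is strictly shorter). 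Thus $C:=\{z_0,\dots,z_m\}\subseteq A$ satisfies: $z_i,z_j$ are $8$-adjacent iff $|i-j|=1$; each interior $z_i$ has exactly the two $8$-neighbours $z_{i-1},z_{i+1}$ in $C$, and $z_0=x$, $z_m=y$ each have exactly one. So $C$ is an $8$-path whose $8$-endpoints are $x$ and $y$, as required.

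I do not expect a genuinely hard step: this is the classical ``a shortest walk is an induced path'' lemma, transported to the $8$- and $4$-adjacency graphs on $\mathbb{Z}^2$. The points that require care are matching the \emph{induced} character built into the definition of a $k$-path — so minimality must be used to kill all chords, not only repetitions — and the bookkeeping verifying that the extracted set has exactly two $k$-endpoints, namely $x$ and $y$, with every other point having exactly two $k$-neighbours in it. The only place the purely combinatorial definition of $k$-connectedness is used is the reachability argument in step one, and that is routine.
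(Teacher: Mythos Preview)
Your proof is correct; the paper states this proposition without proof, treating it as a standard fact, so there is no argument to compare against. Your route---existence of a walk via a reachability argument, then shortening a minimal walk to an induced path---is exactly the standard graph-theoretic proof, and you were careful to use minimality to eliminate chords (not just repetitions), which is precisely what the paper's induced-path definition of a $k$-path requires.
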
  

\begin{defi}  
A finite subset $C \subseteq \mathbb{Z}^2$ is a \textbf{closed $8$-curve} if every point in $C$ has exactly two points in $C$ that are 8-adjacent to it. Analogously, we define a \textbf{closed $4$-curve}.  
\end{defi}  

The following result is the Jordan curve theorem proven by Rosenfeld (\cite{R1,R2}). The statement we use below is as it appears in \cite{kong}.

\begin{teo} (Rosenfeld's Jordan curve Theorem) \label{Jordank} Let $\{k, k^{\prime}\} = \{4, 8\}$. If $J$ is a  closed $k$-curve with more than 4 points, then: \begin{enumerate} \item $\mathbb{Z}^2 \setminus J$ has two $k^{\prime}$-components.

\item Each point of $J$ has a $k'$-neighbor in each of the two $k'$-components of $\mathbb{Z}^2 \setminus J$. \end{enumerate}
\end{teo}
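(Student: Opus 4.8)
The plan is to reduce to the classical Jordan curve theorem in $\mathbb{R}^2$ by replacing $J$ with a genuine polygonal simple closed curve. First I would enumerate the points of $J$ as $x_0,\dots,x_{n-1}$ cyclically (indices mod $n$) so that $x_i$ is $k$-adjacent to $x_{i+1}$ and to no other $x_j$; this is possible because $J$ is connected and every one of its points has exactly two $k$-neighbours in $J$, so the $k$-adjacency graph on $J$ is an induced cycle, and in particular $J$ is \emph{chord-free}: non-consecutive points of $J$ are never $k$-adjacent. Set $\widetilde J=\bigcup_i[x_i,x_{i+1}]\subseteq\mathbb{R}^2$, the union of the straight segments joining consecutive points. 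Each such segment has Euclidean length $1$ or $\sqrt{2}$, hence contains no lattice point in its relative interior, so $\mathbb{Z}^2\cap\widetilde J=J$. I would then verify that $\widetilde J$ is a simple closed curve: a point $x_j$ cannot lie in the relative interior of an edge (no interior lattice points), and two edges without a common endpoint are disjoint — a unit axis segment lies along a grid line and has no interior lattice point while a unit diagonal meets the grid lines only at its endpoints, so axis/axis and axis/diagonal pairs cannot cross, and a transverse crossing of two unit diagonals would have its four endpoints at the corners of a single unit cell, which are pairwise $8$-adjacent, forcing some $x_i$ to have three $8$-neighbours in $J$. Applying the classical Jordan curve theorem, $\mathbb{R}^2\setminus\widetilde J=\operatorname{Int}\sqcup\operatorname{Ext}$, with $\operatorname{Ext}$ unbounded.

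Put $A=\mathbb{Z}^2\cap\operatorname{Int}$ and $B=\mathbb{Z}^2\cap\operatorname{Ext}$, so that $\mathbb{Z}^2\setminus J=A\sqcup B$. Then $B\neq\emptyset$ since $\operatorname{Ext}$ is unbounded, and $A\neq\emptyset$ is where the hypothesis $n>4$ is used: a short argument shows $\operatorname{Int}$ contains a lattice point unless $J$ is one of the finitely many degenerate small curves that the hypothesis excludes (a unit square, or a triangle on three vertices of a unit cell). The key separation step is that no point of $A$ is $k'$-adjacent to a point of $B$; equivalently, if $p,q\in\mathbb{Z}^2\setminus J$ are $k'$-adjacent, then the straight segment $[p,q]$ is disjoint from $\widetilde J$, so $p$ and $q$ lie in the same component of $\mathbb{R}^2\setminus\widetilde J$. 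This is where the $k$/$k'$ duality enters. If $J$ is a $4$-curve then $k'=8$: the edges of $\widetilde J$ are unit axis segments lying on grid lines, while $[p,q]$ is a unit axis or a unit diagonal segment, and in either case $[p,q]$ meets the grid lines only at lattice points which, not belonging to $J=\mathbb{Z}^2\cap\widetilde J$, do not lie on $\widetilde J$. If $J$ is an $8$-curve then $k'=4$: $[p,q]$ is a unit axis segment, meeting a diagonal edge of $\widetilde J$ only at that edge's lattice endpoints and meeting an axis edge only at a shared endpoint, again forcing any intersection point into $J$ — a contradiction. Hence $[p,q]\cap\widetilde J=\emptyset$ in both cases.

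It remains to show that $A$ and $B$ are each $k'$-connected; then they are precisely the two $k'$-components of $\mathbb{Z}^2\setminus J$. Given $p,q$ in the same component $U$ of $\mathbb{R}^2\setminus\widetilde J$, join them by a polygonal arc $\gamma\subseteq U$ in general position with respect to the integer grid (meeting no lattice point except $p,q$ and crossing grid lines transversally, finitely often, away from lattice points). Record the finite sequence of open unit cells that $\gamma$ traverses; a cell may be bisected into two triangles by a diagonal edge of $\widetilde J$, in which case $\gamma$ stays within one of them, and a cell can be so bisected by at most one diagonal since two crossing diagonal edges are excluded. In each traversed cell-piece $P$ the lattice corners of $P$ not lying on $\widetilde J$ belong to $U$, and I claim they form a single $k'$-connected subset of $\mathbb{Z}^2\cap U$: for $k'=8$ this is immediate since the corners of a cell are pairwise $8$-adjacent, while for $k'=4$ the only obstructing configurations — two opposite corners of a cell both in $J$, or three corners of a cell in $J$ — would create an $8$-chord or a triangle in the $k$-adjacency graph of $J$, contradicting chord-freeness. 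Consecutive cell-pieces share lattice corners, so concatenating these short $k'$-paths along $\gamma$ yields a $k'$-path from $p$ to $q$ inside $\mathbb{Z}^2\cap U$.

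For part (2), fix $x_i\in J$; near $x_i$ the curve $\widetilde J$ consists of the two edges $[x_{i-1},x_i]$ and $[x_i,x_{i+1}]$, which cut a small disc about $x_i$ into two sectors, one lying in $\operatorname{Int}$ and one in $\operatorname{Ext}$ (since $x_i$ lies on the common boundary of both regions). Because $x_i$ has exactly two $k$-neighbours in $J$ while $n>4$, several of its $k'$-neighbours lie off $J$, and — crucially using that chord-freeness forbids the sharpest turns, i.e.\ a turn at $x_i$ through less than a right angle would make $x_{i-1}$ and $x_{i+1}$ $8$-adjacent — a finite check of the remaining local pictures shows that one such off-$J$ $k'$-neighbour falls into each of the two sectors, hence into $A$ and into $B$ respectively. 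I expect the genuine difficulty to be the connectivity step, with its bookkeeping over cells, triangle-halves and corner configurations, which is the source of the intricate case analysis; the separation step, by contrast, becomes short once the duality is isolated. (Alternatively, one could avoid $\mathbb{R}^2$ entirely and deduce the theorem from Theorem~\ref{JordanTOP-intro} through the operator $\Gamma^{*}$ developed later in the paper, as Khalimsky, Kopperman and Meyer did.)
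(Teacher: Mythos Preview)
The paper does not prove this theorem: it is quoted as Rosenfeld's result, with references to \cite{R1,R2} and \cite{kong}, and is used thereafter as a black box (for instance in Propositions~\ref{diferentescomponentes} and in Section~\ref{sec-curvaJordan}). So there is no proof in the paper to compare against.

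Your proposal follows the classical route --- realize $J$ as a polygonal simple closed curve $\widetilde J\subseteq\mathbb R^2$, invoke the planar Jordan curve theorem, and pull the two regions back to $\mathbb Z^2$ --- which is essentially Rosenfeld's own strategy. The structure is sound and you locate the $k/k'$ duality at the right spot (the separation step). A few points are thinner than you suggest. First, the claim $A=\mathbb Z^2\cap\operatorname{Int}\neq\emptyset$ for $n>4$ is not a one-liner: for closed $8$-curves one typically combines Pick's theorem with chord-freeness to bound the enclosed area, and your list of ``degenerate small curves'' is not complete as stated. Second, in the connectivity step a triangle-half produced by a diagonal $J$-edge has only \emph{one} off-$J$ corner, so you must verify that this single corner is also the shared off-$J$ corner with the next cell-piece that $\gamma$ enters (it is, since $\gamma$ cannot cross the diagonal edge, but this should be said). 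Third, part~(2) hides a genuine finite case analysis of the local turn at $x_i$; ``a finite check'' is correct but is where most of the work sits. None of this is fatal.

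The alternative you mention at the end --- deducing Theorem~\ref{Jordank} from Theorem~\ref{JordanTOP-intro} via $\Gamma^*$ --- is precisely the Khalimsky--Kopperman--Meyer argument of \cite{Khalimsky2}. The present paper runs in the \emph{opposite} direction: it takes Theorem~\ref{Jordank} as given and uses it, together with the operator $\Gamma^*$, to recover cases of the topological version (Theorems~\ref{solopuros}, \ref{mixtoconcodiccion}, \ref{dos_compunentes_1}).
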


In the following result, we will use the enumeration of the 8-neighborhood of a point $p \in \mathbb{Z}^2$ as shown in Figure \ref{ejemplosimple}.

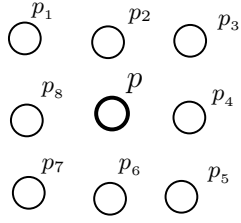
\begin{figure}[H]
  \centering
\tikzset{every picture/.style={line width=0.75pt}} 

\begin{tikzpicture}[x=0.75pt,y=0.75pt,yscale=-1,xscale=1]

\draw  [fill={rgb, 255:red, 255; green, 255; blue, 255 }  ,fill opacity=1 ] (164,91.79) .. controls (164,87.37) and (167.58,83.78) .. (172.01,83.78) .. controls (176.43,83.78) and (180.01,87.37) .. (180.01,91.79) .. controls (180.01,96.21) and (176.43,99.79) .. (172.01,99.79) .. controls (167.58,99.79) and (164,96.21) .. (164,91.79) -- cycle ;
\draw  [color={rgb, 255:red, 0; green, 0; blue, 0 }  ,draw opacity=1 ][fill={rgb, 255:red, 255; green, 255; blue, 255 }  ,fill opacity=1 ][line width=0.75]  (81.01,51.87) .. controls (81.01,47.44) and (84.59,43.86) .. (89.01,43.86) .. controls (93.43,43.86) and (97.02,47.44) .. (97.02,51.87) .. controls (97.02,56.29) and (93.43,59.87) .. (89.01,59.87) .. controls (84.59,59.87) and (81.01,56.29) .. (81.01,51.87) -- cycle ;
\draw  [color={rgb, 255:red, 0; green, 0; blue, 0 }  ,draw opacity=1 ][fill={rgb, 255:red, 255; green, 255; blue, 255 }  ,fill opacity=1 ][line width=0.75]  (164.41,53.06) .. controls (164.41,48.64) and (168,45.05) .. (172.42,45.05) .. controls (176.84,45.05) and (180.42,48.64) .. (180.42,53.06) .. controls (180.42,57.48) and (176.84,61.07) .. (172.42,61.07) .. controls (168,61.07) and (164.41,57.48) .. (164.41,53.06) -- cycle ;
\draw  [color={rgb, 255:red, 0; green, 0; blue, 0 }  ,draw opacity=1 ][fill={rgb, 255:red, 255; green, 255; blue, 255 }  ,fill opacity=1 ][line width=0.75]  (83,129.79) .. controls (83,125.37) and (86.58,121.78) .. (91.01,121.78) .. controls (95.43,121.78) and (99.01,125.37) .. (99.01,129.79) .. controls (99.01,134.21) and (95.43,137.79) .. (91.01,137.79) .. controls (86.58,137.79) and (83,134.21) .. (83,129.79) -- cycle ;
\draw  [color={rgb, 255:red, 0; green, 0; blue, 0 }  ,draw opacity=1 ][fill={rgb, 255:red, 255; green, 255; blue, 255 }  ,fill opacity=1 ][line width=1.5]  (125,89.79) .. controls (125,85.37) and (128.58,81.78) .. (133.01,81.78) .. controls (137.43,81.78) and (141.01,85.37) .. (141.01,89.79) .. controls (141.01,94.21) and (137.43,97.79) .. (133.01,97.79) .. controls (128.58,97.79) and (125,94.21) .. (125,89.79) -- cycle ;
\draw  [color={rgb, 255:red, 0; green, 0; blue, 0 }  ,draw opacity=1 ][fill={rgb, 255:red, 255; green, 255; blue, 255 }  ,fill opacity=1 ] (160,131.79) .. controls (160,127.37) and (163.58,123.78) .. (168.01,123.78) .. controls (172.43,123.78) and (176.01,127.37) .. (176.01,131.79) .. controls (176.01,136.21) and (172.43,139.79) .. (168.01,139.79) .. controls (163.58,139.79) and (160,136.21) .. (160,131.79) -- cycle ;
\draw  [color={rgb, 255:red, 0; green, 0; blue, 0 }  ,draw opacity=1 ][fill={rgb, 255:red, 255; green, 255; blue, 255 }  ,fill opacity=1 ][line width=0.75]  (123,53.79) .. controls (123,49.37) and (126.58,45.78) .. (131.01,45.78) .. controls (135.43,45.78) and (139.01,49.37) .. (139.01,53.79) .. controls (139.01,58.21) and (135.43,61.79) .. (131.01,61.79) .. controls (126.58,61.79) and (123,58.21) .. (123,53.79) -- cycle ;
\draw  [color={rgb, 255:red, 0; green, 0; blue, 0 }  ,draw opacity=1 ][fill={rgb, 255:red, 255; green, 255; blue, 255 }  ,fill opacity=1 ][line width=0.75]  (82.02,93.25) .. controls (82.02,88.83) and (85.61,85.25) .. (90.03,85.25) .. controls (94.45,85.25) and (98.03,88.83) .. (98.03,93.25) .. controls (98.03,97.67) and (94.45,101.26) .. (90.03,101.26) .. controls (85.61,101.26) and (82.02,97.67) .. (82.02,93.25) -- cycle ;
\draw  [color={rgb, 255:red, 0; green, 0; blue, 0 }  ,draw opacity=1 ][fill={rgb, 255:red, 255; green, 255; blue, 255 }  ,fill opacity=1 ] (124,132.79) .. controls (124,128.37) and (127.58,124.78) .. (132.01,124.78) .. controls (136.43,124.78) and (140.01,128.37) .. (140.01,132.79) .. controls (140.01,137.21) and (136.43,140.79) .. (132.01,140.79) .. controls (127.58,140.79) and (124,137.21) .. (124,132.79) -- cycle ;

\draw (139.01,68.19) node [anchor=north west][inner sep=0.75pt]    {$p$};
\draw (91.02,32.27) node [anchor=north west][inner sep=0.75pt]  [font=\scriptsize]  {$p_{_{1}}$};
\draw (140.01,35.19) node [anchor=north west][inner sep=0.75pt]  [font=\scriptsize]  {$p_{2}$};
\draw (185.02,38.27) node [anchor=north west][inner sep=0.75pt]  [font=\scriptsize]  {$p_{_{3}}$};
\draw (182.01,76.19) node [anchor=north west][inner sep=0.75pt]  [font=\scriptsize]  {$p_{_{4}}$};
\draw (180.01,115.19) node [anchor=north west][inner sep=0.75pt]  [font=\scriptsize]  {$p_{_{5}}$};
\draw (135.01,111.19) node [anchor=north west][inner sep=0.75pt]  [font=\scriptsize]  {$p_{_{6}}$};
\draw (96.01,111.19) node [anchor=north west][inner sep=0.75pt]  [font=\scriptsize]  {$p_{7}$};
\draw (96.02,72.27) node [anchor=north west][inner sep=0.75pt]  [font=\scriptsize]  {$p_{_{8}}$};

\end{tikzpicture}
    \caption{Labeling of the 8-neighborhood points of  $p$.}
    \label{ejemplosimple}
\end{figure}

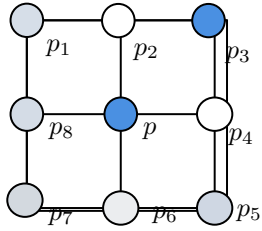
\begin{figure}[H]
    \centering

\tikzset{every picture/.style={line width=0.75pt}} 

\begin{tikzpicture}[x=0.60pt,y=0.60pt,yscale=-1,xscale=1]

\draw  [draw opacity=0][fill={rgb, 255:red, 255; green, 255; blue, 255 }  ,fill opacity=1 ] (53.06,20.88) -- (179.26,20.88) -- (179.26,141.11) -- (53.06,141.11) -- cycle ; \draw   (53.06,20.88) -- (53.06,141.11)(112.27,20.88) -- (112.27,141.11)(171.48,20.88) -- (171.48,141.11) ; \draw   (53.06,20.88) -- (179.26,20.88)(53.06,80.09) -- (179.26,80.09)(53.06,139.3) -- (179.26,139.3) ; \draw    ;
\draw  [color={rgb, 255:red, 0; green, 0; blue, 0 }  ,draw opacity=1 ][fill={rgb, 255:red, 207; green, 215; blue, 227 }  ,fill opacity=1 ][line width=0.75]  (160.35,139.3) .. controls (160.35,133.49) and (165.34,128.78) .. (171.48,128.78) .. controls (177.63,128.78) and (182.61,133.49) .. (182.61,139.3) .. controls (182.61,145.12) and (177.63,149.83) .. (171.48,149.83) .. controls (165.34,149.83) and (160.35,145.12) .. (160.35,139.3) -- cycle ;
\draw  [color={rgb, 255:red, 0; green, 0; blue, 0 }  ,draw opacity=1 ][fill={rgb, 255:red, 255; green, 255; blue, 255 }  ,fill opacity=1 ] (160.35,80.09) .. controls (160.35,74.28) and (165.34,69.57) .. (171.48,69.57) .. controls (177.63,69.57) and (182.61,74.28) .. (182.61,80.09) .. controls (182.61,85.9) and (177.63,90.62) .. (171.48,90.62) .. controls (165.34,90.62) and (160.35,85.9) .. (160.35,80.09) -- cycle ;
\draw  [fill={rgb, 255:red, 214; green, 222; blue, 232 }  ,fill opacity=1 ] (42.79,20.88) .. controls (42.79,15.07) and (47.39,10.36) .. (53.06,10.36) .. controls (58.73,10.36) and (63.33,15.07) .. (63.33,20.88) .. controls (63.33,26.69) and (58.73,31.4) .. (53.06,31.4) .. controls (47.39,31.4) and (42.79,26.69) .. (42.79,20.88) -- cycle ;
\draw  [fill={rgb, 255:red, 212; green, 220; blue, 230 }  ,fill opacity=1 ] (42.79,80.09) .. controls (42.79,74.28) and (47.39,69.57) .. (53.06,69.57) .. controls (58.73,69.57) and (63.33,74.28) .. (63.33,80.09) .. controls (63.33,85.9) and (58.73,90.61) .. (53.06,90.61) .. controls (47.39,90.61) and (42.79,85.9) .. (42.79,80.09) -- cycle ;
\draw  [fill={rgb, 255:red, 74; green, 144; blue, 226 }  ,fill opacity=1 ] (102,80.09) .. controls (102,74.28) and (106.6,69.57) .. (112.27,69.57) .. controls (117.94,69.57) and (122.54,74.28) .. (122.54,80.09) .. controls (122.54,85.9) and (117.94,90.61) .. (112.27,90.61) .. controls (106.6,90.61) and (102,85.9) .. (102,80.09) -- cycle ;
\draw  [fill={rgb, 255:red, 74; green, 144; blue, 226 }  ,fill opacity=1 ] (157.33,21.05) .. controls (157.33,15.24) and (161.93,10.53) .. (167.6,10.53) .. controls (173.27,10.53) and (177.87,15.24) .. (177.87,21.05) .. controls (177.87,26.86) and (173.27,31.57) .. (167.6,31.57) .. controls (161.93,31.57) and (157.33,26.86) .. (157.33,21.05) -- cycle ;
\draw  [fill={rgb, 255:red, 255; green, 255; blue, 255 }  ,fill opacity=1 ] (100.47,21.05) .. controls (100.47,15.24) and (105.07,10.53) .. (110.74,10.53) .. controls (116.41,10.53) and (121.01,15.24) .. (121.01,21.05) .. controls (121.01,26.86) and (116.41,31.57) .. (110.74,31.57) .. controls (105.07,31.57) and (100.47,26.86) .. (100.47,21.05) -- cycle ;
\draw  [color={rgb, 255:red, 0; green, 0; blue, 0 }  ,draw opacity=1 ][fill={rgb, 255:red, 235; green, 237; blue, 239 }  ,fill opacity=1 ] (101.14,139.3) .. controls (101.14,133.49) and (106.12,128.78) .. (112.27,128.78) .. controls (118.42,128.78) and (123.4,133.49) .. (123.4,139.3) .. controls (123.4,145.12) and (118.42,149.83) .. (112.27,149.83) .. controls (106.12,149.83) and (101.14,145.12) .. (101.14,139.3) -- cycle ;
\draw  [color={rgb, 255:red, 0; green, 0; blue, 0 }  ,draw opacity=1 ][fill={rgb, 255:red, 211; green, 217; blue, 223 }  ,fill opacity=1 ] (40.74,134.36) .. controls (40.74,128.55) and (45.73,123.84) .. (51.87,123.84) .. controls (58.02,123.84) and (63,128.55) .. (63,134.36) .. controls (63,140.18) and (58.02,144.89) .. (51.87,144.89) .. controls (45.73,144.89) and (40.74,140.18) .. (40.74,134.36) -- cycle ;

\draw (124.54,83.49) node [anchor=north west][inner sep=0.75pt]  [font=\footnotesize]  {$p$};
\draw (63.54,31.49) node [anchor=north west][inner sep=0.75pt]  [font=\footnotesize]  {$p_{1}$};
\draw (118.54,33.49) node [anchor=north west][inner sep=0.75pt]  [font=\footnotesize]  {$p_{2}$};
\draw (176.54,35.49) node [anchor=north west][inner sep=0.75pt]  [font=\footnotesize]  {$p_{3}$};
\draw (178.54,87.49) node [anchor=north west][inner sep=0.75pt]  [font=\footnotesize]  {$p_{4}$};
\draw (183.54,135.49) node [anchor=north west][inner sep=0.75pt]  [font=\footnotesize]  {$p_{5}$};
\draw (130.54,136.49) node [anchor=north west][inner sep=0.75pt]  [font=\footnotesize]  {$p_{6}$};
\draw (65,137.76) node [anchor=north west][inner sep=0.75pt]  [font=\footnotesize]  {$p_{7}$};
\draw (65.33,83.49) node [anchor=north west][inner sep=0.75pt]  [font=\footnotesize]  {$p_{8}$};

\end{tikzpicture}
    \caption{$p_2$ and $p_4$ are in different 4-components.}
    \label{pordondepasa}
\end{figure}
\begin{propo} 
\label{diferentescomponentes} 
Let $C$ be a closed 8-curve, $p \in C$ and $p_i \in C$ be a point 8-adjacent to $p$ that is not 4-adjacent to $p$ (i.e., $i \in \{1, 3, 5, 7\}$). Then $p_{i-1}$ and $p_{i+1}$ are in different 4-components of $\mathbb{Z}^2 \setminus C$ (where $p_{i-1}$ is $p_8$ if $i = 1$).
\end{propo}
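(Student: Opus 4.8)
The plan is to lift the combinatorial picture to $\mathbb{R}^2$ and run a parity-of-crossings argument based on the classical Jordan curve theorem. First I would dispose of a degenerate case: since $p_{i-1}$ and $p_{i+1}$ are $4$-adjacent (hence $8$-adjacent) to $p$, and $p_i\in C$ is a third $8$-neighbor of $p$, at most one of $p_{i-1},p_{i+1}$ can lie in $C$ (otherwise $p$ would have three $8$-neighbors in $C$), and if one of them does the statement is immediate. So assume $p_{i-1},p_{i+1}\in\mathbb{Z}^2\setminus C$. After translating $p$ to the origin and applying a symmetry of the lattice $\mathbb{Z}^2$ (which preserves $4$- and $8$-adjacency, closed curves, and components) we may assume $p=(0,0)$ and $p_i=(1,1)$; then $\{p_{i-1},p_{i+1}\}=\{(1,0),(0,1)\}$ and $S:=\{(0,0),(1,1),(1,0),(0,1)\}$ is a unit lattice square with $p,p_i\in C$ and $p_{i-1},p_{i+1}\notin C$.

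Next I would attach to $C$ the polygonal set $\widehat{C}\subseteq\mathbb{R}^2$ obtained by joining each pair of cyclically consecutive points of $C$ (in the cyclic order determined by $8$-adjacency on $C$) by the straight unit segment between them; each such segment is horizontal, vertical, or a diagonal of a unit cell. The crucial structural point is that $\widehat{C}$ is a simple closed polygonal curve: consecutive segments meet only at their shared endpoint; two non-consecutive segments have disjoint endpoint sets; an axis-parallel unit segment and a diagonal unit segment can meet only at a common lattice endpoint; and two diagonal unit segments can cross at an interior point only when their four endpoints are the four corners of one unit cell, which cannot happen here because the corner shared by both segments would then have three $8$-neighbors in $C$. (This is the one place where $2$-regularity of $8$-adjacency on a closed $8$-curve is used essentially; if $C$ is $8$-disconnected one simply works with the polygon of $\widehat{C}$ passing through $p$.)

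Then comes the local computation in the closed square $\overline{S}=[0,1]^2$. Since every segment of $\widehat{C}$ is a unit segment, the only ones that can meet the open square $(0,1)^2$ are the two diagonals of $S$; the anti-diagonal $[(1,0),(0,1)]$ is not a segment of $\widehat{C}$ because its endpoints are not in $C$, no edge of $\partial S$ is a segment of $\widehat{C}$ because each such edge has an endpoint not in $C$, and $\widehat{C}$ can meet $\partial S$ only at the lattice points $(0,0),(1,0),(1,1),(0,1)$, of which only $(0,0)$ and $(1,1)$ lie on $\widehat{C}$. Hence $\widehat{C}\cap\overline{S}=[(0,0),(1,1)]=[p,p_i]$. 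I would then pick an arc $\gamma$ from $p_{i+1}=(0,1)$ to $p_{i-1}=(1,0)$ lying in $\overline{S}$, avoiding the corners $(0,0)$ and $(1,1)$, and crossing the diagonal $[p,p_i]$ exactly once and transversally (a small perturbation of the anti-diagonal of $S$ does this). Then $\gamma$ meets $\widehat{C}$ in exactly one transversal point, so by the Jordan curve theorem (counting intersections mod $2$) the points $p_{i-1}$ and $p_{i+1}$ lie in different connected components of $\mathbb{R}^2\setminus\widehat{C}$.

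Finally I would transfer this back to $\mathbb{Z}^2$. If two lattice points $x,y\in\mathbb{Z}^2\setminus C$ were joined by a $4$-path inside $\mathbb{Z}^2\setminus C$, then joining its consecutive points by unit axis-parallel segments would produce a polygonal path in $\mathbb{R}^2$ from $x$ to $y$ disjoint from $\widehat{C}$: a lattice point on the path is not in $C$, hence not on $\widehat{C}$, and the interior of a unit axis-parallel segment whose endpoints are both outside $C$ cannot meet any segment of $\widehat{C}$, by the same unit-segment case analysis as above. Hence $4$-connectedness in $\mathbb{Z}^2\setminus C$ forces membership in the same component of $\mathbb{R}^2\setminus\widehat{C}$; applying this contrapositively with $x=p_{i-1}$ and $y=p_{i+1}$ shows that $p_{i-1}$ and $p_{i+1}$ lie in different $4$-components of $\mathbb{Z}^2\setminus C$. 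The single genuinely delicate step is checking that $\widehat{C}$ is simple; everything else is careful but routine bookkeeping about which unit segments can intersect which.
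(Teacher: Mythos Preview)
Your argument is correct, but it is a genuinely different proof from the one in the paper. The paper stays entirely inside $\mathbb{Z}^2$ and invokes Rosenfeld's Jordan curve theorem (Theorem~\ref{Jordank}) directly: knowing that $\mathbb{Z}^2\setminus C$ has exactly two $4$-components $A$ and $B$ and that every point of $C$ has a $4$-neighbor in each, the paper assumes (after symmetry) $p_i=p_3$, places $p_4$ in $A$, and then does a short case split on which of $p_1,p_5,p_6,p_7,p_8$ is the second $8$-neighbor of $p$ in $C$. In each case a small $4$-connected block of the $8$-neighborhood of $p$ misses $C$ and contains both $p_2$ and a point forced into $B$ by part~(2) of Theorem~\ref{Jordank}, so $p_2\in B$. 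Your approach instead lifts $C$ to a polygonal simple closed curve $\widehat{C}\subseteq\mathbb{R}^2$ and appeals to the classical Jordan curve theorem via a parity-of-crossings computation localized to the unit cell containing $p$ and $p_i$.

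What each buys: the paper's proof is a few lines of combinatorics once Rosenfeld's theorem is available, and it is fully in keeping with the paper's program of deriving facts about $\mathbb{K}^2$ from Theorem~\ref{Jordank}. Your proof is logically independent of Rosenfeld's theorem and gives a clean geometric picture, at the cost of invoking the continuous Jordan curve theorem and carefully verifying that $\widehat{C}$ is simple (your check that two diagonals crossing in a cell would force a vertex with three $8$-neighbors in $C$ is the right observation there). One small remark: your handling of the ``degenerate'' case is unnecessary here, since if either $p_{i-1}$ or $p_{i+1}$ were in $C$ then $\{p,p_{i-1}\text{ or }p_{i+1},p_i\}$ would already be a $3$-cycle forcing $|C|=3$, contrary to the standing hypothesis $|C|>4$ needed to invoke Theorem~\ref{Jordank}; so in the paper's setting both points automatically lie in $\mathbb{Z}^2\setminus C$.
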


\begin{proof}
We will assume, without loss of generality, that $p_i= p_3$. We will show that $p_2$ and $p_4$ are in different 4-components of $\mathbb{Z}^2 \setminus C$. By Theorem \ref{Jordank}, we know that $\mathbb{Z}^2 \setminus C$ has two 4-components, which we will denote as $A$ and $B$. Observe that $p_2$ and $p_4$ are not in $C$. We will assume that $p_4 \in A$ and show that $p_2 \in B$. Since $C$ is an closed 8-curve, it must pass through one of the following points: $p_1, p_8, p_7, p_6, p_5$ (see Figure \ref{pordondepasa}). We consider the following cases:

\medskip 

\noindent\textbf{Case 1:} $C$ passes through $p_1$ or $p_5$. Without loss of generality, suppose that $p_5 \in C$, as the reasoning is analogous for $p_1$. Since $p_4 \in A$, by part 2 of Theorem \ref{Jordank} (applied at $p$), we have that $p_j \in B$ for some $j \in \{2, 6, 8\}$. Let $D = \{p_1, p_2, p_6, p_7, p_8\}$. Note that $D \cap C = \emptyset$ (since $C$ is an closed 8-curve). Furthermore, $D = \{p_1, p_2, p_6, p_7, p_8\}$ is 4-connected, and since $D \cap B \neq \emptyset$, it follows that $D \subseteq B$. In particular, $p_2 \in B$.

\medskip

\noindent\textbf{Case 2:} Suppose that $C$ passes through $p_6$. Let $D = \{p_1, p_2, p_8\}$. Reasoning as in Case 1, we conclude that $D \subseteq B$.

\medskip

\noindent\textbf{Case 3:} Suppose that $C$ passes through $p_8$ or $p_7$. Using similar reasoning to the previous cases, we conclude that $p_4 \in A$ and $p_2 \in B$. \end{proof}

\section{ The topological digital plane}
\label{SeccionTopK}

A topological space $X$ is called {\bf Alexandroff} if the intersection of any collection of open sets is open. Clearly, every finite topological space is Alexandroff. In Alexandroff spaces, every point has a minimal open neighborhood, denoted by $N(x)$. The closure of the set $\{x\}$ is denoted by $cl(x)$ and we have that $y \in N(x)$ if and only if $x \in cl(y)$. A general reference on Alexandroff spaces is \cite[chapter 8]{Richmond}.

As mentioned in the introduction, the {\bf digital topology} (or \textbf{Khalimsky} topology) on $\mathbb{Z}$ is generated by the following sets:

$$
N( n)=
\begin{cases}\{n\} & \text { If } n \text { is odd. } \\ \{n-1,n,n+1 \} & \text { If } n \text { is even.}
\end{cases}
$$
We denote this topological space by the symbol $\mathbb{K}$. It is immediate that $\mathbb{K}$ is an Alexandroff space, since $N(n)$ is the minimal open neighborhood of $n$. Furthermore, $\mathbb{K}$ is a connected space. The pioneering article \cite{Khalimsky} serves as a general reference for the basic properties of the space $\mathbb{K}$.

The {\bf (topological)  digital plane} is $\mathbb{K}^2$ with the product topology. The points of $\mathbb{K}^2$ are classified into two types: a point $(x, y) \in \mathbb{K}^2$ is \textbf{pure} if both $x$ and $y$ are either closed points (i.e., even integers) or open points (i.e., odd integers) of $\mathbb{K}$; otherwise, we say that $(x, y)$ is \textbf{mixed}.

Connectivity in Alexandroff spaces is based on the concept of adjacent points, which we define as follows.

\begin{defi}
\label{defi_adyacencia}
Let $X$ be an Alexandroff space. The adjacency of a point $x \in X$ is the set
$$
A(x)=\{y\in X:y\neq x, \{x,y\} \textrm{ is connected} \}.
$$
\end{defi}

The following result characterizes the intervals in the digital line (see \cite[Theorem 3.2]{Khalimsky}).

\begin{teo} 
\label{arcosycaminos}
\begin{enumerate} 
\item If $X$ is a finite interval of the digital line, then there exist distinct points $x$ and $y$ such that $|A(y)| = |A(x)| = 1$ and $|A(w)| = 2$ for $w \in X \setminus \{x, y\}$.

\item If $X$ is a finite, connected topological space that contains distinct points $x$ and $y$ such that $|A(y)| = |A(x)| = 1$ and $|A(w)| = 2$ for $w \in X \setminus\{x, y\}$, then $X$ is homeomorphic to an interval of the digital line. \end{enumerate} \end{teo}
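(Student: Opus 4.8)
The plan is to prove the two implications separately; part (1) is a short computation, so I begin with it. First I would record the basic fact about $\mathbb{K}$: for integers $m<m'$ the subspace $\{m,m'\}$ is connected iff $m'=m+1$, because $m'\in N(m)$ forces $m'\le m+1$ and $m\in N(m')$ forces $m\ge m'-1$, so the subspace is discrete when $m'-m\ge 2$, whereas $\{m,m+1\}$ is the Sierpi\'nski space (exactly one of $m,m+1$ is even) and hence connected. Consequently, for a finite interval $X=\{a,a+1,\dots,b\}$ with the subspace topology one has $A(n)=\{n-1,n+1\}\cap X$ for every $n\in X$, so the two endpoints $a,b$ have a single adjacent point and every other point has two, which is precisely the asserted configuration (read with $|X|\ge 2$, since a one-point interval has no two distinct such points).

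For part (2), let $X$ be finite and connected with distinct points $x,y$ of adjacency $1$ and every other point of adjacency $2$. I would build a path-enumeration of $X$ greedily: put $x_0=x$, let $x_1$ be the unique element of $A(x_0)$, and, given $x_0,\dots,x_i$ with $x_i\ne y$, use that $|A(x_i)|=2$ and (adjacency is symmetric) $x_{i-1}\in A(x_i)$ to let $x_{i+1}$ be the other element of $A(x_i)$. The first claim is that the $x_i$ are pairwise distinct: if $j$ is least with $x_j=x_k$ for some $k<j$, then inspecting $A(x_j)=A(x_k)$ — equal to $\{x_1\}$, to $\{x_{k-1},x_{k+1}\}$, or to the adjacency of $y$ according to the nature of $x_k$ — forces $x_{j-1}$ to coincide with a point of $\{x_0,\dots,x_{j-2}\}$, contradicting minimality, the only remaining possibility $j=k+2$ being excluded because $x_{k+2}$ was chosen to differ from $x_k$. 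Since $X$ is finite the process halts, necessarily at $x_n=y$, producing $x_0,\dots,x_n$ with $x_i\in A(x_j)\iff|i-j|=1$. The second claim is that $P:=\{x_0,\dots,x_n\}$ equals $X$: no point outside $P$ lies in any $A(x_i)$ (since $A(x_i)\subseteq P$ by construction), and in an Alexandroff space $z\in N(w)$ or $w\in N(z)$ implies $z\in A(w)$, so $N(x_i)\subseteq\{x_i\}\cup A(x_i)\subseteq P$ and likewise $cl(x_i)\subseteq P$; hence $P=\bigcup_iN(x_i)$ is open and $P=\bigcup_i cl(x_i)$ is closed, and connectedness of $X$ forces $P=X$.

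It remains to recover the topology. I would first show that for $|X|\ge 3$ no two points of $X$ are topologically indistinguishable: $N(x_i)=N(x_j)$ gives $x_i\in N(x_j)$ and $x_j\in N(x_i)$, hence $x_i\in A(x_j)$ and $|i-j|=1$, and then the adjacency of $x_{i-1}$ (or, if $i=0$, of $x_{i+2}$) to $x_i$ yields an element of $A(x_{i+1})$ (resp.\ $A(x_{i-1})$) different from both of its permitted neighbors, a contradiction. The same kind of argument rules out a lopsided minimal neighborhood $N(x_i)=\{x_i,x_{i+1}\}$ with $x_{i-1}\notin N(x_i)$ at an interior $x_i$ (and its mirror image): then $x_i\in N(x_{i-1})$, so $N(x_i)\subseteq N(x_{i-1})$ and $x_{i+1}\in A(x_{i-1})$, which is impossible. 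Hence every $x_i$ is either \emph{open-type}, $N(x_i)=\{x_i\}$, or \emph{closed-type}, $N(x_i)=\{x_{i-1},x_i,x_{i+1}\}\cap X$, and consecutive $x_i,x_{i+1}$ must have opposite types, since two consecutive open-type points are not adjacent and two consecutive closed-type points are topologically indistinguishable. So the types strictly alternate along the path, and the bijection $f$ sending $x_i$ to $i$ when $x_0$ is closed-type, and to $i+1$ when $x_0$ is open-type, maps $X$ onto a finite interval of $\mathbb{K}$ so that closed-type corresponds to even and open-type to odd; checking $f(N(x_i))=N(f(x_i))\cap f(X)$ in the few cases (interior vs.\ endpoint, open vs.\ closed) shows $f$ is a homeomorphism. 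The cases $|X|\le 2$ are checked by hand, the only delicate point being the indiscrete two-point space, which is excluded by the $T_0$ hypothesis implicit in this setting (and automatic once $|X|\ge 3$, as just shown).

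I expect the bookkeeping of part (2) to be the main obstacle: verifying that the greedy path neither repeats a vertex nor stalls, that it exhausts $X$, and — most delicately — establishing the topological rigidity, i.e.\ that every minimal neighborhood has one of the two digital shapes and that the two shapes alternate along the path. Once that rigidity is in place, producing and checking the homeomorphism is mechanical.
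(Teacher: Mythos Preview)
The paper does not prove this theorem; it is quoted from \cite[Theorem~3.2]{Khalimsky} without argument, so there is no ``paper's proof'' to compare against. Your proof is a self-contained direct argument, and it is correct. Part~(1) is a routine computation; in part~(2) your greedy path construction, the distinctness argument, the clopen-set exhaustion, and the alternation of open/closed types are all sound.

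One remark worth highlighting: you correctly notice that the statement, taken literally, fails for the indiscrete two-point space (it satisfies the adjacency hypotheses but is not $T_0$, hence not homeomorphic to any interval of $\mathbb{K}$). Your observation that the $T_0$ property is automatic once $|X|\ge 3$ is exactly the right way to handle this, and flagging the need for an ambient $T_0$ assumption when $|X|=2$ is a genuine clarification of the quoted result rather than a defect in your argument.
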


The following characterization of adjacency in an Alexandroff space is easy to verify (see \cite[Theorem 3.2]{Khalimsky}).

\begin{propo} \label{arcosycaminos1} Let $X$ be an Alexandroff space. For every $x \in X$, the following holds:
$$
A(x)\cup\{x\}=cl(x)\cup N(x).
$$
\end{propo}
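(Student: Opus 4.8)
The plan is to reduce the identity to a single elementary observation about two-point subspaces and then finish by a purely set-theoretic rewriting. Concretely, I would first establish the following lemma, which is where the Alexandroff hypothesis is actually used: for distinct points $x, y \in X$, the subspace $\{x, y\}$ is connected if and only if $y \in N(x)$ or $x \in N(y)$. A two-point space is disconnected precisely when it is discrete, i.e.\ when both $\{x\}$ and $\{y\}$ are open in the subspace topology. If $y \in N(x)$, then every open $U \subseteq X$ with $x \in U$ satisfies $N(x) \subseteq U$, hence $y \in U$; so $\{x\}$ is not open in $\{x,y\}$ and the subspace is connected. The case $x \in N(y)$ is symmetric. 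For the converse, if $y \notin N(x)$ and $x \notin N(y)$, then $N(x) \cap \{x,y\} = \{x\}$ and $N(y) \cap \{x,y\} = \{y\}$ are open in the subspace and separate it. (Here I use that the minimal open neighbourhood $N(x)$ of $x$ in $X$ restricts, after intersection with $\{x,y\}$, to the minimal open neighbourhood of $x$ in the subspace.)

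Next I would rewrite both sides of the claimed equality as subsets of $X$ cut out by the same condition. On one hand, $N(x) = \{y \in X : y \in N(x)\}$ trivially, and by the equivalence recalled in Section~\ref{SeccionTopK} (namely $y \in N(x)$ if and only if $x \in cl(y)$) we have $cl(x) = \{y \in X : x \in N(y)\}$; therefore
$$
cl(x) \cup N(x) = \{y \in X : y \in N(x) \text{ or } x \in N(y)\}.
$$
On the other hand, by the lemma, $A(x) = \{y \in X : y \neq x \text{ and } (y \in N(x) \text{ or } x \in N(y))\}$, and since $x \in N(x)$, adjoining $x$ gives
$$
A(x) \cup \{x\} = \{y \in X : y \in N(x) \text{ or } x \in N(y)\}.
$$
The right-hand sides agree, which proves the proposition.

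The only genuinely topological input is the two-point lemma, and its only delicate point is the bookkeeping about the subspace topology on $\{x,y\}$: that a two-point space is disconnected exactly when both singletons are open, and that the minimal open sets of $X$ restrict to minimal open sets of the subspace. Both are immediate consequences of the Alexandroff property, so I do not expect any serious obstacle; the argument is short once the lemma is isolated.
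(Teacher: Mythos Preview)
Your proof is correct. The paper does not actually supply a proof of this proposition; it simply remarks that the characterization is ``easy to verify'' and cites \cite[Theorem 3.2]{Khalimsky}. Your argument fills in exactly the details one would expect: the two-point lemma is the natural unpacking of the definition of $A(x)$ in an Alexandroff space, and the set-theoretic rewriting via the equivalence $y \in N(x) \iff x \in cl(y)$ is the standard reduction. There is nothing to compare at the level of approach, since the paper omits the argument entirely.
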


In the particular case of the digital plane $\mathbb{K}^2$, the adjacency of points is as follows (see \cite[Lemma 4.2]{Khalimsky}). If $(x,y)$ is a pure point, then
\[
A(x,y)=\{x-1,x, x+1\}\times \{y-1,y, y+1\}\setminus\{(x,y)\}.
\]
If $(x,y)$ is a mixed point, then
\[
A(x,y)=(\{x-1,x, x+1\}\times \{y\})\;\cup \; (\{x\}\times \{y-1,y, y+1\}) \setminus\{(x,y)\}.
\]
Figure \ref{adyacencias_plano} illustrates the adjacencies in $\mathbb{K}^2$ (where the connecting segments represent adjacencies).

 \begin{figure}[H]
     \centering

\tikzset{every picture/.style={line width=0.75pt}} 

\begin{tikzpicture}[x=0.75pt,y=0.75pt,yscale=-1,xscale=1]

\draw [line width=1.5]    (311.24,91.1) -- (310.42,126.77) ;
\draw   (253.5,56.14) .. controls (253.5,51.99) and (257.09,48.63) .. (261.51,48.63) .. controls (265.93,48.63) and (269.51,51.99) .. (269.51,56.14) .. controls (269.51,60.29) and (265.93,63.66) .. (261.51,63.66) .. controls (257.09,63.66) and (253.5,60.29) .. (253.5,56.14) -- cycle ;
\draw  [fill={rgb, 255:red, 0; green, 0; blue, 0 }  ,fill opacity=1 ] (183.51,80.04) -- (186.59,87.77) -- (193.5,89.01) -- (188.5,95.03) -- (189.68,103.52) -- (183.51,99.51) -- (177.33,103.52) -- (178.51,95.03) -- (173.51,89.01) -- (180.42,87.77) -- cycle ;
\draw  [fill={rgb, 255:red, 0; green, 0; blue, 0 }  ,fill opacity=1 ] (222.51,41.04) -- (225.59,48.77) -- (232.5,50.01) -- (227.5,56.03) -- (228.68,64.52) -- (222.51,60.51) -- (216.33,64.52) -- (217.51,56.03) -- (212.51,50.01) -- (219.42,48.77) -- cycle ;
\draw  [fill={rgb, 255:red, 0; green, 0; blue, 0 }  ,fill opacity=1 ] (261.01,77.19) -- (264.09,84.92) -- (271,86.16) -- (266,92.18) -- (267.18,100.67) -- (261.01,96.66) -- (254.83,100.67) -- (256.01,92.18) -- (251.02,86.16) -- (257.92,84.92) -- cycle ;
\draw  [fill={rgb, 255:red, 0; green, 0; blue, 0 }  ,fill opacity=1 ] (224.51,120.04) -- (227.59,127.77) -- (234.5,129.01) -- (229.5,135.03) -- (230.68,143.52) -- (224.51,139.51) -- (218.33,143.52) -- (219.51,135.03) -- (214.51,129.01) -- (221.42,127.77) -- cycle ;
\draw [color={rgb, 255:red, 74; green, 144; blue, 226 }  ,draw opacity=1 ][line width=1.5]    (222.51,60.51) -- (223.01,84.18) ;
\draw [color={rgb, 255:red, 74; green, 144; blue, 226 }  ,draw opacity=1 ][line width=1.5]    (224,96.38) -- (224.51,120.04) ;
\draw [color={rgb, 255:red, 74; green, 144; blue, 226 }  ,draw opacity=1 ][line width=1.5]    (231.02,91.69) -- (256.01,92.18) ;
\draw [color={rgb, 255:red, 74; green, 144; blue, 226 }  ,draw opacity=1 ][line width=1.5]    (188.5,95.03) -- (213.5,95.51) ;
\draw [color={rgb, 255:red, 74; green, 144; blue, 226 }  ,draw opacity=1 ][line width=1.5]    (229.01,97.18) -- (255,130) ;
\draw [color={rgb, 255:red, 74; green, 144; blue, 226 }  ,draw opacity=1 ][line width=1.5]    (189.26,61.55) -- (216.01,85.18) ;
\draw [color={rgb, 255:red, 74; green, 144; blue, 226 }  ,draw opacity=1 ][line width=1.5]    (191.01,129) -- (217.01,99.18) ;
\draw [color={rgb, 255:red, 74; green, 144; blue, 226 }  ,draw opacity=1 ][line width=1.5]    (233.02,85.18) -- (257.26,60.8) ;
\draw [line width=1.5]    (189.01,135.03) -- (253.01,135.18) ;
\draw [line width=1.5]    (261.51,63.66) -- (260.51,129.66) ;
\draw [line width=1.5]    (183.01,62.18) -- (182.01,128.18) ;
\draw [line width=1.5]    (191.01,53.61) -- (254,54.43) ;
\draw   (345.32,44.32) -- (362.33,44.32) -- (362.33,56.35) -- (345.32,56.35) -- cycle ;
\draw  [fill={rgb, 255:red, 255; green, 255; blue, 255 }  ,fill opacity=1 ] (303.24,91.1) .. controls (303.24,86.95) and (306.82,83.58) .. (311.24,83.58) .. controls (315.66,83.58) and (319.25,86.95) .. (319.25,91.1) .. controls (319.25,95.25) and (315.66,98.62) .. (311.24,98.62) .. controls (306.82,98.62) and (303.24,95.25) .. (303.24,91.1) -- cycle ;
\draw   (348.99,126.69) -- (366,126.69) -- (366,138.72) -- (348.99,138.72) -- cycle ;
\draw   (385.33,90.84) .. controls (385.33,86.68) and (388.91,83.32) .. (393.33,83.32) .. controls (397.75,83.32) and (401.34,86.68) .. (401.34,90.84) .. controls (401.34,94.99) and (397.75,98.35) .. (393.33,98.35) .. controls (388.91,98.35) and (385.33,94.99) .. (385.33,90.84) -- cycle ;
\draw  [fill={rgb, 255:red, 0; green, 0; blue, 0 }  ,fill opacity=1 ] (312.33,40.38) -- (315.42,48.1) -- (322.32,49.34) -- (317.33,55.36) -- (318.51,63.85) -- (312.33,59.84) -- (306.16,63.85) -- (307.34,55.36) -- (302.34,49.34) -- (309.25,48.1) -- cycle ;
\draw  [fill={rgb, 255:red, 0; green, 0; blue, 0 }  ,fill opacity=1 ] (394.33,39.38) -- (397.42,47.1) -- (404.32,48.34) -- (399.33,54.36) -- (400.51,62.85) -- (394.33,58.84) -- (388.16,62.85) -- (389.34,54.36) -- (384.34,48.34) -- (391.25,47.1) -- cycle ;
\draw  [fill={rgb, 255:red, 0; green, 0; blue, 0 }  ,fill opacity=1 ] (354,75.7) -- (357.08,83.43) -- (363.99,84.67) -- (358.99,90.69) -- (360.17,99.18) -- (354,95.17) -- (347.82,99.18) -- (349,90.69) -- (344,84.67) -- (350.91,83.43) -- cycle ;
\draw  [fill={rgb, 255:red, 0; green, 0; blue, 0 }  ,fill opacity=1 ] (313.51,119.04) -- (316.59,126.77) -- (323.5,128.01) -- (318.5,134.03) -- (319.68,142.52) -- (313.51,138.51) -- (307.33,142.52) -- (308.51,134.03) -- (303.51,128.01) -- (310.42,126.77) -- cycle ;
\draw  [fill={rgb, 255:red, 0; green, 0; blue, 0 }  ,fill opacity=1 ] (392.51,121.05) -- (395.6,128.78) -- (402.5,130.02) -- (397.51,136.03) -- (398.69,144.53) -- (392.51,140.52) -- (386.33,144.53) -- (387.51,136.03) -- (382.52,130.02) -- (389.42,128.78) -- cycle ;
\draw [color={rgb, 255:red, 74; green, 144; blue, 226 }  ,draw opacity=1 ][line width=1.5]    (322.01,91.21) -- (349,90.69) ;
\draw [color={rgb, 255:red, 74; green, 144; blue, 226 }  ,draw opacity=1 ][line width=1.5]    (354,95.17) -- (354,121.69) ;
\draw [color={rgb, 255:red, 74; green, 144; blue, 226 }  ,draw opacity=1 ][line width=1.5]    (358.99,90.69) -- (384.33,89.84) ;
\draw [color={rgb, 255:red, 74; green, 144; blue, 226 }  ,draw opacity=1 ][line width=1.5]    (354,56.69) -- (354,75.7) ;
\draw [line width=1.5]    (367,133.69) -- (392.51,134.03) ;
\draw [line width=1.5]    (319.5,135.03) -- (350.01,134.37) ;
\draw [line width=1.5]    (361,50.69) -- (393.33,51.35) ;
\draw [line width=1.5]    (320,50.69) -- (345.51,51.03) ;
\draw [line width=1.5]    (394.33,52.35) -- (393.33,83.32) ;
\draw [line width=1.5]    (393.33,98.35) -- (392.51,134.03) ;
\draw [line width=1.5]    (312.33,53.35) -- (312.33,84.35) ;
\draw   (32,54) -- (49.01,54) -- (49.01,66.04) -- (32,66.04) -- cycle ;
\draw   (111,54) -- (128.01,54) -- (128.01,66.04) -- (111,66.04) -- cycle ;
\draw   (73.01,94.69) .. controls (73.01,90.54) and (76.59,87.18) .. (81.01,87.18) .. controls (85.43,87.18) and (89.02,90.54) .. (89.02,94.69) .. controls (89.02,98.85) and (85.43,102.21) .. (81.01,102.21) .. controls (76.59,102.21) and (73.01,98.85) .. (73.01,94.69) -- cycle ;
\draw   (32,132) -- (49.01,132) -- (49.01,144.04) -- (32,144.04) -- cycle ;
\draw   (113,133) -- (130.01,133) -- (130.01,145.04) -- (113,145.04) -- cycle ;
\draw  [fill={rgb, 255:red, 0; green, 0; blue, 0 }  ,fill opacity=1 ] (41.51,83.04) -- (44.59,90.77) -- (51.5,92.01) -- (46.5,98.03) -- (47.68,106.52) -- (41.51,102.51) -- (35.33,106.52) -- (36.51,98.03) -- (31.51,92.01) -- (38.42,90.77) -- cycle ;
\draw  [fill={rgb, 255:red, 0; green, 0; blue, 0 }  ,fill opacity=1 ] (80.51,44.04) -- (83.59,51.77) -- (90.5,53.01) -- (85.5,59.03) -- (86.68,67.52) -- (80.51,63.51) -- (74.33,67.52) -- (75.51,59.03) -- (70.51,53.01) -- (77.42,51.77) -- cycle ;
\draw  [fill={rgb, 255:red, 0; green, 0; blue, 0 }  ,fill opacity=1 ] (119.01,80.19) -- (122.09,87.92) -- (129,89.16) -- (124,95.18) -- (125.18,103.67) -- (119.01,99.66) -- (112.83,103.67) -- (114.01,95.18) -- (109.02,89.16) -- (115.92,87.92) -- cycle ;
\draw  [fill={rgb, 255:red, 0; green, 0; blue, 0 }  ,fill opacity=1 ] (82.51,123.04) -- (85.59,130.77) -- (92.5,132.01) -- (87.5,138.03) -- (88.68,146.52) -- (82.51,142.51) -- (76.33,146.52) -- (77.51,138.03) -- (72.51,132.01) -- (79.42,130.77) -- cycle ;
\draw [color={rgb, 255:red, 74; green, 144; blue, 226 }  ,draw opacity=1 ][line width=1.5]    (80.51,63.51) -- (81.01,87.18) ;
\draw [color={rgb, 255:red, 74; green, 144; blue, 226 }  ,draw opacity=1 ][line width=1.5]    (82,99.38) -- (82.51,123.04) ;
\draw [color={rgb, 255:red, 74; green, 144; blue, 226 }  ,draw opacity=1 ][line width=1.5]    (89.02,94.69) -- (114.01,95.18) ;
\draw [color={rgb, 255:red, 74; green, 144; blue, 226 }  ,draw opacity=1 ][line width=1.5]    (46.5,98.03) -- (71.5,98.51) ;
\draw [color={rgb, 255:red, 74; green, 144; blue, 226 }  ,draw opacity=1 ][line width=1.5]    (87.01,100.18) -- (113,133) ;
\draw [color={rgb, 255:red, 74; green, 144; blue, 226 }  ,draw opacity=1 ][line width=1.5]    (49.01,66.04) -- (74.01,88.18) ;
\draw [color={rgb, 255:red, 74; green, 144; blue, 226 }  ,draw opacity=1 ][line width=1.5]    (49.01,132) -- (75.01,102.18) ;
\draw [color={rgb, 255:red, 74; green, 144; blue, 226 }  ,draw opacity=1 ][line width=1.5]    (86.01,90) -- (111,66.04) ;
\draw [line width=1.5]    (47.01,138.03) -- (111.01,138.18) ;
\draw [line width=1.5]    (119.51,66.66) -- (118.51,132.66) ;
\draw [line width=1.5]    (41.01,65.18) -- (40.01,131.18) ;
\draw [line width=1.5]    (49.01,56.61) -- (112,57.43) ;
\draw   (216.01,85.18) -- (233.02,85.18) -- (233.02,97.21) -- (216.01,97.21) -- cycle ;
\draw   (252.5,137.18) .. controls (252.5,133.03) and (256.09,129.66) .. (260.51,129.66) .. controls (264.93,129.66) and (268.51,133.03) .. (268.51,137.18) .. controls (268.51,141.33) and (264.93,144.7) .. (260.51,144.7) .. controls (256.09,144.7) and (252.5,141.33) .. (252.5,137.18) -- cycle ;
\draw   (174.01,136.11) .. controls (174.01,132.18) and (177.81,129) .. (182.51,129) .. controls (187.2,129) and (191.01,132.18) .. (191.01,136.11) .. controls (191.01,140.03) and (187.2,143.21) .. (182.51,143.21) .. controls (177.81,143.21) and (174.01,140.03) .. (174.01,136.11) -- cycle ;
\draw   (175.01,54.66) .. controls (175.01,50.51) and (178.59,47.14) .. (183.01,47.14) .. controls (187.43,47.14) and (191.02,50.51) .. (191.02,54.66) .. controls (191.02,58.81) and (187.43,62.18) .. (183.01,62.18) .. controls (178.59,62.18) and (175.01,58.81) .. (175.01,54.66) -- cycle ;
\draw   (72.26,155.8) .. controls (72.29,160.47) and (74.63,162.78) .. (79.3,162.75) -- (144.3,162.36) .. controls (150.97,162.32) and (154.31,164.63) .. (154.34,169.3) .. controls (154.31,164.63) and (157.63,162.28) .. (164.3,162.23)(161.3,162.25) -- (229.3,161.84) .. controls (233.97,161.81) and (236.29,159.46) .. (236.26,154.8) ;
\draw   (306.26,157.8) .. controls (306.31,162.47) and (308.67,164.77) .. (313.34,164.71) -- (338.34,164.42) .. controls (345.01,164.34) and (348.37,166.63) .. (348.42,171.3) .. controls (348.37,166.63) and (351.67,164.26) .. (358.34,164.18)(355.34,164.21) -- (383.34,163.88) .. controls (388.01,163.83) and (390.31,161.47) .. (390.26,156.8) ;

\draw (108,173) node [anchor=north west][inner sep=0.75pt]   [align=left] {{\footnotesize Pure points}};
\draw (315,174) node [anchor=north west][inner sep=0.75pt]   [align=left] {{\footnotesize Mixed points}};

\end{tikzpicture}
\caption{Adjacency in the digital plane  $\mathbb{K}^2$.}
     \label{adyacencias_plano}
 \end{figure}
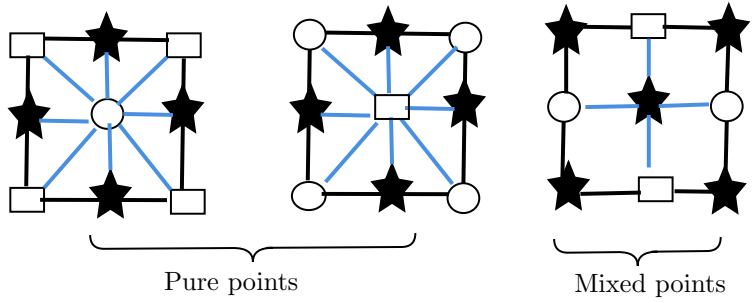
From Figure \ref{adyacencias_plano}, the following proposition is obtained.

\begin{propo} \label{dospuntosmixtos} 
Let $a, b \in \mathbb{K}^2$ be pure points. Then $a$ and $b$ share exactly two adjacent mixed points if and only if $a$ and $b$ are adjacent. 
\end{propo}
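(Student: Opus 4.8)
The plan is to reduce everything to the explicit descriptions of $A(x)$ recalled just before the statement, together with the fact (Definition \ref{defi_adyacencia}) that adjacency is symmetric; thus $m$ is a common adjacent mixed point of $a$ and $b$ exactly when $m$ is mixed and both $a,b\in A(m)$. I would first record two parity observations. First, if $m=(m_1,m_2)$ is mixed then $A(m)=\{(m_1-1,m_2),(m_1+1,m_2),(m_1,m_2-1),(m_1,m_2+1)\}$, and each of these four points is pure (changing one coordinate of a mixed point by $1$ makes both coordinates have the same parity). Second, two distinct pure points $a=(a_1,a_2)$ and $b=(b_1,b_2)$ are adjacent if and only if $|a_1-b_1|=|a_2-b_2|=1$: from the formula for $A(a)$, adjacency means $|a_1-b_1|\le 1$ and $|a_2-b_2|\le 1$ with $a\neq b$, and if one of these differences were $0$ the other would be $1$, forcing $b$ to have coordinates of different parity, contradicting that $b$ is pure.

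For the forward direction, suppose $a,b$ are pure and adjacent; by the second observation they differ by $(\pm1,\pm1)$. I would then exhibit the points $m=(b_1,a_2)$ and $m'=(a_1,b_2)$: using the first observation each is mixed (its coordinates are inherited one from $a$ and one from $b$, whose differing coordinate has the opposite parity), and a direct check against $A(m)$, $A(m')$ shows $a,b\in A(m)$ and $a,b\in A(m')$; moreover $m\neq m'$ because $a\neq b$. Conversely, if $m$ is mixed with $a,b\in A(m)$, then $a,b$ are two distinct elements of the four-element set $A(m)$ above; if both lay on the horizontal pair $\{(m_1-1,m_2),(m_1+1,m_2)\}$ then $a_2=b_2$, and if both lay on the vertical pair $\{(m_1,m_2-1),(m_1,m_2+1)\}$ then $a_1=b_1$, each contradicting the second observation; hence one is horizontal and one vertical, which forces $m$ to equal $(b_1,a_2)$ or $(a_1,b_2)$. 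So there are exactly two common adjacent mixed points.

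For the converse I argue the contrapositive: let $a,b$ be pure and not adjacent. If $a=b$, the common adjacent mixed points are exactly the mixed members of $A(a)$, namely the four points $(a_1\pm1,a_2),(a_1,a_2\pm1)$ (the four diagonal members of $A(a)$ are pure), so the count is four, not two. If $a\neq b$ and $a,b$ are not adjacent, then for any mixed $m$ with $a,b\in A(m)$ the trichotomy of the previous paragraph applies, but now the ``one horizontal, one vertical'' case would give $|a_1-b_1|=|a_2-b_2|=1$, i.e.\ adjacency, and is excluded; so $a,b$ must be the horizontal pair or the vertical pair of $A(m)$, which in either case determines $m$ uniquely as their midpoint. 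Hence there is at most one common adjacent mixed point, and again the count is not two. Combining the two directions yields the claimed equivalence.

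This is fundamentally a bookkeeping argument, so the only real obstacle is staying organized: one must keep the parity constraints in front of the computation (they are what make pure/pure adjacency purely diagonal, and what guarantee the two exhibited points are genuinely mixed and genuinely distinct), and one must verify that the trichotomy on pairs of points of $A(m)$ is exhaustive. I would also resist arguing ``without loss of generality'' by translating $a$ to the origin, since translation by an odd vector is not a homeomorphism of $\mathbb{K}^2$; working directly in coordinates avoids this pitfall, and the degenerate case $a=b$ (not explicitly excluded in the statement) should be handled for completeness.
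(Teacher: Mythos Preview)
Your argument is correct. The paper does not actually give a proof of this proposition: it simply states that the result ``is obtained'' from Figure~\ref{adyacencias_plano}, i.e.\ it is left as an observation read off the picture of adjacencies in $\mathbb{K}^2$. What you have written is a genuine coordinate proof of that observation, using exactly the explicit formulas for $A(x)$ that the paper records just before the statement. Your two parity remarks (that the four neighbours of a mixed point are all pure, and that adjacency between two distinct pure points forces both coordinate differences to equal $1$) are the precise content of the figure, and your trichotomy on pairs of points of $A(m)$ is the clean way to turn the picture into a case analysis. The handling of the degenerate case $a=b$ is a nice touch, though the paper implicitly excludes it since adjacent points are distinct by Definition~\ref{defi_adyacencia}; your remark that one should not translate $a$ to the origin because odd translations are not homeomorphisms of $\mathbb{K}^2$ is also well taken. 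In short, you have supplied the proof that the paper omits, by the route the paper's setup makes natural.
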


We now define paths and arcs, which play a fundamental role throughout this work.

\begin{defi}
Let $X$ be an Alexandroff space. A \textbf{path} (respectively, an \textbf{arc}) in $X$ is the continuous (respectively, homeomorphic) image of a finite interval of the digital line $\mathbb{K}$. 
\end{defi}

In Alexandroff spaces, being connected and being arc-connected are equivalent (see for instance \cite[Theorem 3.2]{Khalimsky} or  \cite[Proposition 3]{slapal2006}). In particular, the digital plane $\mathbb{K}^2$ is arc-connected. Following \cite{Khalimsky} we define the Jordan curves.

\begin{defi}
Let $X$ be an Alexandroff space. A \textbf{Jordan curve} in $X$ is a connected subset $J \subseteq X$ with $\left| J \right| \geq 4$ such that $J \setminus \{x\}$ is an arc for every $x\in J$. 
\end{defi}

There is only one Jordan curve with four points. We always work with Jordan curves with more than four points.

\begin{propo}
\cite[Lemma 5.2]{Khalimsky} 
\label{curvacarac}
Let $J$ be a finite subset of an Alexandroff space $X$. $J$ is a Jordan curve if and only if the following conditions hold: 

\begin{enumerate}
\item $J$ is connected. 

\item $J$ has at least four points. 

\item $|A(x) \cap J| = 2$ for each $x \in J$. 
\end{enumerate}

If $J$ is a Jordan curve, then $A(x) \cap (J \setminus \{x\})$ is disconnected. 
\end{propo}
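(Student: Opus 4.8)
The plan is to prove both implications using the structural characterization of intervals given in Theorem \ref{arcosycaminos}, together with the identity $A(x)\cup\{x\}=cl(x)\cup N(x)$ from Proposition \ref{arcosycaminos1}. First I would establish the forward direction. Assume $J$ is a Jordan curve, so $J$ is connected, $|J|\geq 4$, and $J\setminus\{x\}$ is an arc for every $x\in J$. Fix $x\in J$ and write $J\setminus\{x\}=\{y_0,y_1,\dots,y_m\}$ enumerated so that (by Theorem \ref{arcosycaminos}(1) applied to the finite interval homeomorphic to $J\setminus\{x\}$) the endpoints $y_0,y_m$ satisfy $|A(y_i)\cap(J\setminus\{x\})|=1$ and the interior points satisfy $|A(y_i)\cap(J\setminus\{x\})|=2$. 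Since adjacency is a local notion (whether $\{a,b\}$ is connected depends only on $a,b$), removing $x$ from $J$ can only decrease $|A(y)\cap J|$, and it decreases it by exactly $1$ when $y\in A(x)$ and by $0$ otherwise. The key point is then to show that every point of $J$ has $|A(\cdot)\cap J|=2$. For an interior point $y_i$ of the arc $J\setminus\{x\}$ we already have $|A(y_i)\cap(J\setminus\{x\})|=2$, so $|A(y_i)\cap J|\geq 2$; and applying the same argument with a \emph{different} point $x'\in J$ removed (using $|J|\geq 4$ to guarantee $y_i$ is not an endpoint of $J\setminus\{x'\}$ for a suitable choice) forces $|A(y_i)\cap J|\le 2$. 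A clean way to organize this: for each $y\in J$ pick $x'\in J\setminus(A(y)\cup\{y\})$ if possible — then $|A(y)\cap J|=|A(y)\cap(J\setminus\{x'\})|\le 2$; if no such $x'$ exists then $J\subseteq A(y)\cup\{y\}=cl(y)\cup N(y)$, and one checks directly (using the explicit description of $A$ in $\mathbb{K}^2$, or more abstractly the fact that $cl(y)\cup N(y)$ contains no arc of length $\geq 3$ other than through $y$) that this contradicts $J\setminus\{y\}$ being an arc with $|J\setminus\{y\}|\ge 3$. This gives condition (3).

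For the reverse direction, assume (1)--(3) hold; I must show $J\setminus\{x\}$ is an arc for each $x\in J$. By Theorem \ref{arcosycaminos}(2) it suffices to show that $J\setminus\{x\}$ is connected and that it has exactly two points $u,v$ with $|A(\cdot)\cap(J\setminus\{x\})|=1$ while all other points $w$ have $|A(w)\cap(J\setminus\{x\})|=2$. Removing $x$ decreases $|A(y)\cap J|$ by $1$ precisely for the points $y\in A(x)\cap J$; since $|A(x)\cap J|=2$ by (3), there are exactly two such points, call them $u$ and $v$, and for these $|A(\cdot)\cap(J\setminus\{x\})|=1$, while every other $w\in J\setminus\{x\}$ keeps $|A(w)\cap(J\setminus\{x\})|=2$. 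So the endpoint/interior count is automatic from (3). The remaining — and I expect principal — obstacle is connectivity of $J\setminus\{x\}$: a priori deleting $x$ could disconnect $J$. Here I would argue by contradiction: suppose $J\setminus\{x\}=P\sqcup Q$ with $P,Q$ nonempty and no adjacency between them. Since $J$ is connected, $x$ must be adjacent to points in both $P$ and $Q$; but $|A(x)\cap J|=2$, so $x$ has exactly one neighbor $p\in P$ and one neighbor $q\in Q$. Now consider the counts: in $P$, the point $p$ has lost one neighbor (namely $x$), so $|A(p)\cap P|=1$; every other point of $P$ still has two neighbors inside $P$ (its two $J$-neighbors, neither of which is $x$ since $x$'s only $P$-neighbor is $p$, and neither of which lies in $Q$ by hypothesis). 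Thus $P$ is a finite connected space (connectivity of $P$ needs a short separate argument, or one notes $P$ together with $x$ is connected and uses the structure) with exactly \emph{one} point of adjacency-degree $1$ and all others of degree $2$ — which is impossible, since in any finite interval of $\mathbb{K}$, or any finite connected Alexandroff space, the number of degree-$1$ points in such a "path-like" configuration must be even (equivalently: a connected graph in which all but one vertex have degree $2$ cannot exist as the adjacency graph is essentially a path or cycle). This contradiction yields connectivity of $J\setminus\{x\}$, completing the reverse direction.

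Finally, for the last assertion — that $A(x)\cap(J\setminus\{x\})$ is disconnected when $J$ is a Jordan curve — I would use that $J\setminus\{x\}$ is an arc, hence homeomorphic to a finite interval $I$ of $\mathbb{K}$ under some homeomorphism $h$, with $x$ corresponding to "the two ends being separated." The set $A(x)\cap(J\setminus\{x\})$ consists of exactly the two points $u,v$ identified above (the images of the two endpoints of $I$, since these are precisely the points of $J\setminus\{x\}$ adjacent to $x$). It then suffices to show $\{u,v\}$ is disconnected, i.e.\ that $u$ and $v$ are not adjacent to each other. If they were, then $\{x,u,v\}$ would be connected, and one can check that $u,v\in A(x)$ with $u\in A(v)$ forces, via the explicit form of adjacency in $\mathbb{K}^2$ (or the cycle/path structure), that $J$ would have a point of adjacency-degree exceeding $2$ or that $J$ has only $3$ points — contradicting (2) or (3). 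The main work throughout is bookkeeping with the local adjacency structure; I expect the genuinely delicate step to be the connectivity argument in the reverse direction, where one must rule out that deleting a single degree-$2$ point disconnects the curve, and this is where the parity/"path-or-cycle" rigidity of finite connected Alexandroff spaces with bounded adjacency degree is used in an essential way.
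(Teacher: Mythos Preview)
The paper does not give its own proof of this proposition: it is quoted verbatim as \cite[Lemma 5.2]{Khalimsky} and used as a black box. So there is no ``paper's proof'' to compare against; I can only assess your argument on its own merits.

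Your reverse direction is essentially correct, and the parity observation is the right tool. Once you note that adjacency is symmetric, the adjacency relation on $J$ is an undirected graph; if $J\setminus\{x\}=P\sqcup Q$ with no edges between $P$ and $Q$, then the sum of degrees inside $P$ equals $2|P|-1$ (one odd-degree vertex $p$, the rest degree~$2$), which is odd --- contradiction. You do not even need $P$ connected for this, so your parenthetical worry there is unnecessary. The final assertion is also fine once you drop the appeal to $\mathbb{K}^2$: if the two endpoints $u,v$ of the arc $J\setminus\{x\}$ were adjacent, then each, having degree~$1$ in $J\setminus\{x\}$, would have the other as its unique neighbor, so $\{u,v\}$ would be a connected component of the connected set $J\setminus\{x\}$, forcing $|J|=3<4$.

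There is, however, a genuine gap in your forward direction. In the case $J\subseteq A(y)\cup\{y\}$ you assert that ``$cl(y)\cup N(y)$ contains no arc of length $\geq 3$ other than through $y$.'' This is false for general Alexandroff spaces (which is the setting of the proposition). Take the four-point poset $b<a<y$, $b<c<y$, $a$ and $c$ incomparable, with the Alexandroff topology. Then $A(y)=\{a,b,c\}$, and the subspace $\{a,b,c\}$ is homeomorphic to the $\mathbb{K}$-interval $\{1,2,3\}$ --- a genuine arc of length~$3$ sitting entirely inside $A(y)$. So your stated reason for a contradiction does not hold, and the fallback appeal to ``the explicit description of $A$ in $\mathbb{K}^2$'' is out of scope here. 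The correct way to finish this case is to use the Jordan-curve hypothesis at a \emph{different} point: in the example just described, $J=\{y,a,b,c\}$ fails to be a Jordan curve because $J\setminus\{a\}=\{y,b,c\}$ is a $3$-chain in which all pairs are adjacent, hence not an arc. In general, if $J\subseteq A(y)\cup\{y\}$ and $|J|\geq 4$, removing an endpoint of the arc $J\setminus\{y\}$ produces a $3$-point set in which $y$ is adjacent to both remaining points, and those two are themselves adjacent (being consecutive in the arc), so every point has degree~$2$ and the set cannot be an arc. You should replace your false structural claim with this argument. You also never explicitly secure the lower bound $|A(y)\cap J|\geq 2$ for \emph{all} $y$ (only for interior points of some $J\setminus\{x\}$); the missing step is: if $y$ had a unique $J$-neighbor $q$, then $y$ would have degree~$0$ in the arc $J\setminus\{q\}$, which is impossible.
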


One of the goals of this article is to explore an alternative proof of the topological version of the Jordan curve theorem. For that end, we will use an immersion of Rosenfeld's plane into the digital plane $\mathbb{K}^2$.
Consider the function (referred to as the \emph{slant map} in \cite{Khalimsky2}) $\Gamma: \mathbb{Z}^2 \to \mathbb{K}^2$ defined by:
$$
 \Gamma (x,y)=(x+y,y-x).
 $$
Figure \ref{sumerplano} illustrates the effect of $\Gamma$ on a subset of $\mathbb{Z}^2$. Notice that $\Gamma$ is injective and that the range of $\Gamma$ consists of pure points.

\begin{figure}[h]
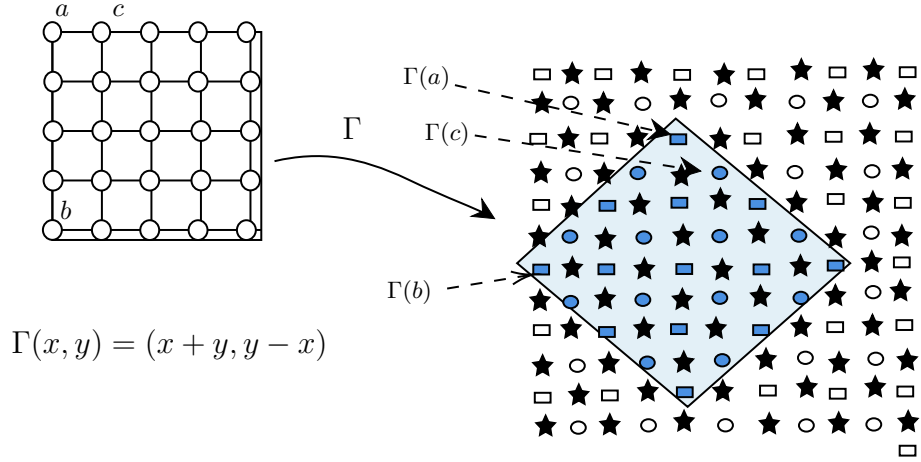

    \centering

\tikzset{every picture/.style={line width=0.75pt}} 


    \caption{Rosenfeld plane embedded in the digital plane.}
    \label{sumerplano}
\end{figure}

The following observations are taken from \cite[p. 51]{Khalimsky2}.

\begin{propo}
\label{4adyacentes}
Let $a, b \in \mathbb{Z}^2$.

\begin{enumerate}
\item $a$ and $b$ are 4-adjacent in $\mathbb{Z}^2$ if and only if $\Gamma(b) \in A(\Gamma(a))$.

\item $a$ and $b$ are 8-adjacent but not 4-adjacent in $\mathbb{Z}^2$ if and only if there exists a unique mixed point $c$ such that $\Gamma(b), \Gamma(a) \in A(c)$.
\end{enumerate}
\end{propo}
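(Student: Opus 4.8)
The plan is to reduce both statements to an explicit computation with the translation vector $(u,v) := b - a \in \mathbb{Z}^2$, using the description of $A(\cdot)$ in $\mathbb{K}^2$ recalled above together with the fact that $\Gamma$ has pure-point range. Writing $\Gamma(a) = (x+y,\, y-x)$, the key elementary identity is $\Gamma(b) - \Gamma(a) = (u+v,\, v-u)$, which converts adjacency questions about $\Gamma(a),\Gamma(b)$ into sign conditions on $u+v$ and $v-u$.

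For (1): since $\Gamma(a),\Gamma(b)$ are pure, the formula $A(x',y') = \{x'-1,x',x'+1\}\times\{y'-1,y',y'+1\}\setminus\{(x',y')\}$ shows that $\Gamma(b) \in A(\Gamma(a))$ holds precisely when $\Gamma(b)\neq\Gamma(a)$ and $\Gamma(b)-\Gamma(a) \in \{-1,0,1\}^2$, i.e. $|u+v|\le 1$, $|v-u|\le 1$, and $(u,v)\neq(0,0)$. From $2|u| = |(u+v)-(v-u)| \le |u+v| + |v-u| \le 2$ and the symmetric bound $2|v|\le 2$ one gets $(u,v)\in\{-1,0,1\}^2$; inspecting the eight nonzero cases, the two inequalities hold exactly for $(u,v)\in\{(\pm1,0),(0,\pm1)\}$, the diagonal vectors $(\pm1,\pm1)$ being excluded because then one of $u+v,\ v-u$ equals $\pm2$. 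Since $\{(\pm1,0),(0,\pm1)\}$ is precisely the set of $4$-adjacency directions, this is (1).

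For (2): I would first record the structural fact that a mixed point $c=(p,q)$ of $\mathbb{K}^2$ has exactly four adjacent points, all pure, namely the ``diamond'' $(p\pm1,q),(p,q\pm1)$, and that an ordered difference $Q-P$ of two distinct points of such a diamond can only be one of the eight vectors $(\pm2,0),(0,\pm2),(\pm1,\pm1)$. Hence two distinct pure points $P,Q$ lie in a common $A(c)$ with $c$ mixed if and only if $Q-P$ is one of these eight vectors, and in that case: if $Q-P\in\{(1,1),(1,-1),(-1,1),(-1,-1)\}$ there are exactly two such $c$ (seen directly: for $P=(s,t)$ with $s\equiv t\pmod 2$ and $Q=P+(1,1)$, both $(s+1,t)$ and $(s,t+1)$ work — or invoke Proposition \ref{dospuntosmixtos}, since $P,Q$ are then adjacent), whereas if $Q-P\in\{(\pm2,0),(0,\pm2)\}$ the common mixed point is the unique ``midpoint'' $c=\tfrac12(P+Q)$: uniqueness holds because the only ordered pair of diamond points of a mixed $c'$ with difference $(2,0)$ (resp.\ $(0,2)$, etc.) is the horizontal (resp.\ vertical) one, which pins $c'$ down. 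Applying this with $P=\Gamma(a)$, $Q=\Gamma(b)$ and using $\Gamma(b)-\Gamma(a)=(u+v,v-u)$: solving $(u+v,v-u)\in\{(\pm2,0),(0,\pm2)\}$ gives $(u,v)\in\{(1,1),(-1,-1),(-1,1),(1,-1)\}$, i.e.\ $a,b$ are $8$-adjacent but not $4$-adjacent; and solving $(u+v,v-u)\in\{(1,1),(1,-1),(-1,1),(-1,-1)\}$ gives the $4$-adjacency directions, which are thereby excluded. So a \emph{unique} common adjacent mixed point exists iff $a,b$ are $8$- but not $4$-adjacent.

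The routine parts are the two small linear systems and the finite case checks; the only genuine content is the description of the four pure neighbors of a mixed point and the counting of common mixed neighbors. I expect the step most in need of care to be distinguishing the ``difference $(\pm1,\pm1)$'' case (two common mixed points) from the ``difference $(\pm2,0)$ or $(0,\pm2)$'' case (exactly one) — once that dichotomy is nailed down, everything else is direct substitution. All of this can also be read off geometrically from Figures \ref{adyacencias_plano} and \ref{sumerplano}, which is presumably why \cite[p.~51]{Khalimsky2} states it without proof.
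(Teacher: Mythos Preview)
Your proof is correct. Note that the paper does not actually supply a proof of this proposition: it is stated as an observation taken from \cite[p.~51]{Khalimsky2} and then used freely throughout. Your explicit computation via the difference vector $\Gamma(b)-\Gamma(a)=(u+v,\,v-u)$, together with the analysis of the four-point ``diamond'' neighborhood of a mixed point, fills this gap cleanly. The dichotomy you single out --- differences $(\pm1,\pm1)$ yielding two common mixed neighbors (equivalently, Proposition~\ref{dospuntosmixtos}) versus differences $(\pm2,0),(0,\pm2)$ yielding exactly one --- is indeed the only nontrivial point, and your handling of it is sound.
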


In Figure \ref{relacion_gama_z}, we illustrate  the proposition above, indicating the three points $a$, $b$, and $c$.

\begin{figure}[h]
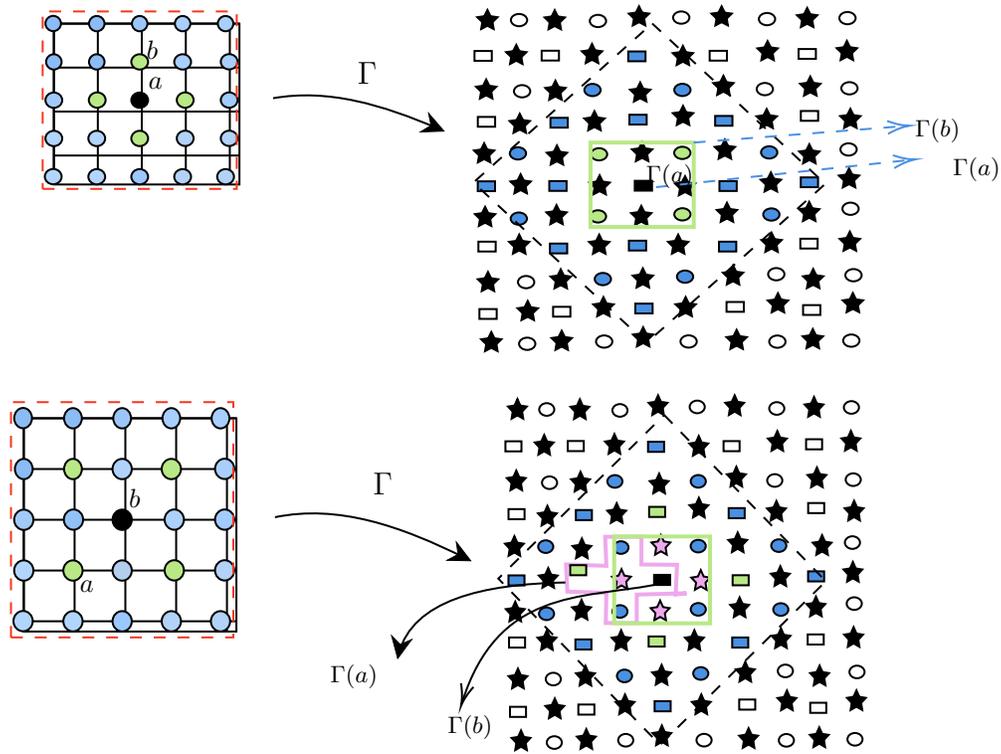

    \centering

\tikzset{every picture/.style={line width=0.75pt}} 



    \caption{$a$ and $b$ are  as in Proposition \ref{4adyacentes}.}
    \label{relacion_gama_z}
\end{figure}

We note that the use of coordinates to represent the points of $\mathbb{Z}^2$ is needed only in the proof of Proposition \ref{4adyacentes}, for the rest of what follows, we only use the fact that $\Gamma$ is one-to-one, its range consists of all pure points and it satisfies Proposition \ref{4adyacentes}.
For instance,  we could have used  $\Gamma_c(x,y)=(x+y,y-x+c)$  with $c$ any even integer.

\section{ Preservation of arcs and curves}
\label{sec-preser-curvas}

In this section, we begin the study of the relationship between connectivity in $\mathbb{Z}^2$ 
  (i.e., in the graph-theoretic sense) and connectivity in 
$\mathbb{K}^2$ (i.e., in the topological sense). We will analyze the behavior of connectivity under the functions 
 $\Gamma$, $\Gamma^{-1}$, and another function $\Gamma^*$ 
that we will define later. Our focus will be on the preservation of arcs in $\mathbb{K}^2$
 and curves in $\mathbb{Z}^2$, leaving the study of connectivity preservation for the subsequent sections.

The following proposition was stated without proof in \cite{Khalimsky2}; we include its proof for completeness.

\begin{propo}
\label{caminor1}
Let $C$ be an arc in $\mathbb{K}^2$ that consists only of pure points and with endpoints $z$ and $w$. Then $\Gamma^{-1}(C)$ is a 4-path with endpoints $\Gamma^{-1}(w)$ and $\Gamma^{-1}(z)$.
\end{propo}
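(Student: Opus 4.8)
The plan is to transfer the structure of the arc $C$ back to $\mathbb{Z}^2$ via $\Gamma^{-1}$ using the two parts of Proposition \ref{4adyacentes}. First I would recall from Theorem \ref{arcosycaminos} that $C$, being an arc, is a finite connected subset of $\mathbb{K}^2$ with exactly two points $z,w$ having $|A(\cdot)\cap C|=1$ and every other point having $|A(\cdot)\cap C|=2$; here all these adjacencies are relative to $C$ since $C$ carries the subspace topology. The key point is that $C$ consists only of pure points, so every adjacency inside $C$ is between two pure points. By Proposition \ref{4adyacentes}(1), for pure points $\Gamma(a),\Gamma(b)$ we have $\Gamma(b)\in A(\Gamma(a))$ if and only if $a$ and $b$ are $4$-adjacent in $\mathbb{Z}^2$; so the adjacency relation that $C$ inherits from $\mathbb{K}^2$ corresponds \emph{exactly}, under $\Gamma^{-1}$, to the $4$-adjacency relation on $\Gamma^{-1}(C)\subseteq\mathbb{Z}^2$. (Here I use that $\Gamma$ is a bijection onto the pure points, so $\Gamma^{-1}$ is well-defined on all of $C$.)

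Next I would make this correspondence precise: for $p\in C$, a point $q\in C$ lies in $A(p)$ iff $\{p,q\}$ is connected in $\mathbb{K}^2$ iff (Proposition \ref{4adyacentes}(1), both pure) $\Gamma^{-1}(p)$ and $\Gamma^{-1}(q)$ are $4$-adjacent. Hence for each $p\in C$ the number of $4$-neighbors of $\Gamma^{-1}(p)$ inside $\Gamma^{-1}(C)$ equals $|A(p)\cap C|$. Therefore $\Gamma^{-1}(z)$ and $\Gamma^{-1}(w)$ each have exactly one $4$-neighbor in $\Gamma^{-1}(C)$, i.e.\ they are $4$-endpoints, and every other point of $\Gamma^{-1}(C)$ has exactly two $4$-neighbors in $\Gamma^{-1}(C)$. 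Since $\Gamma^{-1}(C)$ is finite, by the definition of a $4$-path this is precisely the statement that $\Gamma^{-1}(C)$ is a $4$-path with $4$-endpoints $\Gamma^{-1}(z)$ and $\Gamma^{-1}(w)$. One should also observe $|C|\geq 2$ (an arc is homeomorphic to a finite interval of $\mathbb{K}$, and the degenerate case $|C|=1$ would force $z=w$; if the paper's convention allows $|C|=1$ one notes the conclusion is trivial there, $\Gamma^{-1}(C)$ being a single point which is vacuously a $4$-path).

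The main obstacle, and the only subtle point, is that the definition of a $4$-path (Definition before Proposition \ref{curvasimple}) requires not merely that each point has the right number of $4$-\emph{adjacent neighbors in the set} but, implicitly through the reformulation ``$x_i$ and $x_j$ are $4$-adjacent iff $j=i+1$,'' that the set genuinely has the linear order structure of a path with no extra chords; however, the cited definition that we are allowed to use is in fact the purely local degree condition (two $4$-endpoints, all others with exactly two $4$-neighbors in $C$), so matching that local condition as above suffices, and I would phrase the argument to appeal only to that local characterization. Thus the proof reduces to: (i) note $\Gamma$ bijects pure points with $\mathbb{Z}^2$; (ii) invoke Proposition \ref{4adyacentes}(1) to identify $C$'s internal adjacency with $4$-adjacency on $\Gamma^{-1}(C)$; (iii) apply Theorem \ref{arcosycaminos}(1) to count adjacencies; (iv) read off the definition of $4$-path. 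No case analysis is needed precisely because $C$ avoids mixed points, which is exactly why part (2) of Proposition \ref{4adyacentes} never enters.
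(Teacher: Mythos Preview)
Your proposal is correct and follows essentially the same approach as the paper: both arguments use Theorem~\ref{arcosycaminos} to read off the adjacency counts in $C$, then invoke the ``iff'' in Proposition~\ref{4adyacentes}(1) to transport those counts to $4$-adjacency counts in $\Gamma^{-1}(C)$, and conclude via the local degree definition of a $4$-path. Your write-up is more explicit about the bijectivity of the correspondence and about the degenerate case, but the underlying argument is the same.
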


\begin{proof}
Let $x \in \Gamma^{-1}(C)$ such that $x \notin \{ \Gamma^{-1}(w), \Gamma^{-1}(z) \}$, and let $v, y$ be such that $A(\Gamma(x)) \cap C = \{ v, y \}$. Since $y, v$ are pure points, by Proposition \ref{4adyacentes}, it follows that $\Gamma^{-1}(y)$ and $\Gamma^{-1}(v)$ are the two 4-adjacent points to $x$ in $\Gamma^{-1}(C)$. Similarly, if $x \in \{ \Gamma^{-1}(w), \Gamma^{-1}(z) \}$, then $x$ has only one 4-adjacent point in $\Gamma^{-1}(C)$. Therefore, $\Gamma^{-1}(C)$ is a 4-path.
\end{proof}

\begin{figure}[h]
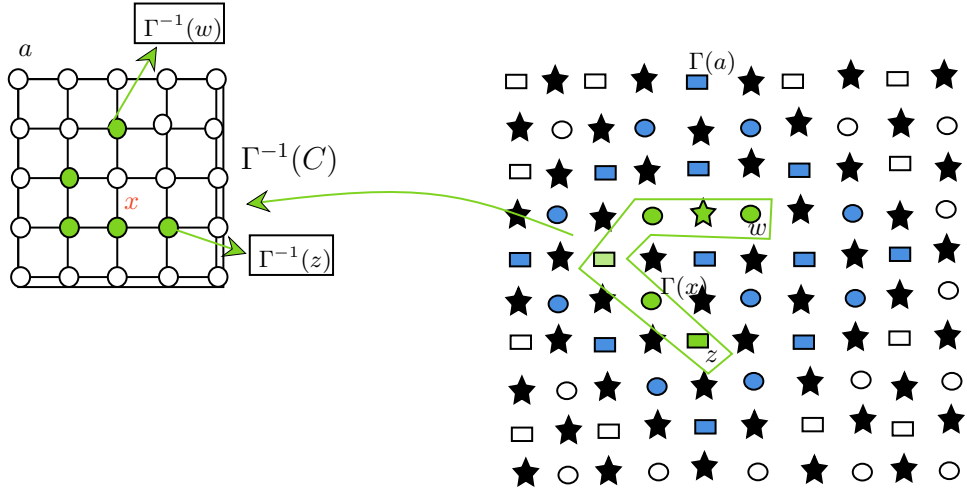

    \centering

\tikzset{every picture/.style={line width=0.75pt}} 


\caption{$C$ is an arc, but $\Gamma^{-1}(C)$ is not an  8-path. Note that  $\Gamma(x)$  has two adjacent points in  $C$, but $x$ has three  8-adjacent points in  $\Gamma^{-1}(C)$.}
    \label{caso1.1}
\end{figure}

The previous result does not hold in general, as illustrated in Figure \ref{caso1.1}. There exist arcs $C$ in $\mathbb{K}^2$ such that $\Gamma^{-1}(C)$ is not an 8-path. It is natural to ask what conditions are necessary (or sufficient) for $\Gamma^{-1}(C)$ to be an 8-path. To answer this question, we introduce a function $\Gamma^*$ that transforms subsets of $\mathbb{Z}^2$ into subsets of $\mathbb{K}^2$. For each $A \subseteq \mathbb{Z}^2$, we define
\begin{equation}
\label{defoperador*}
\begin{aligned}
&\Gamma^*(A)=\Gamma(A) \cup\{x\in \mathbb{K}^2: x \text {  is a mixed point such that } 
\\
& \qquad  \qquad \qquad 
N(x) \subseteq \Gamma(A) \cup\{x\} \text { or } cl(x) \subseteq \Gamma(A) \cup\{x\}\}.
\end{aligned}
\end{equation}

Note that $\Gamma^{-1}(\Gamma^*(B)) = B$ for every $B \subseteq \mathbb{Z}^2$. This implies that $\Gamma(A)$ is the set of pure points belonging to $\Gamma^*(A)$. The operator $\Gamma^*$ plays a crucial role in what follows and was motivated by a similar operator defined in \cite{Khalimsky2}.

A central problem is to determine the form of $\Gamma^{-1}(J)$ when $J$ is a Jordan curve in $\mathbb{K}^2$, in such a way that we can apply the Rosenfeld's Jordan curve theorem in $\mathbb{Z}^2$.

One of the main results of this section is that $C \subseteq \mathbb{Z}^2$ is an 8-path if and only if $\Gamma^{*}(C)$ is an arc in $\mathbb{K}^2$ (see Theorem \ref{camino*}). To prove this, we will need several auxiliary results, in particular,  we need to analyze the properties of $\Gamma^*(\Gamma^{-1}(J))$ when $J$ is a Jordan curve.

\begin{propo}
\label{1contenencia}
(i) For every $A \subseteq \mathbb{K}^2$, the pure points of $\Gamma^{*}(\Gamma^{-1} (A))$ are exactly the pure points of $A$. In particular, if all points of $A$ are pure, then $A \subseteq \Gamma^{*}(\Gamma^{-1} (A))$.

(ii) $J \subseteq \Gamma^{*}(\Gamma^{-1} (J))$ for every Jordan curve $J$ in $\mathbb{K}^2$.

(iii) If $C$ is an arc in $\mathbb{K}^2$, then
$C \setminus \{a \in C : a \text{ is a mixed endpoint}\} \subseteq \Gamma^{*}(\Gamma^{-1} (C))$. Moreover,   $x\not\in \Gamma^{*}(\Gamma^{-1} (C))$ if $x$ is a mixed final end point of  $C$.
\end{propo}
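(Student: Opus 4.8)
The plan is to establish the three parts in order, using (i) as the basis for (ii) and (iii). Part (i) is almost formal: by the definition \eqref{defoperador*}, every point of $\Gamma^{*}(B)$ that is not already in $\Gamma(B)$ is mixed, while $\Gamma(B)\subseteq\Gamma(\mathbb{Z}^{2})$ consists only of pure points; hence the pure points of $\Gamma^{*}(B)$ are exactly those of $\Gamma(B)$. Taking $B=\Gamma^{-1}(A)$ and using that $\Gamma$ is a bijection of $\mathbb{Z}^{2}$ onto the set of pure points of $\mathbb{K}^{2}$, one gets $\Gamma(\Gamma^{-1}(A))=\{a\in A:\ a\ \text{is pure}\}$, which is the claimed equality of sets of pure points; the final sentence of (i) is then immediate.

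The engine for (ii) and (iii) is a structural fact about a mixed point $x$: by Proposition \ref{arcosycaminos1} we have $A(x)\cup\{x\}=N(x)\cup cl(x)$, with $N(x)\cap cl(x)=\{x\}$, so $A(x)\setminus\{x\}$ consists of four pure points, two of which form $N(x)\setminus\{x\}$ and two of which form $cl(x)\setminus\{x\}$; moreover, reading off the adjacencies of $\mathbb{K}^{2}$ (Figure \ref{adyacencias_plano}) together with Proposition \ref{dospuntosmixtos}, these four points form a $4$-cycle whose two pairs of non-adjacent points are precisely $N(x)\setminus\{x\}$ and $cl(x)\setminus\{x\}$. Consequently, a two-element subset of $A(x)\setminus\{x\}$ is disconnected if and only if it equals $N(x)\setminus\{x\}$ or $cl(x)\setminus\{x\}$. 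Granting this, part (ii) is short: by (i) the pure points of $J$ already lie in $\Gamma^{*}(\Gamma^{-1}(J))$, so let $x\in J$ be mixed; by Proposition \ref{curvacarac}, $A(x)\cap(J\setminus\{x\})$ has exactly two points and is disconnected, hence it equals $N(x)\setminus\{x\}$ or $cl(x)\setminus\{x\}$. In either case those two points are pure points of $J$, so they lie in $\Gamma(\Gamma^{-1}(J))$ by (i); therefore $N(x)\subseteq\Gamma(\Gamma^{-1}(J))\cup\{x\}$ or $cl(x)\subseteq\Gamma(\Gamma^{-1}(J))\cup\{x\}$, and $x\in\Gamma^{*}(\Gamma^{-1}(J))$ by \eqref{defoperador*}.

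For part (iii), the pure points of $C$ are again handled by (i). Let $x\in C$ be a mixed point that is not an endpoint. By Theorem \ref{arcosycaminos}, $C$ is homeomorphic to a finite interval $[m,n]$ of $\mathbb{K}$; under such a homeomorphism $x$ corresponds to some $k$ with $m<k<n$, and the two points of $C$ adjacent to $x$ correspond to $k-1$ and $k+1$. Since in $\mathbb{K}$ one has $A(k)=\{k-1,k+1\}$ and $k-1,k+1$ are not adjacent, and since a homeomorphism preserves connectedness of two-point sets, $A(x)\cap(C\setminus\{x\})$ consists of two non-adjacent points; the argument of (ii) now gives $x\in\Gamma^{*}(\Gamma^{-1}(C))$. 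Finally, suppose $x$ is a mixed endpoint of $C$, so that $A(x)\cap C$ is a single point. Both $N(x)\setminus\{x\}$ and $cl(x)\setminus\{x\}$ are two-element subsets of $A(x)$ consisting of pure points, so if either were contained in $\Gamma(\Gamma^{-1}(C))$, that is, in the set of pure points of $C$, then $A(x)\cap C$ would contain at least two points --- a contradiction. Hence neither inclusion demanded by \eqref{defoperador*} holds, and $x\notin\Gamma^{*}(\Gamma^{-1}(C))$.

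The part I expect to require the most care is the structural fact above: proving that the four pure points of $A(x)\setminus\{x\}$ around a mixed point $x$ form a $4$-cycle, and --- what really matters --- that its two diagonal (non-adjacent) pairs are exactly $N(x)\setminus\{x\}$ and $cl(x)\setminus\{x\}$. This needs a separate look at the two kinds of mixed point and a careful count, via Proposition \ref{dospuntosmixtos}, of which mixed points are shared neighbours of a given pair among the four pure points. Once this is in hand, (ii) and (iii) reduce to the short arguments sketched above, and (i) is routine.
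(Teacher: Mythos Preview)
Your proposal is correct and follows essentially the same route as the paper. The paper's proof of (ii) compresses your ``structural fact'' into the single phrase ``$J$ cannot turn at a mixed point'' and concludes directly that $\{a,b\}$ equals $N(x)\setminus\{x\}$ or $cl(x)\setminus\{x\}$; you unpack this via Proposition~\ref{curvacarac} (disconnectedness of $A(x)\cap(J\setminus\{x\})$) plus the explicit identification of the non-adjacent pairs in $A(x)$, which is the same idea made rigorous. For (iii) the paper just writes ``analogous to (ii)'', whereas you supply the missing justification that the two $C$-neighbours of a mixed interior point are non-adjacent by pulling back through the homeomorphism with an interval of $\mathbb{K}$; this is a legitimate and slightly more careful way to obtain what the paper takes for granted.
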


\begin{proof} 
(i) This is immediate from the definitions.

(ii) Suppose $J$ is a Jordan curve and $x \in J$. By (i), we may assume that $x$ is mixed. Since $J$ is a Jordan curve, we have that $A(x) \cap J = \{a, b\}$, where $a$ and $b$ are pure points. Since $J$ cannot turn at a mixed point, we conclude that either $cl(x) = \{a, b, x\}$ or $N(x) = \{a, b, x\}$. Thus, $x \in \Gamma^{*}(\Gamma^{-1} (J))$. 

(iii) This is analogous to (ii).
\end{proof}

\begin{propo}
\label{suficiente_8}
Let $C \subseteq \mathbb{K}^2$ such that 
$\Gamma^{*}(\Gamma^{-1}(C)) = C$. If $C$ is an arc in $\mathbb{K}^2$, then $\Gamma^{-1}(C)$ is an 8-path in $\mathbb{Z}^2$.
\end{propo}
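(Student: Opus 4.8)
The plan is to set $S:=\Gamma^{-1}(C)$ and to verify directly the two defining properties of an $8$-path: that $S$ has exactly two $8$-endpoints and that every other point of $S$ has exactly two $8$-adjacent points in $S$. Two preliminary facts come for free: $C$ is finite (it is an arc), hence so is $S$; and since $\Gamma^{*}(\Gamma^{-1}(C))=C$, the pure points of $C$ are exactly $\Gamma(S)$ (Proposition \ref{1contenencia}(i), or immediate from \eqref{defoperador*}), so for $b\in\mathbb{Z}^2$ we have $b\in S\iff\Gamma(b)\in C$. Next I would locate the endpoints: if $C$ had a mixed endpoint $m$, then Proposition \ref{1contenencia}(iii) would give $m\notin\Gamma^{*}(\Gamma^{-1}(C))=C$, a contradiction; hence the two endpoints of the arc $C$ provided by Theorem \ref{arcosycaminos}(1) are pure, say $\Gamma(z_0)$ and $\Gamma(w_0)$ with $z_0\neq w_0$ in $S$. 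These will turn out to be the two $8$-endpoints of $S$.

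The crux is the following local counting identity: for every $x\in S$,
\[
\#\{\,b\in\mathbb{Z}^2 : b \text{ is } 8\text{-adjacent to } x \text{ and } b\in S\,\}=|A(\Gamma(x))\cap C|.
\]
Granting this, the proof finishes quickly: by Theorem \ref{arcosycaminos}(1) (adjacency being a topological invariant, so it applies to the arc $C$) the right-hand side is $1$ when $x\in\{z_0,w_0\}$ and $2$ for every other $x\in S$; thus $S$ has exactly two $8$-endpoints, namely $z_0$ and $w_0$, and every other point of $S$ has exactly two $8$-adjacent points in $S$, i.e.\ $S=\Gamma^{-1}(C)$ is an $8$-path.

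To prove the identity I would split the eight points $8$-adjacent to $x$ into the four $4$-adjacent ones and the four diagonal ones. By Proposition \ref{4adyacentes}(1), $\Gamma$ carries the four $4$-neighbours of $x$ bijectively onto the four pure (corner) points of $A(\Gamma(x))$, so the number of $4$-neighbours of $x$ lying in $S$ equals the number of pure points of $A(\Gamma(x))$ lying in $C$. For a diagonal neighbour $b$ of $x$, Proposition \ref{4adyacentes}(2) produces the unique mixed point $M_b$ with $\Gamma(x),\Gamma(b)\in A(M_b)$, and $b\mapsto M_b$ is a bijection from the four diagonal neighbours of $x$ onto the four mixed (edge) points of $A(\Gamma(x))$. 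Since $\Gamma(b)-\Gamma(x)\in\{(\pm 2,0),(0,\pm 2)\}$, the point $M_b$ is the midpoint of $\Gamma(x)$ and $\Gamma(b)$, and a short parity check shows that $\{\Gamma(x),M_b,\Gamma(b)\}$ equals either $N(M_b)$ or $cl(M_b)$ (the triple ``collinear with $M_b$''), while the other of $N(M_b)$, $cl(M_b)$ equals $M_b$ together with the two corners $P,P'$ of $A(\Gamma(x))$ that are adjacent to $M_b$. I then establish $b\in S\iff M_b\in C$. If $b\in S$, then $\Gamma(b)\in\Gamma(S)$, and together with $\Gamma(x)\in\Gamma(S)$ the collinear triple lies in $\Gamma(S)\cup\{M_b\}$, so \eqref{defoperador*} puts $M_b\in\Gamma^{*}(S)=C$. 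Conversely, if $M_b\in C=\Gamma^{*}(S)$, then one of $N(M_b)\setminus\{M_b\}$, $cl(M_b)\setminus\{M_b\}$ is contained in $\Gamma(S)$; if this were the corner pair $\{P,P'\}$, then $\{M_b,P,P'\}\subseteq A(\Gamma(x))\cap C$ would give three points of $C$ adjacent to $\Gamma(x)$, which is impossible since $C$ is an arc (every point of an arc has at most two adjacent points in it, Theorem \ref{arcosycaminos}(1)); hence it is the collinear pair $\{\Gamma(x),\Gamma(b)\}$ that lies in $\Gamma(S)$, so $\Gamma(b)\in\Gamma(S)$ and $b\in S$. Summing the contributions of corners and edges yields the displayed identity.

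The hard part is precisely this converse half of the pointwise equivalence in the diagonal case: excluding the possibility that a mixed point $M_b$ lies in $\Gamma^{*}(S)$ ``for the wrong reason'', i.e.\ because of the corner pair rather than because of $\{\Gamma(x),\Gamma(b)\}$. This is exactly where the hypothesis that $C$ itself is an arc (and not merely that $S=\Gamma^{-1}(C)$ embeds in some arc) is used, through the bound $|A(p)\cap C|\le 2$; Figure \ref{caso1.1} shows that without the assumption $\Gamma^{*}(\Gamma^{-1}(C))=C$ the statement genuinely fails. The remaining ingredients — the bijection between $4$-neighbours and corners, the bijection between diagonal neighbours and edges, and the parity bookkeeping identifying which of $N(M_b)$, $cl(M_b)$ is the collinear triple — are routine once the configurations of Figures \ref{adyacencias_plano} and \ref{relacion_gama_z} are written out.
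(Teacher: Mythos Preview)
Your proof is correct. The underlying engine is the same as the paper's---the correspondence from Proposition~\ref{4adyacentes} between $4$-neighbours of $x$ and pure points of $A(\Gamma(x))$, and between diagonal neighbours of $x$ and mixed points of $A(\Gamma(x))$---but you package it differently. The paper fixes $x$, lets $A(\Gamma(x))\cap C=\{u,v\}$, and then runs three separate cases according to whether $u,v$ are pure/pure, mixed/mixed, or pure/mixed, in each case producing exactly two $8$-neighbours of $x$ in $\Gamma^{-1}(C)$ and excluding a third by contradiction. You instead prove the single bijective identity $\#\{b\in S:b\text{ is $8$-adjacent to }x\}=|A(\Gamma(x))\cap C|$, which collapses those three cases into one argument and handles the endpoints simultaneously. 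The only nontrivial step---ruling out that a mixed $M_b\in C$ is there ``for the wrong reason'' (via the corner pair)---is exactly the contradiction the paper derives inside each of its subcases, but you isolate it once. Your route is therefore a streamlined reorganisation of the paper's argument rather than a different idea; what it buys is uniformity and brevity, at the cost of making the parity bookkeeping for $N(M_b)$ versus $cl(M_b)$ explicit.
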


\begin{proof}
Let $z$ and $w$ be the endpoints of $C$, by Proposition \ref{1contenencia}, $z$ and $w$ are pure points.  Let $x \in \Gamma^{-1}(C)$. We will consider two cases.

\medskip

\noindent\textbf{Case 1:} Suppose $x \notin \{\Gamma^{-1}(z), \Gamma^{-1}(w)\}$. We want to show that $x$ has exactly two 8-adjacent points in $\Gamma^{-1}(C)$. Since $w \neq \Gamma(x) \neq z$ and $C$ is an arc, there exist $u, v$ such that $A(\Gamma(x)) \cap C = \{u, v\}$. Consider the following subcases:

\medskip 

\begin{itemize}
\item[] \hspace{-1,2cm}\textbf{1.1:} Suppose that $u$ and $v$ are pure points. By Proposition \ref{4adyacentes}, $\Gamma^{-1}(u)$ and $\Gamma^{-1}(v)$ are 4-adjacent to $x$, and in particular, they are 8-adjacent to $x$. Now, we will show that $x$ has no other 8-adjacent points in $\Gamma^{-1}(C)$. Suppose not, and let $y \in \Gamma^{-1}(C)$ be another 8-adjacent point to $x$, such that $\Gamma^{-1}(u) \neq y \neq \Gamma^{-1}(v)$. We claim that $y$ is not 4-adjacent to $x$. Indeed, suppose that $y$ is 4-adjacent to $x$; then by Proposition \ref{4adyacentes}, $\Gamma(y)$ must be adjacent to $\Gamma(x)$. Since $\Gamma$ is injective, $u \neq \Gamma(y) \neq v$, so $|A(\Gamma(x)) \cap C| = 3$, which contradicts the fact that $C$ is an arc. Therefore, $x$ and $y$ are 8-adjacent but not 4-adjacent, and by Proposition \ref{4adyacentes}, there exists a unique mixed point $w_1$ such that $\Gamma(x), \Gamma(y) \in A(w_1)$. Since $\Gamma(x)$ and $\Gamma(y)$ are not adjacent, we have either $N(w_1) = \{\Gamma(x), \Gamma(y), w_1\}$ or $cl(w_1) = \{\Gamma(x), \Gamma(y), w_1\}$. Thus, $w_1 \in \Gamma^{*}(\Gamma^{-1}(C)) = C$, which implies that $w_1 \in A(\Gamma(x)) \cap C$. Since $u$ and $v$ are pure points, we have $|A(\Gamma(x)) \cap C| = 3$, which contradicts the fact that $C$ is an arc.
This shows that $x$ has only two 8-adjacent points in $\Gamma^{-1}(C)$.

\medskip

\item[]\hspace{-1,2cm} \textbf{1.2:}  Suppose that $u$ and $v$ are mixed points. Then neither $u$ nor $v$ are endpoints of $C$, so $|A(u) \cap C| = |A(v) \cap C| = 2$. It follows that $A(u) \cap C = \{\Gamma(x), y\}$ and $A(v) \cap C = \{\Gamma(x), z_1\}$, where $y$ and $z_1$ are pure points of $C$. 

Note that $\Gamma(x)$ and $y$ are not adjacent, since $A(\Gamma(x)) \cap C = \{u, v\}$. Thus, $u$ is the only mixed point that is simultaneously adjacent  to both $\Gamma(x)$ and $y$ (see Figure \ref{adyacencias_plano}). Similarly, $v$ is the only mixed point adjacent to both $z_1$ and $\Gamma(x)$. Therefore, by Proposition \ref{4adyacentes}, $\Gamma^{-1}(z_1)$ and $\Gamma^{-1}(y)$ are 8-adjacent to $x$. Thus, $x$ has at least two 8-adjacent points in $\Gamma^{-1}(C)$. 

Suppose that $w_1 \in \Gamma^{-1}(C)$ is another 8-adjacent point to $x$, distinct from $\Gamma^{-1}(z_1)$ and $\Gamma^{-1}(y)$. Observe that $w_1$ is not 4-adjacent to $x$, since $|A(\Gamma(x)) \cap C| = 2$. By Proposition \ref{4adyacentes}, there exists a unique mixed point $w'_1$ adjacent to both $\Gamma(x)$ and $\Gamma(w_1)$. Using the same reasoning as in case 1.1, we conclude that $w'_1 \in C$. 

Now, let us show that $u \neq w'_1 \neq v$. Suppose that $w'_1 = u$. Then we would have $y$, $\Gamma(x)$, and $\Gamma(w_1) \in A(u)$. Since $\Gamma(x) \neq \Gamma(w_1) \neq y$ and $y$ is not adjacent to $\Gamma(x)$, we see that since $u$ is a mixed point,  $\Gamma(w_1)$ must be adjacent to $\Gamma(x)$ (remember the adjacency of a mixed point, seen figure \ref{adyacencias_plano}), which cannot be true because $|A(\Gamma(x)) \cap C| = 2$ and $u \neq \Gamma(w_1) \neq v$. Therefore, $u \neq w'_1$. A similar argument shows that $v \neq w'_1$. Thus, $|A(\Gamma(x)) \cap C| = 3$, which contradicts the fact that $C$ is an arc. This shows that $x$ has only two 8-adjacent points in $\Gamma^{-1}(C)$.

\medskip

\item[]\hspace{-1,2cm} \textbf{1.3:} Suppose $u$ is mixed and $v$ is pure. Using similar reasoning as in the previous cases, we conclude that $x$ has exactly two 8-adjacent points in $\Gamma^{-1}(C)$.
\end{itemize}

\medskip 

\noindent \textbf{Case 2:} Suppose $x \in \{\Gamma^{-1}(z), \Gamma^{-1}(w)\}$. Since $\Gamma(x)$ is an endpoint of $C$, we have $A(\Gamma(x)) \cap C = \{u\}$. If $u$ is pure, then $\Gamma^{-1}(u)$ is 8-adjacent to $x$ in $\Gamma^{-1}(C)$. If $u$ is mixed, then $A(u) \cap C = \{x, z_1\}$, where $z_1$ is a pure point. By Proposition \ref{4adyacentes}, $\Gamma^{-1}(z_1)$ is 8-adjacent to $x$, since $z_1$ is not adjacent to $\Gamma(x)$ and has a common adjacent mixed point. 

We have shown that $x$ has at least one 8-adjacent point in $\Gamma^{-1}(C)$. Using similar reasoning as in Case 1 and noting that $z$ and $w$ are endpoints of $C$, we conclude that $x$ has exactly one 8-adjacent point in $\Gamma^{-1}(C)$. In other words, $\Gamma^{-1}(z)$ and $\Gamma^{-1}(w)$ are the endpoints of $\Gamma^{-1}(C)$.

This concludes the proof that $\Gamma^{-1}(C)$ is an 8-path.
\end{proof}

As a corollary of the previous arguments, we now describe the conditions under which the preimage of a Jordan curve  under $\Gamma$ is a  simple curve in the Rosenfeld plane $\mathbb{Z}^2$.

\begin{propo}
\label{4-curva} 
\label{8-curva}
Let $J$ be a Jordan curve in $\mathbb{K}^2$. 

\begin{itemize} 
\item[(i)] If $J$ contains only pure points, then $\Gamma^{-1}(J)$ is a closed  4-curve in $\mathbb{Z}^2$. 

\item[(ii)] If $J$ satisfies $\Gamma^{*}(\Gamma^{-1}(J)) = J$, then $\Gamma^{-1}(J)$ is a closed 8-curve in $\mathbb{Z}^2$. 
\end{itemize} 
\end{propo}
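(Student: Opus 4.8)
The plan is to treat the two parts separately, each reducing to machinery already available in the excerpt. Part (i) is a quick consequence of Proposition \ref{4adyacentes}(1) together with the characterization of Jordan curves (Proposition \ref{curvacarac}), while part (ii) is obtained by reusing, almost verbatim, Case 1 of the proof of Proposition \ref{suficiente_8}.

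For (i): fix $p \in \Gamma^{-1}(J)$, so $\Gamma(p) \in J$. Since $J$ is a Jordan curve, $A(\Gamma(p)) \cap J = \{u, v\}$ with $u \neq v$, and by hypothesis $u$ and $v$ are pure points. By Proposition \ref{4adyacentes}(1), a point $q \in \mathbb{Z}^2$ is 4-adjacent to $p$ in $\mathbb{Z}^2$ if and only if $\Gamma(q) \in A(\Gamma(p))$; hence a point $q \in \Gamma^{-1}(J)$ is 4-adjacent to $p$ if and only if $\Gamma(q) \in A(\Gamma(p)) \cap J = \{u,v\}$, i.e. $q \in \{\Gamma^{-1}(u), \Gamma^{-1}(v)\}$. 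Since $\Gamma$ is injective and $u \neq v$, these are two distinct points, so $p$ has exactly two 4-neighbours in $\Gamma^{-1}(J)$. As $\Gamma^{-1}(J)$ is finite, it is a closed 4-curve.

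For (ii): fix $p \in \Gamma^{-1}(J)$. Because $J$ is a Jordan curve, $|A(x) \cap J| = 2$ for every $x \in J$; in particular $J$ has no ``endpoint'', so $\Gamma(p)$ is in exactly the situation of a point $x$ with $\Gamma(x)$ not an endpoint in the proof of Proposition \ref{suficiente_8}. I would then run the three subcases 1.1--1.3 of that proof with $C$ replaced by $J$: the only properties of $C$ used there are that $|A(\Gamma(x)) \cap C| = 2$, that any mixed point of $C$ adjacent to $\Gamma(x)$ again satisfies $|A(\cdot) \cap C| = 2$ (true here since every point of $J$ does), and the standing hypothesis $\Gamma^{*}(\Gamma^{-1}(C)) = C$; all of these hold for $J$. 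The conclusion is that $p$ has exactly two 8-adjacent points in $\Gamma^{-1}(J)$. Since $p$ was arbitrary and $\Gamma^{-1}(J)$ is finite, $\Gamma^{-1}(J)$ is a closed 8-curve.

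The main point to be careful about is that the adaptation of Case 1 of Proposition \ref{suficiente_8} truly transfers: one must verify that nowhere in subcases 1.1--1.3 was the existence of endpoints of $C$ tacitly used (it was not---those subcases are precisely the ``interior'' case), and one should rule out the degenerate possibility that the two 8-neighbours of $p$ produced by the argument coincide, which would force $|A(\Gamma(p)) \cap J| < 2$ or a repeated pure point and hence contradict Proposition \ref{curvacarac} (if needed, this is where the convention $|J| > 4$ can be invoked). Beyond that, everything rests on Proposition \ref{4adyacentes}, which converts 4- and 8-adjacency in $\mathbb{Z}^2$ into adjacency, respectively sharing a unique mixed point, in $\mathbb{K}^2$, and on the fact---recorded in the definition of $\Gamma^{*}$ and in Proposition \ref{1contenencia}---that a Jordan curve ``does not turn at a mixed point''.
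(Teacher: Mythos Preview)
Your proposal is correct and follows essentially the same approach as the paper: for (i) you use Proposition~\ref{4adyacentes}(1) together with $|A(\Gamma(p))\cap J|=2$, and for (ii) you reduce to Case~1 of the proof of Proposition~\ref{suficiente_8}, which is exactly what the paper does. Your write-up is in fact slightly more careful than the paper's, since you flag the need to check that subcases 1.1--1.3 never use endpoints and that the two 8-neighbours produced are distinct.
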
 

\begin{proof} \text{(i)} Let $e \in \Gamma^{-1}(J)$. We show that $e$ has exactly two 4-adjacent points in $\Gamma^{-1}(J)$. Since $J$ is a Jordan curve and $\Gamma(e) \in J$, it follows that $\lvert A(e) \cap J \rvert = 2$. By Proposition \ref{4adyacentes}, we conclude that $e$ has exactly two 4-adjacent points in $\Gamma^{-1}(J)$. 

\text{(ii)} To prove that $\Gamma^{-1}(J)$ is an 8-curve, it suffices to show that every point $x \in \Gamma^{-1}(J)$ has exactly two 8-adjacent points. Since $\Gamma(x) \in J$ and $J$ is a Jordan curve, we have $\lvert A(\Gamma(x)) \cap J \rvert = 2$. Following an analogous reasoning to Proposition \ref{suficiente_8}, case 1, we deduce that $x$ has exactly two 8-adjacent points in $\Gamma^{-1}(J)$. Therefore, $\Gamma^{-1}(J)$ is an 8-curve. 

\end{proof}

We now analyze  $\Gamma(C)$ when  $C$ is  an 8-path in $\mathbb{Z}^2$.

\begin{figure}[h]
    \centering
\tikzset{every picture/.style={line width=0.75pt}} 

\begin{tikzpicture}[x=0.75pt,y=0.75pt,yscale=-1,xscale=1]

\draw  [fill={rgb, 255:red, 255; green, 255; blue, 255 }  ,fill opacity=1 ] (57.8,60.53) -- (74.3,60.53) -- (74.3,72.19) -- (57.8,72.19) -- cycle ;
\draw  [fill={rgb, 255:red, 255; green, 255; blue, 255 }  ,fill opacity=1 ] (138.06,61.9) -- (154.55,61.9) -- (154.55,73.57) -- (138.06,73.57) -- cycle ;
\draw  [fill={rgb, 255:red, 255; green, 255; blue, 255 }  ,fill opacity=1 ] (61.56,133.28) -- (78.06,133.28) -- (78.06,144.95) -- (61.56,144.95) -- cycle ;
\draw  [color={rgb, 255:red, 0; green, 0; blue, 0 }  ,draw opacity=1 ][fill={rgb, 255:red, 255; green, 255; blue, 255 }  ,fill opacity=1 ][line width=0.75]  (98.52,107.67) .. controls (98.52,103.64) and (102,100.38) .. (106.28,100.38) .. controls (110.57,100.38) and (114.04,103.64) .. (114.04,107.67) .. controls (114.04,111.69) and (110.57,114.96) .. (106.28,114.96) .. controls (102,114.96) and (98.52,111.69) .. (98.52,107.67) -- cycle ;
\draw  [fill={rgb, 255:red, 255; green, 255; blue, 255 }  ,fill opacity=1 ] (138.29,133) -- (154.78,133) -- (154.78,144.67) -- (138.29,144.67) -- cycle ;
\draw  [color={rgb, 255:red, 0; green, 0; blue, 0 }  ,draw opacity=1 ][fill={rgb, 255:red, 255; green, 255; blue, 255 }  ,fill opacity=1 ][line width=0.75]  (174.21,106.46) .. controls (174.21,102.44) and (177.69,99.18) .. (181.97,99.18) .. controls (186.26,99.18) and (189.74,102.44) .. (189.74,106.46) .. controls (189.74,110.49) and (186.26,113.75) .. (181.97,113.75) .. controls (177.69,113.75) and (174.21,110.49) .. (174.21,106.46) -- cycle ;
\draw  [fill={rgb, 255:red, 0; green, 0; blue, 0 }  ,fill opacity=1 ] (108.83,54.11) -- (111.83,61.6) -- (118.52,62.8) -- (113.68,68.63) -- (114.82,76.87) -- (108.83,72.98) -- (102.85,76.87) -- (103.99,68.63) -- (99.15,62.8) -- (105.84,61.6) -- cycle ;
\draw  [color={rgb, 255:red, 0; green, 0; blue, 0 }  ,draw opacity=1 ][fill={rgb, 255:red, 0; green, 0; blue, 0 }  ,fill opacity=1 ] (179.54,54.85) -- (182.53,62.34) -- (189.23,63.54) -- (184.38,69.37) -- (185.53,77.61) -- (179.54,73.72) -- (173.55,77.61) -- (174.7,69.37) -- (169.85,63.54) -- (176.55,62.34) -- cycle ;
\draw  [fill={rgb, 255:red, 0; green, 0; blue, 0 }  ,fill opacity=1 ] (67.44,93.64) -- (70.43,101.13) -- (77.13,102.33) -- (72.28,108.16) -- (73.43,116.4) -- (67.44,112.51) -- (61.45,116.4) -- (62.6,108.16) -- (57.75,102.33) -- (64.45,101.13) -- cycle ;
\draw  [fill={rgb, 255:red, 0; green, 0; blue, 0 }  ,fill opacity=1 ] (143.98,92.67) -- (146.97,100.16) -- (153.67,101.36) -- (148.82,107.19) -- (149.97,115.43) -- (143.98,111.54) -- (137.99,115.43) -- (139.14,107.19) -- (134.29,101.36) -- (140.99,100.16) -- cycle ;
\draw  [fill={rgb, 255:red, 0; green, 0; blue, 0 }  ,fill opacity=1 ] (102.48,125.45) -- (105.47,132.94) -- (112.16,134.14) -- (107.32,139.97) -- (108.46,148.21) -- (102.48,144.32) -- (96.49,148.21) -- (97.63,139.97) -- (92.79,134.14) -- (99.48,132.94) -- cycle ;
\draw  [fill={rgb, 255:red, 0; green, 0; blue, 0 }  ,fill opacity=1 ] (176.96,124.41) -- (179.96,131.9) -- (186.65,133.11) -- (181.81,138.94) -- (182.95,147.17) -- (176.96,143.28) -- (170.98,147.17) -- (172.12,138.94) -- (167.28,133.11) -- (173.97,131.9) -- cycle ;
\draw  [color={rgb, 255:red, 74; green, 144; blue, 226 }  ,draw opacity=1 ][dash pattern={on 1.69pt off 2.76pt}][line width=1.5]  (95.98,98.35) .. controls (95.98,95.81) and (98.04,93.75) .. (100.58,93.75) -- (187.38,93.75) .. controls (189.92,93.75) and (191.98,95.81) .. (191.98,98.35) -- (191.98,112.15) .. controls (191.98,114.69) and (189.92,116.75) .. (187.38,116.75) -- (100.58,116.75) .. controls (98.04,116.75) and (95.98,114.69) .. (95.98,112.15) -- cycle ;
\draw  [color={rgb, 255:red, 126; green, 211; blue, 33 }  ,draw opacity=1 ][dash pattern={on 1.69pt off 2.76pt}][line width=1.5]  (154.8,57.53) .. controls (157.78,57.55) and (160.17,59.99) .. (160.15,62.97) -- (159.47,149.81) .. controls (159.45,152.79) and (157.01,155.19) .. (154.03,155.16) -- (137.85,155.04) .. controls (134.87,155.01) and (132.47,152.58) .. (132.49,149.6) -- (133.18,62.76) .. controls (133.2,59.78) and (135.63,57.38) .. (138.61,57.4) -- cycle ;
\draw  [fill={rgb, 255:red, 255; green, 255; blue, 255 }  ,fill opacity=1 ] (356.64,39.19) -- (373.14,39.19) -- (373.14,50.86) -- (356.64,50.86) -- cycle ;
\draw  [fill={rgb, 255:red, 255; green, 255; blue, 255 }  ,fill opacity=1 ] (355.06,108.9) -- (371.55,108.9) -- (371.55,120.57) -- (355.06,120.57) -- cycle ;
\draw  [color={rgb, 255:red, 0; green, 0; blue, 0 }  ,draw opacity=1 ][fill={rgb, 255:red, 255; green, 255; blue, 255 }  ,fill opacity=1 ][line width=0.75]  (320.52,76.67) .. controls (320.52,72.64) and (324,69.38) .. (328.28,69.38) .. controls (332.57,69.38) and (336.04,72.64) .. (336.04,76.67) .. controls (336.04,80.69) and (332.57,83.96) .. (328.28,83.96) .. controls (324,83.96) and (320.52,80.69) .. (320.52,76.67) -- cycle ;
\draw  [color={rgb, 255:red, 0; green, 0; blue, 0 }  ,draw opacity=1 ][fill={rgb, 255:red, 255; green, 255; blue, 255 }  ,fill opacity=1 ][line width=0.75]  (395.21,145.46) .. controls (395.21,141.44) and (398.69,138.18) .. (402.97,138.18) .. controls (407.26,138.18) and (410.74,141.44) .. (410.74,145.46) .. controls (410.74,149.49) and (407.26,152.75) .. (402.97,152.75) .. controls (398.69,152.75) and (395.21,149.49) .. (395.21,145.46) -- cycle ;
\draw  [fill={rgb, 255:red, 0; green, 0; blue, 0 }  ,fill opacity=1 ] (363.83,64.4) -- (366.83,71.89) -- (373.52,73.09) -- (368.68,78.92) -- (369.82,87.16) -- (363.83,83.27) -- (357.85,87.16) -- (358.99,78.92) -- (354.15,73.09) -- (360.84,71.89) -- cycle ;
\draw  [color={rgb, 255:red, 0; green, 0; blue, 0 }  ,draw opacity=1 ][fill={rgb, 255:red, 0; green, 0; blue, 0 }  ,fill opacity=1 ] (395.54,30.85) -- (398.53,38.34) -- (405.23,39.54) -- (400.38,45.37) -- (401.53,53.61) -- (395.54,49.72) -- (389.55,53.61) -- (390.7,45.37) -- (385.85,39.54) -- (392.55,38.34) -- cycle ;
\draw  [fill={rgb, 255:red, 0; green, 0; blue, 0 }  ,fill opacity=1 ] (324.44,101.64) -- (327.43,109.13) -- (334.13,110.33) -- (329.28,116.16) -- (330.43,124.4) -- (324.44,120.51) -- (318.45,124.4) -- (319.6,116.16) -- (314.75,110.33) -- (321.45,109.13) -- cycle ;
\draw  [fill={rgb, 255:red, 0; green, 0; blue, 0 }  ,fill opacity=1 ] (400.98,100.67) -- (403.97,108.16) -- (410.67,109.36) -- (405.82,115.19) -- (406.97,123.43) -- (400.98,119.54) -- (394.99,123.43) -- (396.14,115.19) -- (391.29,109.36) -- (397.99,108.16) -- cycle ;
\draw  [fill={rgb, 255:red, 0; green, 0; blue, 0 }  ,fill opacity=1 ] (365.48,129.45) -- (368.47,136.94) -- (375.16,138.14) -- (370.32,143.97) -- (371.46,152.21) -- (365.48,148.32) -- (359.49,152.21) -- (360.63,143.97) -- (355.79,138.14) -- (362.48,136.94) -- cycle ;
\draw  [fill={rgb, 255:red, 0; green, 0; blue, 0 }  ,fill opacity=1 ] (325.96,30.41) -- (328.96,37.9) -- (335.65,39.11) -- (330.81,44.94) -- (331.95,53.17) -- (325.96,49.28) -- (319.98,53.17) -- (321.12,44.94) -- (316.28,39.11) -- (322.97,37.9) -- cycle ;
\draw  [color={rgb, 255:red, 74; green, 144; blue, 226 }  ,draw opacity=1 ][dash pattern={on 1.69pt off 2.76pt}][line width=1.5]  (315.83,70.08) .. controls (315.83,67.54) and (317.89,65.48) .. (320.43,65.48) -- (407.23,65.48) .. controls (409.78,65.48) and (411.83,67.54) .. (411.83,70.08) -- (411.83,83.88) .. controls (411.83,86.42) and (409.78,88.48) .. (407.23,88.48) -- (320.43,88.48) .. controls (317.89,88.48) and (315.83,86.42) .. (315.83,83.88) -- cycle ;
\draw  [color={rgb, 255:red, 126; green, 211; blue, 33 }  ,draw opacity=1 ][dash pattern={on 1.69pt off 2.76pt}][line width=1.5]  (372.31,28.23) .. controls (375.29,28.25) and (377.69,30.69) .. (377.66,33.66) -- (376.98,120.51) .. controls (376.96,123.49) and (374.52,125.88) .. (371.54,125.86) -- (355.36,125.73) .. controls (352.38,125.71) and (349.98,123.27) .. (350.01,120.29) -- (350.69,33.45) .. controls (350.71,30.47) and (353.15,28.08) .. (356.13,28.1) -- cycle ;
\draw  [color={rgb, 255:red, 0; green, 0; blue, 0 }  ,draw opacity=1 ][fill={rgb, 255:red, 255; green, 255; blue, 255 }  ,fill opacity=1 ][line width=0.75]  (389.21,77.46) .. controls (389.21,73.44) and (392.69,70.18) .. (396.97,70.18) .. controls (401.26,70.18) and (404.74,73.44) .. (404.74,77.46) .. controls (404.74,81.49) and (401.26,84.75) .. (396.97,84.75) .. controls (392.69,84.75) and (389.21,81.49) .. (389.21,77.46) -- cycle ;
\draw  [color={rgb, 255:red, 0; green, 0; blue, 0 }  ,draw opacity=1 ][fill={rgb, 255:red, 255; green, 255; blue, 255 }  ,fill opacity=1 ][line width=0.75]  (317.21,142.46) .. controls (317.21,138.44) and (320.69,135.18) .. (324.97,135.18) .. controls (329.26,135.18) and (332.74,138.44) .. (332.74,142.46) .. controls (332.74,146.49) and (329.26,149.75) .. (324.97,149.75) .. controls (320.69,149.75) and (317.21,146.49) .. (317.21,142.46) -- cycle ;
\draw [color={rgb, 255:red, 126; green, 211; blue, 33 }  ,draw opacity=1 ][line width=2.25]  [dash pattern={on 2.53pt off 3.02pt}]  (447,81) -- (464.19,80.48) ;
\draw [color={rgb, 255:red, 74; green, 144; blue, 226 }  ,draw opacity=1 ][line width=2.25]  [dash pattern={on 2.53pt off 3.02pt}]  (449,98) -- (466.19,97.48) ;

\draw (77.77,83.8) node [anchor=north west][inner sep=0.75pt]  [font=\scriptsize]  {$\Gamma ( x_{0})$};
\draw (26.79,44.02) node [anchor=north west][inner sep=0.75pt]  [font=\scriptsize]  {$\Gamma ( x_{1} )$};
\draw (140.06,76.97) node [anchor=north west][inner sep=0.75pt]  [font=\scriptsize]  {$u$};
\draw (189.38,97.15) node [anchor=north west][inner sep=0.75pt]  [font=\scriptsize]  {$u'$};
\draw (334.77,91.8) node [anchor=north west][inner sep=0.75pt]  [font=\scriptsize]  {$\Gamma ( x_{0})$};
\draw (283.79,63.02) node [anchor=north west][inner sep=0.75pt]  [font=\scriptsize]  {$\Gamma ( x_{1} )$};
\draw (482,76.4) node [anchor=north west][inner sep=0.75pt]  [font=\scriptsize]  {$cl( u)$};
\draw (483,94.4) node [anchor=north west][inner sep=0.75pt]  [font=\scriptsize]  {$N( u)$};
\draw (403.83,62.48) node [anchor=north west][inner sep=0.75pt]  [font=\scriptsize]  {$u'$};
\draw (118,173) node [anchor=north west][inner sep=0.75pt]  [font=\large] [align=left] {A};
\draw (362,174) node [anchor=north west][inner sep=0.75pt]  [font=\large] [align=left] {B};

\end{tikzpicture}
    \caption{Case 1 in the proof of Proposition \ref{puntosfinales}}
    \label{(u,v)mixto}
\end{figure}
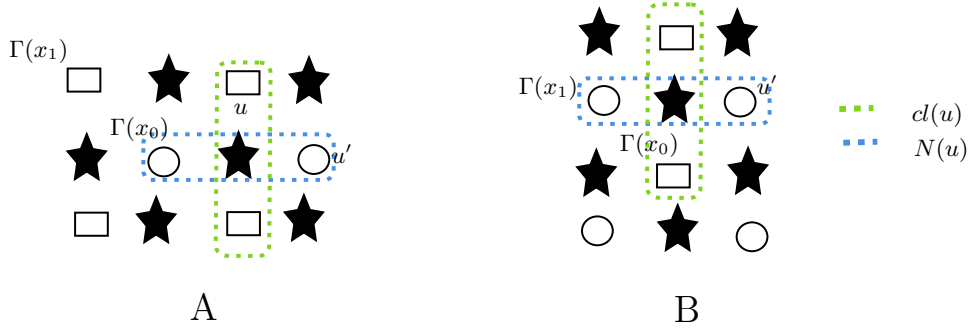

\begin{propo}
\label{puntosfinales} 
Let $C$ be an 8-path and $x$ an 8-endpoint of $C$. Then $\lvert A(\Gamma(x)) \cap \Gamma^{*}(C) \rvert = 1$. \end{propo}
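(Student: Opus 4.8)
Fix the $8$-endpoint $x_{0}:=x$ of $C$ and let $x_{1}$ be its unique $8$-adjacent point in $C$. The plan is to exhibit a single element $t\in A(\Gamma(x_{0}))\cap\Gamma^{*}(C)$ and then prove it is the only one. The natural candidate is $t=\Gamma(x_{1})$ when $x_{1}$ is $4$-adjacent to $x_{0}$, and, when $x_{1}$ is $8$-adjacent but not $4$-adjacent to $x_{0}$, the unique mixed point with $\Gamma(x_{0}),\Gamma(x_{1})\in A(t)$ furnished by Proposition \ref{4adyacentes}(2).

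First I would verify that this $t$ lies in $A(\Gamma(x_{0}))\cap\Gamma^{*}(C)$ in both situations. If $x_{1}$ is $4$-adjacent to $x_{0}$, then $\Gamma(x_{1})\in A(\Gamma(x_{0}))$ by Proposition \ref{4adyacentes}(1), and $\Gamma(x_{1})\in\Gamma(C)\subseteq\Gamma^{*}(C)$. If $x_{1}$ is not $4$-adjacent to $x_{0}$, then $t\in A(\Gamma(x_{0}))$ directly from Proposition \ref{4adyacentes}(2); moreover $\Gamma(x_{0})$ and $\Gamma(x_{1})$ are non-adjacent members of $A(t)$ with $t$ mixed, so the argument from the proof of Proposition \ref{1contenencia} (a curve, or here a path, cannot turn at a mixed point) gives $\{\Gamma(x_{0}),\Gamma(x_{1}),t\}=N(t)$ or $=cl(t)$, whence $N(t)\subseteq\Gamma(C)\cup\{t\}$ or $cl(t)\subseteq\Gamma(C)\cup\{t\}$, i.e.\ $t\in\Gamma^{*}(C)$ by \eqref{defoperador*}.

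For uniqueness, take any $s\in A(\Gamma(x_{0}))\cap\Gamma^{*}(C)$; I would show $s=t$. If $s$ is pure, then $s\in\Gamma(C)$ (the pure points of $\Gamma^{*}(C)$ are exactly $\Gamma(C)$), say $s=\Gamma(y)$ with $y\in C$, and Proposition \ref{4adyacentes}(1) makes $y$ both $4$-adjacent and hence $8$-adjacent to $x_{0}$, forcing $y=x_{1}$ and $s=t$. If $s$ is mixed, then $\Gamma(x_{0})\in N(s)\cup cl(s)$; assume $\Gamma(x_{0})\in N(s)$, the case $\Gamma(x_{0})\in cl(s)$ being entirely symmetric (these are the two parts of Figure \ref{(u,v)mixto}). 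Writing $N(s)\setminus\{s\}=\{\Gamma(x_{0}),p'\}$ and $cl(s)\setminus\{s\}=\{q_{1},q_{2}\}$, the local picture of a mixed point (Figure \ref{adyacencias_plano}), together with Propositions \ref{dospuntosmixtos} and \ref{4adyacentes}, shows that $\Gamma^{-1}(p')$ is $8$-adjacent but not $4$-adjacent to $x_{0}$, while $\Gamma^{-1}(q_{1})$ and $\Gamma^{-1}(q_{2})$ are two distinct points $4$-adjacent to $x_{0}$. Now $s\in\Gamma^{*}(C)$ forces $p'\in\Gamma(C)$ or $\{q_{1},q_{2}\}\subseteq\Gamma(C)$. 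The second alternative would give $x_{0}$ two distinct $8$-adjacent points in $C$, contradicting that $x_{0}$ is an $8$-endpoint; the first alternative gives a diagonal $8$-adjacent point $\Gamma^{-1}(p')\in C$, which must therefore equal $x_{1}$, so that $s$ is the unique mixed point adjacent to both $\Gamma(x_{0})$ and $\Gamma(x_{1})$, i.e.\ $s=t$. Hence $A(\Gamma(x_{0}))\cap\Gamma^{*}(C)=\{t\}$ and $\lvert A(\Gamma(x_{0}))\cap\Gamma^{*}(C)\rvert=1$.

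I expect the main obstacle to be the mixed-point case of the uniqueness argument: one must classify, via Proposition \ref{4adyacentes}, the four pure neighbours of a mixed point $s$ adjacent to $\Gamma(x_{0})$ according to whether they correspond under $\Gamma^{-1}$ to $4$-adjacent or to diagonal $8$-adjacent points of $x_{0}$, and then exploit the single-$8$-neighbour property of the endpoint to eliminate every possibility except $s=t$. Everything else is bookkeeping with Proposition \ref{4adyacentes} and the definition \eqref{defoperador*} of $\Gamma^{*}$.
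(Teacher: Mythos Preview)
Your proposal is correct and follows essentially the same approach as the paper: exhibit a single element $t$ of $A(\Gamma(x_0))\cap\Gamma^{*}(C)$ depending on whether $x_1$ is $4$-adjacent or diagonally $8$-adjacent to $x_0$, then rule out any further element $s$ by splitting into the pure and mixed cases and invoking Proposition~\ref{4adyacentes} together with the endpoint property of $x_0$. The paper organizes the argument with the outer case split on the type of adjacency of $x_1$, whereas you put the outer split on whether $s$ is pure or mixed; the underlying reasoning is the same. One small presentational point: your conclusions ``forcing $y=x_1$ and $s=t$'' and ``$\Gamma^{-1}(p')=x_1$, i.e.\ $s=t$'' are literally valid only in the matching case for $x_1$ (four-adjacent in the first, diagonal in the second); in the opposite case the forced equality contradicts the adjacency type of $x_1$ and rules out $s$ outright. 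Making this explicit would tighten the write-up, but there is no genuine gap.
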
 \begin{proof} Let $C = \{x_0, x_1, \ldots, x_{n-1}, x_n\}$ be an 8-path with 8-endpoints $x_0$ and $x_n$. We analyze the situation for $x_0$; the other endpoint is treated similarly. We consider two cases: 

\medskip

\noindent\textbf{Case 1:} Suppose $x_1$ is 4-adjacent to $x_0$. It follows that $\Gamma(x_1) \in A(\Gamma(x_0)) \cap \Gamma^{*}(C)$ (see Proposition \ref{4adyacentes}). Assume, by contradiction, that $\lvert A(\Gamma(x_0)) \cap \Gamma^{*}(C) \rvert > 1$ and let $u \in A(\Gamma(x_0)) \cap \Gamma^{*}(C)$ such that $u \neq \Gamma(x_1)$. We analyze two subcases: 

\medskip 

\begin{itemize}
\item[]\hspace{-1.2cm}\textbf{1.1:} If $u$ is pure, then $\Gamma^{-1}(u) \in C$ and $u$ is 8-adjacent to $x_0$ (see Proposition \ref{4adyacentes}). Since $\Gamma^{-1}(u) \neq x_1$, $x_0$ has two 8-adjacent points, contradicting the assumption that $x_0$ is an 8-endpoint of $C$. 

\item[]\hspace{-1.2cm}\textbf{1.2:} If $u$ is mixed, then $N(u) \subseteq \Gamma(C) \cup \{u\}$ or $cl(u) \subseteq \Gamma(C) \cup \{u\}$ (see Figure \ref{(u,v)mixto}). We assume $N(u) \subseteq \Gamma(C) \cup \{u\}$; the other case is treated similarly. There exists a pure point $u'$ in $N(u)$ such that $u' \neq \Gamma(x_1)$ or $u' \neq \Gamma(x_0)$, with $u' \in \Gamma(C)$. As $\Gamma(x_0)$ and $u'$ are not adjacent and two pure points share at most two mixed adjacent points, Proposition \ref{dospuntosmixtos} implies $u$ is the only mixed point adjacent to $\Gamma(x_0)$ and $u'$. Hence, $\Gamma^{-1}(u')$ is 8-adjacent to $x_0$, contradicting that $x_0$ is an 8-endpoint. \end{itemize} 

Thus, $\lvert A(\Gamma(x_0)) \cap \Gamma^{*}(C) \rvert = 1$ in this case. 

\medskip 

\noindent\textbf{Case 2:} Suppose $x_1$ is 8-adjacent but not 4-adjacent to $x_0$. By Proposition \ref{4adyacentes}, there exists a unique mixed point $u$ such that $\Gamma(x_1), \Gamma(x_0) \in A(u)$. Since $\Gamma(x_1)$ and $\Gamma(x_0)$ are not adjacent, $N(u) = \{\Gamma(x_1), \Gamma(x_0), u\}$ or $cl(u) = \{\Gamma(x_1), \Gamma(x_0), u\}$, implying $u \in \Gamma^{*}(C)$. Following reasoning analogous to Case 1, it can be verified that $\lvert A(\Gamma(x_0)) \cap \Gamma^{*}(C) \rvert = 1$. 

\end{proof}
 
\begin{propo}
\label{puntosnofinales}
Let $C$ be an 8-path with 8-endpoints $x_0$ and $x_n$. If $z \in \Gamma^{*}(C)$ is such that $z \neq \Gamma(x_0)$ and $z \neq \Gamma(x_n)$, then $|A(z) \cap \Gamma^{*}(C)| = 2$.
\end{propo}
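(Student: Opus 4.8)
The plan is to mimic the structure of Proposition~\ref{puntosfinales}, splitting on whether $z$ is a pure point of $\Gamma^*(C)$ or a mixed point, and in the pure case on how the curve $C$ passes through $\Gamma^{-1}(z)$.

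\textbf{Setup and the pure case.} Write $C=\{x_0,x_1,\dots,x_n\}$, an $8$-path. Suppose first that $z=\Gamma(x_i)$ is pure with $0<i<n$, so that $x_i$ has exactly two $8$-adjacent points in $C$, namely $x_{i-1}$ and $x_{i+1}$. For each neighbour $x_{i\pm1}$, Proposition~\ref{4adyacentes} gives either that $x_{i\pm1}$ is $4$-adjacent to $x_i$, in which case $\Gamma(x_{i\pm1})\in A(\Gamma(x_i))\cap\Gamma^*(C)$; or that $x_{i\pm1}$ is $8$- but not $4$-adjacent to $x_i$, in which case there is a unique mixed $u$ with $\Gamma(x_i),\Gamma(x_{i\pm1})\in A(u)$, and since these two pure points are nonadjacent we get $N(u)=\{\Gamma(x_i),\Gamma(x_{i\pm1}),u\}$ or $cl(u)=\{\Gamma(x_i),\Gamma(x_{i\pm1}),u\}$, hence $u\in\Gamma^*(C)$ and $u\in A(\Gamma(x_i))$. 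Either way each neighbour contributes exactly one element of $A(z)\cap\Gamma^*(C)$, and the two contributions are distinct (injectivity of $\Gamma$ handles the pure contributions; a mixed $u$ coming from $x_{i-1}$ cannot coincide with one from $x_{i+1}$ because $u$ determines the pair $\{\Gamma(x_i),\Gamma(x_{i\pm1})\}$ up to the two possible nonadjacent pure points it links). So $|A(z)\cap\Gamma^*(C)|\ge2$. The reverse inequality $|A(z)\cap\Gamma^*(C)|\le2$ is exactly the content of Case~1 of the proof of Proposition~\ref{suficiente_8}: the argument there shows that any putative third element of $A(\Gamma(x_i))\cap\Gamma^*(C)$ — whether pure (then a third $8$-neighbour of $x_i$ in $C$, contradiction) or mixed (then producing a third $8$-neighbour via Proposition~\ref{4adyacentes} and the $N(u)/cl(u)$ membership, again a contradiction) — cannot exist. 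I would cite that proof rather than repeat it.

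\textbf{The mixed case.} Now suppose $z$ is a mixed point of $\Gamma^*(C)$; by definition of $\Gamma^*$ (equation~\eqref{defoperador*}) either $N(z)\subseteq\Gamma(C)\cup\{z\}$ or $cl(z)\subseteq\Gamma(C)\cup\{z\}$; assume $N(z)\subseteq\Gamma(C)\cup\{z\}$, the other case being symmetric. For a mixed point, $N(z)=\{z,a,b\}$ with $a,b$ the two pure points collinear with $z$ in the open direction (see Figure~\ref{adyacencias_plano}), and $a,b\in A(z)$; since $a,b\in\Gamma(C)$ we already have two elements of $A(z)\cap\Gamma^*(C)$. To see there are no others, note $A(z)=N(z)\cup cl(z)$ by Proposition~\ref{arcosycaminos1}, so the only candidates beyond $a,b$ are the two pure points $a',b'$ in $cl(z)$ (the perpendicular direction). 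If, say, $a'\in\Gamma^*(C)$, then $a'\in\Gamma(C)$ (it is pure, so by Proposition~\ref{1contenencia}(i)-style reasoning it lies in $\Gamma(C)$), and then $\Gamma^{-1}(a')$ and $\Gamma^{-1}(a)$ are two points of $C$: $\Gamma^{-1}(a)$ and $\Gamma^{-1}(b)$ are $8$-adjacent via the mixed point $z$, and similarly one checks $\Gamma^{-1}(a')$ relates to the others; the key point is that $a$ and $b$ being both in $N(z)$ forces $\Gamma^{-1}(a),\Gamma^{-1}(b)$ to be a $4$-adjacent pair (or an $8$-adjacent non-$4$-adjacent pair realized by $z$), and adding $a'$ would make some point of $C$ acquire three $8$-neighbours. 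I would run this through Proposition~\ref{4adyacentes} carefully to derive the contradiction with $C$ being an $8$-path.

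\textbf{Expected main obstacle.} The genuinely delicate part is the mixed case: translating "$N(z)\subseteq\Gamma(C)\cup\{z\}$" into a statement about $8$-adjacencies among the three points $\Gamma^{-1}$ of the pure points in $N(z)\cup cl(z)$, and then ruling out a fourth adjacency, requires bookkeeping with the two geometric types of mixed points and the precise form of $A(\cdot)$ from Figure~\ref{adyacencias_plano}. In particular one must be careful that when both $a,b\in N(z)$ the corresponding grid points are $8$-adjacent-but-not-$4$-adjacent with $z$ as their unique common mixed neighbour, so $z$ itself already "uses up" the adjacency budget, and any pure point of $cl(z)\cap\Gamma(C)$ would be $4$-adjacent to one of them, contradicting the $8$-path hypothesis. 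Making that step rigorous — rather than just intuitively clear from the picture — is where most of the work lies; the pure case is essentially already done inside Proposition~\ref{suficiente_8}.
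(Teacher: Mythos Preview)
Your proposal is correct and follows essentially the same approach as the paper: split on whether $z$ is pure or mixed, and in each case use the $8$-path structure together with Proposition~\ref{4adyacentes} to bound $|A(z)\cap\Gamma^*(C)|$. One small correction: your citation of ``Case~1 of the proof of Proposition~\ref{suficiente_8}'' for the upper bound in the pure case is not quite right, since that proposition runs in the opposite logical direction (from arc to $8$-path, deriving contradictions with $|A(\Gamma(x))\cap C|=2$ rather than with the $8$-path condition); however, the argument you sketch in parentheses is the correct one and is exactly what the paper does in its Case~2.
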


\begin{proof}
Let $C = \{x_0, x_1, \ldots, x_{n-1}, x_n\}$ be an 8-path with 8-endpoints $x_0$ and $x_n$. We consider two cases.

\medskip

\noindent\textbf{Case 1:} Assume $z$ is mixed. Then, either $cl(z) \subseteq \Gamma(C) \cup \{z\}$ or $N(z) \subseteq \Gamma(C) \cup \{z\}$. Thus, there exists $i$ such that $\{\Gamma(x_i), \Gamma(x_{i+1})\} \subseteq N(z)$ or $\{\Gamma(x_i), \Gamma(x_{i+1})\} \subseteq cl(z)$. Since $\Gamma(C) \subseteq \Gamma^{*}(C)$, it follows that $|A(z) \cap \Gamma^{*}(C)| \geq 2$. Let $u \in A(z) \cap \Gamma^{*}(C)$. Because $z$ is mixed, $u$ is pure, which implies $u \in \Gamma(C)$. Let $0 \leq j \leq n$ be such that $u = \Gamma(x_j)$. We will show that $j = i$ or $j = i + 1$. Suppose otherwise. Various alternatives need to be considered, and in each, $x_j$ would have more 8-adjacent points than allowed, contradicting the fact that $C$ is an 8-path. Since $\Gamma(x_j) \in A(\Gamma(x_i)) \cap A(\Gamma(x_{i+1}))$, $x_j$ must be 4-adjacent to both $x_i$ and $x_{i+1}$ (by Proposition \ref{4adyacentes}). If $0 < j < i$, then $x_{j-1}$, $x_i$, and $x_{i+1}$ would all be 8-adjacent to $x_j$, which is impossible. The remaining cases are handled analogously.

\medskip

\noindent\textbf{Case 2:} Assume $z$ is a pure point. Then $\Gamma(x_i) = z$ for some $i \neq 0$ and $i \neq n$. Since $C$ is an 8-path, $x_{i-1}$ and $x_{i+1}$ are 8-adjacent to $x_i$. We analyze the following subcases:

\medskip 

\begin{itemize}
\item[] \hspace{-1.2cm}\textbf{2.1:} Suppose $x_i$ is 4-adjacent to both $x_{i-1}$ and $x_{i+1}$. By Proposition \ref{4adyacentes}, $\Gamma(x_{i-1}), \Gamma(x_{i+1}) \in A(z)$, so $|A(z) \cap \Gamma^{*}(C)| \geq 2$. Assume there exists $y \in A(z) \cap \Gamma^{*}(C)$ with $y \notin \{\Gamma(x_{i-1}), \Gamma(x_{i+1})\}$. Several cases have to be considered, and in all of them, $x_i$ would have more 8-adjacent points than allowed for an 8-path. If $y$ is pure, then $y = \Gamma(x_j)$, and $x_j$ would be 4-adjacent to $x_i$ by Proposition \ref{4adyacentes}, giving $x_i$ at least 3 8-adjacent points, which is impossible. If $y$ is mixed, then either $cl(y) \subseteq \Gamma(C) \cup \{y\}$ or $N(y) \subseteq \Gamma(C) \cup \{y\}$. In both cases, a pure point $\Gamma(x_j) \in A(z)$ would exist with $j \notin \{i-1, i+1\}$, which is not possible.

\medskip

\item[]\hspace{-1.2cm}\textbf{2.2:} Suppose $x_i$ is 8-adjacent to both $x_{i-1}$ and $x_{i+1}$ but not 4-adjacent to either. By Proposition \ref{4adyacentes}, there exist mixed points $u$ and $u'$ such that $\Gamma(x_i), \Gamma(x_{i+1}) \in A(u)$ and $\Gamma(x_{i-1}), \Gamma(x_i) \in A(u')$. Since $x_i$ is not 4-adjacent to either $x_{i-1}$ or $x_{i+1}$, we have:
\begin{itemize}
\item[] $cl(u) = \{\Gamma(x_i), \Gamma(x_{i+1}), u\}$ or $N(u) = \{\Gamma(x_i), \Gamma(x_{i+1}), u\}$.
\item[] $cl(u') = \{\Gamma(x_i), \Gamma(x_{i-1}), u'\}$ or $N(u') = \{\Gamma(x_i), \Gamma(x_{i-1}), u'\}$.
\end{itemize}
Thus, $u, u' \in \Gamma^{*}(C)$, so $|A(z) \cap \Gamma^{*}(C)| \geq 2$. If $|A(z) \cap \Gamma^{*}(C)| > 2$, Proposition \ref{4adyacentes} implies that $x_i$ would have three  8-adjacent points in $C$, contradicting the assumption that $C$ is an 8-path. Therefore, $|A(z) \cap \Gamma^{*}(C)| = 2$.

\medskip

\item[]\hspace{-1.2cm}\textbf{2.3:} Suppose $x_i$ is not 4-adjacent to $x_{i-1}$ but is 4-adjacent to $x_{i+1}$. Recall that $x_i$ is 8-adjacent to both $x_{i-1}$ and $x_{i+1}$. Using the same reasoning as in the previous case, there exists a mixed point $u$ such that $u \in A(z) \cap \Gamma^{*}(C)$, and $\Gamma(x_{i-1}) \in A(z) \cap \Gamma^{*}(C)$. Thus, $|A(z) \cap \Gamma^{*}(C)| \geq 2$. Following analogous reasoning as in earlier cases, we conclude that $|A(z) \cap \Gamma^{*}(C)| = 2$.
\end{itemize}
\end{proof}

We can now present a  main result of this section, its counterpart will be Theorem \ref{8caminos-gamma*}.

\begin{teo}
\label{camino*}
(i) $C$ is an 8-path in $\mathbb{Z}^2$ if and only if $\Gamma^{*}(C)$ is an arc in $\mathbb{K}^2$ with pure endpoints.

(ii) $C$ is a closed   8-curve  in $\mathbb{Z}^2$ if and only if $\Gamma^{*}(C)$ is a Jordan curve in $\mathbb{K}^2$.
\end{teo}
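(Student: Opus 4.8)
## Proof Plan for Theorem \ref{camino*}

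The plan is to assemble the theorem from the technical propositions already proved, so that the reverse implications become essentially immediate and the forward implications reduce to a connectivity check plus the adjacency counts of Propositions \ref{puntosfinales} and \ref{puntosnofinales}.

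\emph{Reverse implications.} For (i), suppose $\Gamma^{*}(C)$ is an arc and set $D=\Gamma^{*}(C)$. Recalling $\Gamma^{-1}(\Gamma^{*}(B))=B$ for every $B\subseteq\mathbb{Z}^2$, we get $\Gamma^{-1}(D)=C$ and $\Gamma^{*}(\Gamma^{-1}(D))=\Gamma^{*}(C)=D$, so Proposition \ref{suficiente_8} gives that $\Gamma^{-1}(D)=C$ is an $8$-path. (The hypothesis that the endpoints of $\Gamma^{*}(C)$ are pure is automatic: $\Gamma^{*}(\Gamma^{-1}(\Gamma^{*}(C)))=\Gamma^{*}(C)$, and by Proposition \ref{1contenencia}(iii) a mixed endpoint of an arc $C'$ cannot lie in $\Gamma^{*}(\Gamma^{-1}(C'))$.) For (ii), if $J=\Gamma^{*}(C)$ is a Jordan curve, the same identity gives $\Gamma^{-1}(J)=C$ and $\Gamma^{*}(\Gamma^{-1}(J))=J$, so Proposition \ref{8-curva}(ii) makes $\Gamma^{-1}(J)=C$ a closed $8$-curve.

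\emph{Forward implication of (i).} Write $C=\{x_0,x_1,\ldots,x_n\}$ with $8$-endpoints $x_0$ and $x_n$. First I would check that $\Gamma^{*}(C)$ is connected: for each $i$, if $x_i$ and $x_{i+1}$ are $4$-adjacent then $\Gamma(x_i),\Gamma(x_{i+1})$ are adjacent in $\mathbb{K}^2$ (Proposition \ref{4adyacentes}), while if they are $8$- but not $4$-adjacent there is a mixed point $u_i$ with $\Gamma(x_i),\Gamma(x_{i+1})\in A(u_i)$, and since these two are nonadjacent, $N(u_i)$ or $cl(u_i)$ equals $\{\Gamma(x_i),\Gamma(x_{i+1}),u_i\}$, whence $u_i\in\Gamma^{*}(C)$; thus consecutive images are joined inside $\Gamma^{*}(C)$, and every remaining point of $\Gamma^{*}(C)$ is a mixed point adjacent to a point of $\Gamma(C)$, so $\Gamma^{*}(C)$ is connected. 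Next, Proposition \ref{puntosfinales} gives $|A(\Gamma(x_0))\cap\Gamma^{*}(C)|=|A(\Gamma(x_n))\cap\Gamma^{*}(C)|=1$ and Proposition \ref{puntosnofinales} gives $|A(z)\cap\Gamma^{*}(C)|=2$ for every other $z\in\Gamma^{*}(C)$; since $\Gamma$ is injective, $\Gamma(x_0)\neq\Gamma(x_n)$. By Theorem \ref{arcosycaminos}(2), $\Gamma^{*}(C)$ is homeomorphic to a finite interval of $\mathbb{K}$, i.e. an arc, and its endpoints are the two points of adjacency one, namely the pure points $\Gamma(x_0)$ and $\Gamma(x_n)$.

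\emph{Forward implication of (ii).} Let $C=\{x_0,\ldots,x_{n-1}\}$ be a closed $8$-curve, indices read cyclically. Connectivity of $\Gamma^{*}(C)$ follows exactly as above, the pieces now linking into a cycle. The bound $|\Gamma^{*}(C)|\geq 4$ holds because $|\Gamma^{*}(C)|\geq|\Gamma(C)|=|C|$ when $|C|\geq 4$, while if $|C|=3$ then $C$ consists of three mutually $8$-adjacent points, exactly one pair of which is diagonal, and a direct inspection shows $\Gamma^{*}(C)$ then has four points. Finally, $|A(z)\cap\Gamma^{*}(C)|=2$ for every $z\in\Gamma^{*}(C)$ is obtained by repeating the arguments of Propositions \ref{puntosfinales} and \ref{puntosnofinales} in the cyclic setting: those arguments only use that each vertex of $C$ has exactly two $8$-neighbors in $C$ — precisely the closed-curve condition — and the index arithmetic was merely a device to exhibit a forbidden third $8$-neighbor, which works the same cyclically; since there are now no endpoints, every point of $\Gamma^{*}(C)$ has adjacency two, and Proposition \ref{curvacarac} shows $\Gamma^{*}(C)$ is a Jordan curve. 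I expect the main obstacle to be exactly this transfer of the adjacency counts to the closed case, together with the small-cardinality verification; the rest is bookkeeping on top of the already-proven propositions.
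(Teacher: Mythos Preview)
Your approach is essentially the paper's: the forward direction of (i) combines the adjacency counts of Propositions \ref{puntosfinales} and \ref{puntosnofinales} with a connectivity check and then invokes Theorem \ref{arcosycaminos}; the reverse direction uses the identity $\Gamma^{-1}(\Gamma^{*}(C))=C$ together with Proposition \ref{suficiente_8} (and Proposition \ref{8-curva} for (ii)). The paper phrases the connectivity argument a bit differently---enumerating $\Gamma^{*}(C)$ in order and chaining the overlapping sets $(A(z_j)\cup\{z_j\})\cap\Gamma^{*}(C)$---but your consecutive-link argument is equivalent, and your remark that the pure-endpoint hypothesis in (i) is automatic (via Proposition \ref{1contenencia}(iii)) is a pleasant addition not made explicit in the paper.

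There is, however, a real problem in your treatment of part (ii) when $|C|=3$. You verify that $|\Gamma^{*}(C)|=4$ in that case, but then assert that the cyclic version of the adjacency-two count from Proposition \ref{puntosnofinales} goes through. It does not: that argument ultimately uses that $x_{i-1}$ and $x_{i+1}$ are not $8$-adjacent (this is what produces a ``forbidden third neighbor'' of $x_i$), which holds in an $8$-path and in a closed $8$-curve with $|C|\geq 4$, but fails for $|C|=3$ since cyclically $x_{i-1}$ and $x_{i+1}$ are then consecutive. Concretely, $C=\{(0,0),(1,0),(0,1)\}$ is a closed $8$-curve with $\Gamma^{*}(C)=\{(0,0),(1,-1),(1,1),(1,0)\}$, and $|A((0,0))\cap\Gamma^{*}(C)|=3$, so $\Gamma^{*}(C)$ is not a Jordan curve. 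The paper does not treat this case either---its proof of (ii) simply reads ``is shown as (i)''---so the statement as written tacitly needs $|C|\geq 4$.
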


\begin{proof}
(i) Suppose $C = \{x_0, x_1, \ldots, x_{n-1}, x_n\}$ is an 8-path with endpoints $x_0$ and $x_n$. We must show that $\Gamma^*(C)$ is homeomorphic to an interval of the digital line $\mathbb{K}$. To do so, we will use Theorem \ref{arcosycaminos}.

By Proposition \ref{puntosfinales}, we have $|A(\Gamma(x_i)) \cap \Gamma^*(C)| = 1$ for $i = 0, n$. Furthermore, by Proposition \ref{puntosnofinales}, we have $|A(x) \cap \Gamma^*(C)| = 2$ for every $x \in \Gamma^*(C)$ such that $\Gamma(x_n) \neq x \neq \Gamma(x_0)$.

Next, let us show that $\Gamma^*(C)$ is connected. Let $z_1, \ldots, z_m$ be an enumeration of $\Gamma^*(C)$ such that $z_1 = \Gamma(x_0)$ and $z_m = \Gamma(x_n)$, where $A(z_j) \cap \Gamma^*(C) = \{z_{j-1}, z_{j+1}\}$ for $1 < j < m$. Define $A_j = (A(z_j) \cup \{z_j\}) \cap \Gamma^*(C)$ for $1 \leq j \leq m$. Then $A_j \cap A_{j+1} \neq \emptyset$ and $A_j$ is connected. Consequently, $\Gamma^*(C)$ is connected. By Theorem \ref{arcosycaminos}, we conclude that $\Gamma^*(C)$ is an arc with endpoints $\Gamma(x_0)$ and $\Gamma(x_n)$, which are also pure.

Conversely, suppose $\Gamma^{*}(C)$ is an arc with pure endpoints. Note that $\Gamma^{-1}(\Gamma^{*}(C)) = C$. Therefore, $\Gamma^{*}(\Gamma^{-1}(\Gamma^{*}(C))) = \Gamma^{*}(C)$. By Proposition \ref{suficiente_8}, it follows that $\Gamma^{-1}(\Gamma^{*}(C)) = C$ is an 8-path.

(ii) is shown as (i).
\end{proof}

\begin{propo} 
\label{8-finalarco}
Let $C$ be an arc in $\mathbb{K}^2$ with pure endpoints $z$ and $w$. If $\Gamma^{-1}(C)$ is an 8-path in $\mathbb{Z}^2$, then $\Gamma^{-1}(z)$ and $\Gamma^{-1}(w)$ are 8-endpoints in $\Gamma^{-1}(C)$.
\end{propo}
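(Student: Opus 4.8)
The plan is to argue by contradiction: suppose one of the two preimages, say $\Gamma^{-1}(z)$, is \emph{not} an $8$-endpoint of $\Gamma^{-1}(C)$. Since $\Gamma^{-1}(C)$ is assumed to be an $8$-path, it has exactly two $8$-endpoints; if $\Gamma^{-1}(z)$ is not one of them, then either it is an interior point of the $8$-path (hence has exactly two $8$-adjacent points in $\Gamma^{-1}(C)$) or else $\Gamma^{-1}(w)$ fails to be one of the two endpoints as well. I would first dispose of the case where $\Gamma^{-1}(z)$ is an interior point of the $8$-path: then $x:=\Gamma^{-1}(z)$ has two $8$-neighbors $y_1,y_2\in\Gamma^{-1}(C)$, and I would translate this back up to $\mathbb{K}^2$ using Proposition \ref{4adyacentes}. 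Each $8$-adjacency of $x$ with $y_k$ gives rise either to an adjacency $\Gamma(y_k)\in A(\Gamma(x))=A(z)$ (if $4$-adjacent) or to a mixed point $c_k$ with $\Gamma(x),\Gamma(y_k)\in A(c_k)$ (if $8$- but not $4$-adjacent). In the latter case, since $\Gamma(x)$ and $\Gamma(y_k)$ are non-adjacent pure points, $c_k$ satisfies $N(c_k)=\{\Gamma(x),\Gamma(y_k),c_k\}$ or $cl(c_k)=\{\Gamma(x),\Gamma(y_k),c_k\}$, so $c_k\in\Gamma^{*}(\Gamma^{-1}(C))$. By Proposition \ref{1contenencia}(iii) together with the hypothesis that $z$ is a pure endpoint, one gets $c_k\in C$ (the caveat about mixed endpoints does not apply since the $c_k$ arising here lie in the ``interior'' by construction). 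In either case this produces at least two distinct elements of $A(z)\cap C$.

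The point of the previous paragraph is that $|A(z)\cap C|\ge 2$, contradicting the fact that $z$ is an endpoint of the arc $C$, for which $|A(z)\cap C|=1$ by Theorem \ref{arcosycaminos}(1). I would need to be careful to check that the two objects produced upstairs (two pure neighbors, or one pure neighbor and one mixed $c_k$, or two mixed points $c_1,c_2$) are genuinely distinct elements of $A(z)$: distinctness of $y_1,y_2$ together with injectivity of $\Gamma$ handles the pure-pure case; in the mixed cases, if $c_1=c_2$ then $\Gamma(y_1),\Gamma(y_2)$ would both be non-adjacent-to-$\Gamma(x)$ pure points in $A(c_1)$, but a mixed point has only one pure adjacent point on each ``side'' other than those forced by $\Gamma(x)$ — more precisely, $A(c_1)\cap\{\text{pure points}\}$ contains exactly two points collinear through $c_1$, so $\Gamma(y_1)=\Gamma(y_2)$, forcing $y_1=y_2$, a contradiction. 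This is the kind of routine case-check already carried out repeatedly in Propositions \ref{suficiente_8} and \ref{puntosnofinales}, so I would reference those patterns rather than reproduce them in full.

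It remains to handle the alternative: $\Gamma^{-1}(z)$ could a priori be an $8$-endpoint of the $8$-path while $\Gamma^{-1}(w)$ is not. But this is symmetric — the roles of $z$ and $w$ are interchangeable — so the argument above applied to $w$ instead of $z$ gives $|A(w)\cap C|\ge 2$, again contradicting that $w$ is an endpoint of the arc $C$. Since an $8$-path has precisely two $8$-endpoints and we have shown neither can be an interior point, both $\Gamma^{-1}(z)$ and $\Gamma^{-1}(w)$ must be the two $8$-endpoints of $\Gamma^{-1}(C)$.

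I expect the main obstacle to be the bookkeeping in the mixed cases: making sure that when an $8$-neighbor $y_k$ of $x=\Gamma^{-1}(z)$ contributes a mixed point $c_k$, this $c_k$ really lies in $\Gamma^{*}(\Gamma^{-1}(C))$ \emph{and} in $C$, and that it is distinct from any pure neighbor or from the other mixed point. The subtlety is that Proposition \ref{1contenencia}(iii) only guarantees $C\setminus\{\text{mixed endpoints}\}\subseteq\Gamma^{*}(\Gamma^{-1}(C))$, not equality, so one needs $\Gamma^{*}(\Gamma^{-1}(C))\subseteq$ (something controllable); here the cleaner route is to note that $c_k\in\Gamma^{*}(\Gamma^{-1}(C))$ directly from the defining condition $N(c_k)\subseteq\Gamma(\Gamma^{-1}(C))\cup\{c_k\}$ or $cl(c_k)\subseteq\Gamma(\Gamma^{-1}(C))\cup\{c_k\}$ (since $\Gamma(x),\Gamma(y_k)\in\Gamma(\Gamma^{-1}(C))$), and then to use Proposition \ref{1contenencia}(iii) in the form that $A(z)\cap C$ must already have size $1$ because $z$ is a pure endpoint, which is the contradiction. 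Once that is pinned down, the rest is a direct appeal to Theorem \ref{arcosycaminos} and the symmetry between $z$ and $w$.
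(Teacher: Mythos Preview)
Your overall strategy is the contrapositive of the paper's: you assume $\Gamma^{-1}(z)$ is an interior point of the $8$-path and try to produce two elements of $A(z)\cap C$, whereas the paper assumes an $8$-endpoint $u$ has $\Gamma(u)\notin\{z,w\}$ and produces two $8$-neighbors of $u$ in $\Gamma^{-1}(C)$. These are not symmetric, and the direction you chose has a real gap.

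The problem is in the mixed case. When $y_k$ is $8$-adjacent but not $4$-adjacent to $x=\Gamma^{-1}(z)$, you correctly obtain a mixed point $c_k$ with $c_k\in\Gamma^{*}(\Gamma^{-1}(C))$ and $c_k\in A(z)$. But to contradict $|A(z)\cap C|=1$ you need $c_k\in C$, and you never establish this. Proposition~\ref{1contenencia}(iii) gives the inclusion $C\subseteq\Gamma^{*}(\Gamma^{-1}(C))$, which is the wrong direction; the reverse inclusion is precisely Proposition~\ref{necesaria_8}, which is proved \emph{after} the present proposition and uses it. Your final paragraph acknowledges the subtlety but then restates the non-sequitur: having $c_k\in\Gamma^{*}(\Gamma^{-1}(C))$ together with $|A(z)\cap C|=1$ is not a contradiction unless you already know $\Gamma^{*}(\Gamma^{-1}(C))\subseteq C$.

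The paper's direction avoids this entirely: starting from $\Gamma(u)\notin\{z,w\}$ gives two neighbors $v_1,v_2\in A(\Gamma(u))\cap C$ that are \emph{already known to lie in $C$}, and one then pushes down via Proposition~\ref{4adyacentes} to get $8$-neighbors of $u$ in $\Gamma^{-1}(C)$. The case analysis (both $v_i$ pure, both mixed, one of each) mirrors Case~1 of Proposition~\ref{suficiente_8}. To repair your argument you would essentially need to prove $\Gamma^{*}(\Gamma^{-1}(C))=C$ first, which is the harder statement; the cleaner fix is simply to reverse the direction of the contradiction as the paper does.
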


\begin{proof}
Let $u$ be an 8-endpoint of $\Gamma^{-1}(C)$. Since $\Gamma^{-1}(C)$ is an 8-path, it has exactly two 8-endpoints, so it suffices to show that $u \in \{\Gamma^{-1}(z), \Gamma^{-1}(w)\}$. Suppose $u \notin \{\Gamma^{-1}(z), \Gamma^{-1}(w)\}$ to arrive at a contradiction. As $\Gamma(u) \notin \{z, w\}$, it follows that $\Gamma(u) \cap C = \{v_1, v_2\}$ with $v_1 \neq v_2$. We consider three cases:

\medskip

\noindent \textbf{Case 1:} Assume $v_1$ and $v_2$ are pure points. By Proposition \ref{4adyacentes}, $\Gamma^{-1}(v_1)$ and $\Gamma^{-1}(v_2)$ are 8-adjacent to $u$. This contradicts the assumption that $u$ is an 8-endpoint of $\Gamma^{-1}(C)$.

\medskip

\noindent \textbf{Case 2:} Assume $v_1$ and $v_2$ are mixed points. Then neither $v_1$ nor $v_2$ are endpoints of $C$. It follows that $A(v_1) \cap C = \{\Gamma(u), y_1\}$ and $A(v_2) \cap C = \{\Gamma(u), y_2\}$, where $y_1$ and $y_2$ are pure points of $C$. 
Note that $\Gamma(u)$ and $y_1$ are not adjacent. Then, by Proposition \ref{dospuntosmixtos}, $v_1$ is the only mixed point adjacent to both. Consequently, $\Gamma^{-1}(y_1)$ is 8-adjacent to $u$ (see Proposition \ref{4adyacentes}). Similarly, $v_2$ is 8-adjacent to $u$. This contradicts the assumption that $u$ is an 8-endpoint of $\Gamma^{-1}(C)$.

\medskip

\noindent \textbf{Case 3:} Assume $v_1$ or $v_2$ is a mixed point. Using reasoning similar to the previous cases, we conclude that $u$ cannot be an 8-endpoint.

\end{proof}

The following proposition is the converse of Proposition \ref{suficiente_8}.

\begin{propo} 
\label{necesaria_8}
\begin{itemize}
\item[(i)] Let $C$ be an arc in $\mathbb{K}^2$ with pure endpoints. If $\Gamma^{-1}(C)$ is an 8-path in $\mathbb{Z}^2$, then $\Gamma^{*}(\Gamma^{-1}(C)) = C$.

\item[(ii)] If $C$ is an arc  in $\mathbb{K}^2$ with mixed endpoints and $\Gamma^{-1}(C)$ is an $8$-path, then  $\Gamma^{*}(\Gamma^{-1}(C\setminus \{z,w\}))=C\setminus\{z,w\}$ where $z$ and $w$ are the endpoints of $C$.

\item[(iii)] If $C$ is a Jordan curve in  $\mathbb{K}^2$ and $\Gamma^{-1}(C)$ is closed  8-curve in  $\mathbb{Z}^2$, then  $\Gamma^{*}(\Gamma^{-1}(C))=C$.
\end{itemize}
\end{propo}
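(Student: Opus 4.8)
The plan is to prove the three parts of Proposition~\ref{necesaria_8} in sequence, each time combining the containment that is already available from Proposition~\ref{1contenencia} with an argument that no \emph{extra} mixed points can enter $\Gamma^{*}(\Gamma^{-1}(C))$. For part (i): one inclusion, namely $C\subseteq\Gamma^{*}(\Gamma^{-1}(C))$, is immediate from Proposition~\ref{1contenencia}(iii), since $C$ has pure endpoints and hence no mixed endpoints. For the reverse inclusion, recall that $\Gamma^{-1}(\Gamma^{*}(B))=B$ always, so $\Gamma^{*}(\Gamma^{-1}(C))$ and $C$ have exactly the same pure points (these are precisely $\Gamma(\Gamma^{-1}(C))$), and the only thing to rule out is a mixed point $x\in\Gamma^{*}(\Gamma^{-1}(C))\setminus C$. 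Such an $x$ satisfies $N(x)\subseteq\Gamma(\Gamma^{-1}(C))\cup\{x\}$ or $cl(x)\subseteq\Gamma(\Gamma^{-1}(C))\cup\{x\}$; either way $x$ is adjacent to two pure points $a,b$ of $C$ that are \emph{not} adjacent to each other, and by Proposition~\ref{dospuntosmixtos} $x$ is the unique mixed point adjacent to both. By Proposition~\ref{4adyacentes}(2), $\Gamma^{-1}(a)$ and $\Gamma^{-1}(b)$ are then $8$-adjacent but not $4$-adjacent in $\mathbb{Z}^{2}$. The point is that this forces $\Gamma^{-1}(a)$ and $\Gamma^{-1}(b)$ to be consecutive in the $8$-path $\Gamma^{-1}(C)$ (they are $8$-adjacent, and an $8$-path has no $8$-adjacencies between non-consecutive points), and then the unique mixed point witnessing their $8$-adjacency is forced to \emph{already} lie in $C$: indeed, $C$ is an arc and ``cannot turn at a mixed point'', so between two consecutive pure vertices of $\Gamma^{-1}(C)$ that are not $4$-adjacent, the arc $C$ must pass through exactly that mixed point $x$. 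Hence $x\in C$, a contradiction. This last sentence is the crux and needs to be written carefully using the arc structure of $C$ (its vertices, listed in order, alternate appropriately and $A(p)\cap C$ has size $2$ for interior $p$).

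For part (ii): now $C$ has mixed endpoints $z,w$. Apply part (i) not to $C$ but to the arc $C'=C\setminus\{z,w\}$, after checking $C'$ is itself an arc with pure endpoints and that $\Gamma^{-1}(C')$ is an $8$-path. The fact that $C\setminus\{z\}$ is an arc is part of the definition of a Jordan curve only in the Jordan case; here I instead argue directly: since $z$ is a mixed endpoint of the arc $C$, $A(z)\cap C$ is a single pure point $z_1$, so removing $z$ leaves $z_1$ as a new endpoint, which is pure; doing this at both ends gives that $C'$ is an arc with pure endpoints (using Theorem~\ref{arcosycaminos}: $C'$ is connected, finite, and has exactly two points of adjacency $1$ and all others of adjacency $2$). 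By Proposition~\ref{1contenencia}(iii), $x\notin\Gamma^{*}(\Gamma^{-1}(C))$ for $x$ a mixed endpoint of $C$, so $\Gamma^{-1}(C')=\Gamma^{-1}(C)$ as sets of pure-point preimages, and in fact $\Gamma^{-1}(C\setminus\{z,w\})=\Gamma^{-1}(C)\setminus(\text{possibly nothing})$, since $z,w$ mixed means they are not in the range of $\Gamma$ at all; thus $\Gamma^{-1}(C')=\Gamma^{-1}(C)$, which is an $8$-path by hypothesis. Now part (i) applied to $C'$ yields $\Gamma^{*}(\Gamma^{-1}(C'))=C'$, which is exactly the claim.

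For part (iii): $C$ is a Jordan curve and $\Gamma^{-1}(C)$ is a closed $8$-curve. By Proposition~\ref{1contenencia}(ii), $C\subseteq\Gamma^{*}(\Gamma^{-1}(C))$. For the reverse inclusion, repeat the argument of part (i): $\Gamma^{*}(\Gamma^{-1}(C))$ and $C$ agree on pure points, and a putative extra mixed point $x$ would, by Propositions~\ref{dospuntosmixtos} and~\ref{4adyacentes}(2), give two pure points $a,b\in C$ whose $\Gamma$-preimages are $8$-adjacent but not $4$-adjacent in $\mathbb{Z}^{2}$; since $\Gamma^{-1}(C)$ is a closed $8$-curve, every point has exactly two $8$-neighbours in $\Gamma^{-1}(C)$, so $\Gamma^{-1}(a)$ and $\Gamma^{-1}(b)$ are consecutive along the curve, and because the Jordan curve $C$ ``cannot turn at a mixed point'' it must pass through the unique mixed point $x$ joining them, forcing $x\in C$, a contradiction. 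One small addition relative to (i): here there are no endpoints to worry about, which actually makes the bookkeeping cleaner. The main obstacle throughout is the same step: turning ``$\Gamma^{-1}(a),\Gamma^{-1}(b)$ are $8$-adjacent'' into ``$a,b$ are consecutive vertices of the arc/curve $C$ and therefore the connecting mixed point is in $C$''. I expect to handle this by a short lemma-style paragraph describing how an arc (resp. closed curve) in $\mathbb{K}^2$ with a given set of pure points is forced, by $|A(p)\cap C|=2$ and the non-turning-at-mixed-points property, to contain exactly the mixed points dictated by the cyclic/linear order of its pure vertices — essentially a restatement of the reasoning used in Proposition~\ref{1contenencia}(ii) and in the proof of Proposition~\ref{suficiente_8}, case~1.1.
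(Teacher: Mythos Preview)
Your overall strategy is sound and the reductions for (ii) and (iii) match the paper's exactly, but for the reverse inclusion in (i) you take a genuinely different route from the paper and your sketch of the crux step is a bit thin.

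The paper's argument for $\Gamma^{*}(\Gamma^{-1}(C))\subseteq C$ does \emph{not} try to show directly that a spurious mixed point $x$ must already lie in $C$. Instead it first invokes Theorem~\ref{camino*} on the $8$-path $\Gamma^{-1}(C)$ to conclude that $\Gamma^{*}(\Gamma^{-1}(C))$ is itself an arc, and (via Propositions~\ref{8-finalarco} and~\ref{puntosfinales}) that it has the same endpoints $z,w$ as $C$. Then, given $C\subseteq\Gamma^{*}(\Gamma^{-1}(C))$, a hypothetical $x\in\Gamma^{*}(\Gamma^{-1}(C))\setminus C$ is ruled out by a pure adjacency count: the two pure neighbours $w_1,w_2$ of $x$ in $\Gamma^{*}(\Gamma^{-1}(C))$ lie in $C$, and since $w_1$ already has two $C$-neighbours (which also lie in $\Gamma^{*}(\Gamma^{-1}(C))$), adding $x$ gives $|A(w_1)\cap\Gamma^{*}(\Gamma^{-1}(C))|\ge 3$, contradicting that $\Gamma^{*}(\Gamma^{-1}(C))$ is an arc. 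No ordering comparison is needed.

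Your route avoids Theorem~\ref{camino*} and instead tries to lift ``$\Gamma^{-1}(a),\Gamma^{-1}(b)$ consecutive in the $8$-path'' to ``$x\in C$''. This is correct, but the justification you give (``$C$ cannot turn at a mixed point'') is the wrong direction: that property constrains what happens \emph{at} a mixed point of $C$, not which mixed points $C$ must contain. What you actually need is that the two linear orders on the pure points --- the arc order in $C$ and the $8$-path order in $\Gamma^{-1}(C)$ --- coincide. One clean way: the ``consecutive pure points in $C$'' relation gives a path-graph on the pure vertices with endpoints $z,w$ (here you do use the no-turning property for the forward direction, together with Proposition~\ref{4adyacentes}); the $8$-path gives another path-graph on the same vertex set with the same endpoints (Proposition~\ref{8-finalarco}); the first is a subgraph of the second, and two paths on the same vertex set with one contained in the other are equal. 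Alternatively, analyse $A(a)\cap C$ directly as in the paper's Proposition~\ref{puntosnofinales}. Either way, your ``short lemma-style paragraph'' has to do real work; the paper sidesteps this entirely by using Theorem~\ref{camino*}.
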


\begin{proof}
(i) Let $z$ and $w$ be the endpoints of $C$. 
To simplify the notation, we will write $\Gamma^{-1}(C)^{*}$ instead of $\Gamma^{*}(\Gamma^{-1}(C))$. By Theorem \ref{camino*}, $\Gamma^{-1}(C)^{*}$ is an arc. First, we show that $z$ and $w$ are the endpoints of $\Gamma^{-1}(C)^{*}$. By Proposition \ref{8-finalarco}, $\Gamma^{-1}(z)$ and $\Gamma^{-1}(w)$ are 8-endpoints of $\Gamma^{-1}(C)$. By Proposition \ref{puntosfinales}, $|A(z) \cap \Gamma^{-1}(C)^{*}| = |A(w) \cap \Gamma^{-1}(C)^{*}| = 1$, which means that $z$ and $w$ are the endpoints of $\Gamma^{-1}(C)^{*}$.

Since $z$ and $w$ are pure, by Proposition \ref{1contenencia}(iii), $C \subseteq \Gamma^{-1}(C)^{*}$. 
Now we prove that $\Gamma^{-1}(C)^{*} \subseteq C$. Let $x \in \Gamma^{-1}(C)^{*}$. By Proposition \ref{1contenencia}(i), we may assume that $x$ is mixed. Suppose $x \notin C$ and arrive at a contradiction. We have already shown that $\Gamma^{-1}(C)^{*}$ is an arc with endpoints $z$ and $w$. 
Therefore, $x$ is not an endpoint of $\Gamma^{-1}(C)^{*}$, and thus there exist two pure points $w_1$ and $w_2$ such that $A(x) \cap \Gamma^{-1}(C)^{*} = \{w_1, w_2\}$. It follows that $w_1, w_2 \in C$ (see Proposition \ref{1contenencia}). We consider two cases:

\medskip

\noindent \textbf{Case 1:} Suppose $w_1, w_2 \notin \{w, z\}$. Then $A(w_1) \cap C = \{z_1, z_2\}$ for some points $z_1$ and $z_2$. Since $x \in A(w_1)$ and $x \notin C$, it follows that $x \notin \{z_1, z_2\}$. As $C \subseteq \Gamma^{-1}(C)^{*}$, we have $\{z_1, z_2, x\} \subseteq A(w_1) \cap \Gamma^{-1}(C)^{*}$, which contradicts the fact that $\Gamma^{-1}(C)^{*}$ is an arc.

\medskip

\noindent \textbf{Case 2:} Suppose $w_1 \in \{w, z\}$ or $w_2 \in \{w, z\}$. Without loss of generality, assume $w_1 = z$. 
Since $z = w_1$ is an endpoint of $C$, we have $A(w_1) \cap C = \{z_1\}$ for some $z_1$. As $x \notin C$, it follows that $x \neq z_1$. On the other hand, we have already shown that $C \subseteq \Gamma^{-1}(C)^{*}$. Thus, $A(w_1) \cap C \subseteq A(w_1) \cap \Gamma^{-1}(C)^{*}$. Consequently, $|A(w_1) \cap \Gamma^{-1}(C)^{*}| \geq 2$, since $x \neq z_1$ and $x \in A(w_1)$. This contradicts the fact that $\Gamma^{-1}(C)$ is an 8-path.

This ends the verification that $\Gamma^{-1}(C)^{*} \subseteq C$. 

\medskip 

\noindent ii) Let $C_0=C\setminus\{z,w\}$, then  $C_0$ is an arc with pure endpoints and  $\Gamma^{-1}(C_0)=\Gamma^{-1}(C)$ is an 8-path. By part i), $\Gamma^{*}(\Gamma^{-1}(C_0))=C_0$. Thus $\Gamma^{*}(\Gamma^{-1}(C\setminus \{z,w\}))=C\setminus\{z,w\}$.

\medskip 

\noindent iii)  It is shown analogously.

\end{proof}

Finally, we can present the counterpart to Theorem \ref{camino*}.

\begin{teo}
\label{8caminos-gamma*}
(i) Let $C$ be an arc in $\mathbb{K}^2$ with pure endpoints. Then $\Gamma^{-1}(C)$ is an 8-path in $\mathbb{Z}^2$ if and only if $\Gamma^{*}(\Gamma^{-1}(C)) = C$.

(ii) Let $J\subseteq  \mathbb{K}^2$ be a Jordan curve in $\mathbb{K}^2$. Then $\Gamma^{-1}(J)$ is a closed  8-curve if and only if $\Gamma^{*}(\Gamma^{-1}(J))=J$.

\end{teo}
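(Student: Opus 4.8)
The plan is to prove Theorem \ref{8caminos-gamma*} as two almost-immediate corollaries of the machinery already in place, namely Proposition \ref{suficiente_8}, Proposition \ref{necesaria_8}, and Theorem \ref{camino*}. The structure in both parts is the same biconditional pattern: one direction is a restatement of an earlier proposition, and the other direction combines an earlier proposition with a fixed-point identity for $\Gamma^*\circ\Gamma^{-1}$.

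For part (i): the ``if'' direction is exactly Proposition \ref{suficiente_8}, since if $\Gamma^*(\Gamma^{-1}(C))=C$ and $C$ is an arc in $\mathbb{K}^2$, that proposition yields directly that $\Gamma^{-1}(C)$ is an $8$-path (the hypothesis on pure endpoints is not even needed here, though it is available). For the ``only if'' direction, assume $C$ is an arc with pure endpoints and that $\Gamma^{-1}(C)$ is an $8$-path; then Proposition \ref{necesaria_8}(i) gives $\Gamma^*(\Gamma^{-1}(C))=C$ outright. So part (i) is just the conjunction of these two references, and I would write it in two sentences.

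For part (ii): identically, the ``if'' direction invokes Proposition \ref{suficiente_8} combined with the characterization of Jordan curves — more precisely, if $\Gamma^*(\Gamma^{-1}(J))=J$ and $J$ is a Jordan curve, then by Theorem \ref{camino*}(ii) applied to $C:=\Gamma^{-1}(J)$ we get that $\Gamma^*(\Gamma^{-1}(J))$ is a Jordan curve exactly when $\Gamma^{-1}(J)$ is a closed $8$-curve; since $\Gamma^*(\Gamma^{-1}(J))=J$ is a Jordan curve by hypothesis, $\Gamma^{-1}(J)$ is a closed $8$-curve. (Alternatively one can mimic Proposition \ref{4-curva}(ii) and deduce it from $|A(\Gamma(x))\cap J|=2$.) For the ``only if'' direction, if $J$ is a Jordan curve and $\Gamma^{-1}(J)$ is a closed $8$-curve, Proposition \ref{necesaria_8}(iii) gives $\Gamma^*(\Gamma^{-1}(J))=J$. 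Thus part (ii) is again a two-line citation argument.

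The only place requiring genuine care — and hence the ``main obstacle'' — is making sure the logical bookkeeping is airtight: in particular, for the forward directions one must check that the hypotheses feeding Proposition \ref{necesaria_8} are actually met (that $C$, resp.\ $J$, has the stated endpoint type, and that $\Gamma^{-1}$ of it is the relevant kind of path/curve), and for the converse directions one must confirm that Theorem \ref{camino*} is being applied to a set of the form $\Gamma^{-1}(\text{something})$, using the key identity $\Gamma^{-1}(\Gamma^*(B))=B$ so that $\Gamma^*(C)$ with $C=\Gamma^{-1}(J)$ really does reproduce $J$ when $\Gamma^*(\Gamma^{-1}(J))=J$. None of this involves new geometry; it is purely assembling the already-proved pieces, so the proof should be short. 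I would present it as: \emph{Proof.} (i) The ``if'' part is Proposition \ref{suficiente_8}; the ``only if'' part is Proposition \ref{necesaria_8}(i). (ii) The ``only if'' part is Proposition \ref{necesaria_8}(iii). For the ``if'' part, suppose $\Gamma^*(\Gamma^{-1}(J))=J$; since $J$ is a Jordan curve, Theorem \ref{camino*}(ii) applied to $\Gamma^{-1}(J)$ shows $\Gamma^{-1}(J)$ is a closed $8$-curve. $\square$
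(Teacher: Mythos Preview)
Your proposal is correct and matches the paper's approach almost exactly: the paper's proof reads ``It follows immediately from Propositions \ref{suficiente_8}, \ref{4-curva} and \ref{necesaria_8}.'' The only cosmetic difference is that for the ``if'' direction of (ii) the paper cites Proposition \ref{4-curva}(ii) directly (which is precisely the needed implication), whereas you route through Theorem \ref{camino*}(ii); you yourself note this alternative, so there is no substantive divergence.
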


\begin{proof}
It follows immediately from Propositions \ref{suficiente_8}, \ref{4-curva} and \ref{necesaria_8}.
\end{proof}

The following result was motivated by a remark given in \cite{Khalimsky2} (see the end of page 51) and will be needed later on. 

\begin{propo}
\label{4-caminoG}
If $C$ is a 4-path in $\mathbb{Z}^{2}$, then $\Gamma(C)$ is an arc in $\mathbb{K}^{2}$.
\end{propo}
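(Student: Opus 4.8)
The plan is to deduce this directly from Theorem \ref{camino*}(i) together with the definition of $\Gamma^*$ and the known characterization of arcs via Theorem \ref{arcosycaminos}. Since $C$ is a $4$-path, it is in particular an $8$-path (every $4$-adjacency is an $8$-adjacency, so the two $4$-endpoints are $8$-endpoints and interior points keep exactly two $8$-neighbors — though one should check that no extra $8$-adjacencies appear; see below). Granting that $C$ is an $8$-path, Theorem \ref{camino*}(i) already gives that $\Gamma^*(C)$ is an arc in $\mathbb{K}^2$. The remaining point is to show $\Gamma^*(C) = \Gamma(C)$, i.e.\ that the ``extra'' mixed points added by the operator $\Gamma^*$ do not actually occur when $C$ is a $4$-path. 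If that holds, then $\Gamma(C) = \Gamma^*(C)$ is an arc, which is exactly the claim.

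First I would verify the claim that a $4$-path $C$ is an $8$-path. The only thing to rule out is that two points $x_i, x_j$ of $C$ with $|i-j|\ge 2$ are $8$-adjacent. Suppose $x_i$ and $x_j$ are $8$-adjacent; I would use Proposition \ref{4adyacentes}: either they are $4$-adjacent — impossible in a $4$-path unless $|i-j|=1$ — or there is a unique mixed point $c$ with $\Gamma(x_i),\Gamma(x_j)\in A(c)$ and $\Gamma(x_i),\Gamma(x_j)$ not adjacent. In the latter case consider the segment of $\Gamma(C)$ from $\Gamma(x_i)$ to $\Gamma(x_j)$: by Proposition \ref{caminor1}, $\Gamma(C)$ is an arc consisting of pure points with endpoints $\Gamma(x_0),\Gamma(x_n)$ (applying that proposition in reverse via Theorem \ref{camino*} is circular, so instead I would argue combinatorially using the explicit coordinate description $\Gamma(x,y)=(x+y,y-x)$). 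Concretely, $4$-adjacent points of $\mathbb{Z}^2$ map under $\Gamma$ to $A$-adjacent pure points that differ by $(\pm1,\pm1)$; an $8$-but-not-$4$ adjacency in $\mathbb{Z}^2$ maps to pure points differing by $(\pm 2,0)$ or $(0,\pm 2)$. A short geometric argument shows that if $x_i$ and $x_j$ were $8$-adjacent via a common mixed point, the $\Gamma$-images of the intermediate path points would force a self-intersection or a branch in $\Gamma(C)$, contradicting that it is an injective $4$-path's image. Alternatively, and more cleanly, I would just cite Proposition \ref{curvasimple}-style reasoning: a $4$-path is known in the literature to be an $8$-path, and this is essentially immediate from the fact that consecutive steps are unit horizontal/vertical moves.

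Next I would show $\Gamma^*(C)=\Gamma(C)$. Let $x$ be a mixed point with $N(x)\subseteq \Gamma(C)\cup\{x\}$ or $cl(x)\subseteq \Gamma(C)\cup\{x\}$; I must derive a contradiction. In the digital plane a mixed point $x$ has $|A(x)|=4$, and $N(x)$ (resp.\ $cl(x)$) is a three-point set $\{a,x,b\}$ with $a,b$ pure points that are \emph{not} adjacent to each other (they differ by $(\pm2,0)$ or $(0,\pm2)$; see Figure \ref{adyacencias_plano}). So $a,b\in\Gamma(C)$, say $a=\Gamma(x_i)$, $b=\Gamma(x_j)$. Since $a$ and $b$ are non-adjacent pure points sharing the mixed neighbor $x$, Proposition \ref{4adyacentes}(2) tells us $x_i$ and $x_j$ are $8$-adjacent but not $4$-adjacent in $\mathbb{Z}^2$. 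But $C$ is a $4$-path, hence (by the previous paragraph) an $8$-path in which the only $8$-adjacencies are between consecutive points, which in a $4$-path are $4$-adjacent — contradiction. Therefore no such mixed point exists, and $\Gamma^*(C)=\Gamma(C)$.

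Combining: $C$ a $4$-path $\Rightarrow$ $C$ an $8$-path $\Rightarrow$ (Theorem \ref{camino*}(i)) $\Gamma^*(C)$ is an arc with pure endpoints $\Rightarrow$ (previous paragraph) $\Gamma(C)=\Gamma^*(C)$ is an arc in $\mathbb{K}^2$, as desired. The main obstacle I anticipate is the first step — rigorously confirming that a $4$-path has no ``shortcut'' $8$-adjacencies — since it is the one place a genuine (if routine) geometric argument is needed; everything after that is bookkeeping with the earlier propositions. If one prefers to avoid even that, an alternative route is to prove directly, using Theorem \ref{arcosycaminos}, that $\Gamma(C)$ itself satisfies the endpoint/interior adjacency count (via Proposition \ref{4adyacentes} applied to the $4$-adjacencies of $C$) and is connected (as in the proof of Theorem \ref{camino*}), bypassing $\Gamma^*$ entirely; I would present the $\Gamma^*$-based argument as the primary one and mention this as a remark.
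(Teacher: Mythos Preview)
Your primary approach has a genuine gap: the claim that every $4$-path is an $8$-path is false. Take $C=\{(0,0),(1,0),(1,1)\}$. This is a $4$-path with $4$-endpoints $(0,0)$ and $(1,1)$, but $(0,0)$ and $(1,1)$ are $8$-adjacent (diagonally), so every point of $C$ has exactly two $8$-neighbors in $C$ and there are no $8$-endpoints at all; $C$ is a closed $8$-curve, not an $8$-path. Consequently Theorem~\ref{camino*}(i) does not apply. Your second step also fails on this example: the mixed point $(1,0)\in\mathbb{K}^2$ satisfies $cl(1,0)=\{(0,0),(1,0),(2,0)\}\subseteq\Gamma(C)\cup\{(1,0)\}$ (since $\Gamma(0,0)=(0,0)$ and $\Gamma(1,1)=(2,0)$), so $(1,0)\in\Gamma^*(C)\setminus\Gamma(C)$ and $\Gamma^*(C)\neq\Gamma(C)$. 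Your argument for $\Gamma^*(C)=\Gamma(C)$ correctly deduces that the offending $x_i,x_j$ are $8$-adjacent but not $4$-adjacent, yet that is not a contradiction for a $4$-path; the contradiction you draw depends on the already-false premise that $C$ is an $8$-path.

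The ``alternative route'' you mention at the end --- verifying the adjacency counts of Theorem~\ref{arcosycaminos} for $\Gamma(C)$ directly via Proposition~\ref{4adyacentes}, and checking connectedness --- is exactly what the paper does, and is the correct argument. The key observation you are missing is that $\Gamma(C)$ consists only of pure points, so the only way $\Gamma(x_j)$ can lie in $A(\Gamma(x_i))$ is via part~(1) of Proposition~\ref{4adyacentes}, forcing $x_i,x_j$ to be $4$-adjacent and hence consecutive; this is what pins down the adjacency count without any $8$-path detour.
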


\begin{proof}
Let $C = \{x_0, x_1, \ldots, x_{n-1}, x_n\}$ be a 4-path with 4-endpoints $x_0$ and $x_n$. To prove that $\Gamma(C)$ is an arc, we will use Theorem \ref{arcosycaminos}.

First, let us show that $\Gamma(C)$ is connected.  
Let $A_{i-1} = \{\Gamma(x_{i-1}), \Gamma(x_i)\}$ for $1 \leq i \leq n$. Clearly, $A_{i} \cap A_{i+1} \neq \emptyset$ for $0 \leq i < n$, and by Proposition \ref{4adyacentes}, each $A_{i}$ is connected. Since $\Gamma(C) = \bigcup\{A_{i-1} : 1 \leq i \leq n\}$, we conclude that $\Gamma(C)$ is connected.

Next, we show that $|A(\Gamma(x_i)) \cap \Gamma(C)| = 2$ for $1 \leq i \leq n-1$.  
Since $C$ is a 4-path and $x_i$ is not a 4-endpoint of $C$, we have that $x_i$ is 4-adjacent to both $x_{i-1}$ and $x_{i+1}$. Therefore, by Proposition \ref{4adyacentes}, $|A(\Gamma(x_i)) \cap \Gamma(C)| = 2$.

Finally, we prove that $|A(\Gamma(x_i)) \cap \Gamma(C)| = 1$ for $i = 0, n$.  
We will do it for $i = 0$, as the other case is similar. Since $x_0$ is 4-adjacent to $x_1$, $\Gamma(x_1) \in A(x_0)$ (by Proposition \ref{4adyacentes}), and necessarily $|\Gamma(x_0) \cap \Gamma(C)| = 1$, because otherwise $C$ would not be a 4-path.
\end{proof}

\section{ Preservation of connectivity}
\label{sec-prese-conex}

In this section, we analyze the preservation of connectivity.

\begin{teo}
\label{4-caminoplano}
$A \subseteq \mathbb{Z}^2$ is 4-connected if and only if $\Gamma(A)$ is connected in $\mathbb{K}^2$.
\end{teo}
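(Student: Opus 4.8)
The plan is to prove both implications using the characterization of $4$-connectivity via $4$-paths (Proposition~\ref{curvasimple}) together with the fact, already available to us, that in an Alexandroff space connectedness is equivalent to arc-connectedness (and hence to path-connectedness). The key bridge will be Proposition~\ref{4-caminoG}, which turns a $4$-path in $\mathbb{Z}^2$ into an arc in $\mathbb{K}^2$, and Proposition~\ref{caminor1} (or Proposition~\ref{4adyacentes} directly), which goes back the other way.

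For the forward direction, suppose $A\subseteq\mathbb{Z}^2$ is $4$-connected. I would show $\Gamma(A)$ is path-connected in $\mathbb{K}^2$. Take any two points of $\Gamma(A)$, say $\Gamma(x)$ and $\Gamma(y)$ with $x,y\in A$. By Proposition~\ref{curvasimple} there is a $4$-path $C\subseteq A$ with $4$-endpoints $x$ and $y$. By Proposition~\ref{4-caminoG}, $\Gamma(C)$ is an arc in $\mathbb{K}^2$; since $\Gamma$ is one-to-one, $\Gamma(x),\Gamma(y)\in\Gamma(C)\subseteq\Gamma(A)$, so $\Gamma(x)$ and $\Gamma(y)$ lie in a connected subset of $\Gamma(A)$. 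As any two points of $\Gamma(A)$ can be joined inside $\Gamma(A)$ by a connected subset, $\Gamma(A)$ is connected. (One should note the trivial cases $|A|\le 1$ separately, but these are immediate.)

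For the converse, suppose $\Gamma(A)$ is connected in $\mathbb{K}^2$. Since $\mathbb{K}^2$ is an Alexandroff space, $\Gamma(A)$ (with the subspace topology, which is again Alexandroff) is arc-connected; in particular it is path-connected, so between any $\Gamma(x),\Gamma(y)\in\Gamma(A)$ there is a path, and a standard argument in Alexandroff spaces lets us extract from it a sequence $\Gamma(x)=p_0,p_1,\dots,p_k=\Gamma(y)$ of points of $\Gamma(A)$ with consecutive ones adjacent, i.e.\ $p_{j+1}\in A(p_j)$. Here is the subtle point: the $p_j$ all lie in $\Gamma(A)$, hence are pure points, and two pure points that are adjacent in $\mathbb{K}^2$ correspond, via Proposition~\ref{4adyacentes}(1), to $4$-adjacent points of $\mathbb{Z}^2$. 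Therefore $\Gamma^{-1}(p_0),\dots,\Gamma^{-1}(p_k)$ is a sequence in $A$ with consecutive terms $4$-adjacent, showing $x$ and $y$ lie in the same $4$-component of $A$. Since this holds for all $x,y\in A$, $A$ is $4$-connected by Proposition~\ref{curvasimple}.

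The main obstacle is the converse direction, specifically extracting an ``adjacency walk'' inside $\Gamma(A)$ from mere connectedness. One must be careful that a path $f\colon I\to\Gamma(A)$ from a finite interval $I$ of $\mathbb{K}$ need not have image exactly a sequence of pairwise-adjacent points, but the image of a connected set in an Alexandroff space is connected, and a finite connected Alexandroff space is ``chain-connected'' in the sense that any two points are linked by a finite sequence of successively adjacent points staying in the set; this is exactly what is used (and is implicit in the equivalence of connectedness and arc-connectedness cited after Definition of paths/arcs). Once that combinatorial fact is in hand, Proposition~\ref{4adyacentes}(1) does the rest, since every point of $\Gamma(A)$ is pure, so the alternative case of Proposition~\ref{4adyacentes}(2) (which would produce a mixed intermediate point outside $\Gamma(A)$) never arises. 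Alternatively, and perhaps more cleanly, one can invoke Proposition~\ref{caminor1}: a path in $\Gamma(A)$ between $\Gamma(x)$ and $\Gamma(y)$ can be refined to an \emph{arc} $C$ with those endpoints; $C$ need not lie in $\Gamma(A)$, but its pure points do, and in fact if $C$ consists only of pure points then $\Gamma^{-1}(C)$ is a $4$-path in $A$ joining $x$ and $y$. Handling the case where such an arc passes through mixed points is what forces the chain-connectedness argument above, so I would present the chain argument as the primary route and keep Proposition~\ref{caminor1} as motivation.
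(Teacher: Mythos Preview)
Your forward direction matches the paper's exactly. For the converse, your chain-connectedness argument is correct, but you have overcomplicated matters by misjudging the ``alternative'' route via Proposition~\ref{caminor1}, which is in fact the paper's actual proof. You write that an arc $C$ between $\Gamma(x)$ and $\Gamma(y)$ ``need not lie in $\Gamma(A)$'' and might pass through mixed points; this is not so. Since $\Gamma(A)$, as an Alexandroff subspace of $\mathbb{K}^2$, is arc-connected, there is an arc $D$ \emph{inside} $\Gamma(A)$ from $\Gamma(x)$ to $\Gamma(y)$ (subspace topologies are transitive, so $D$ is also an arc of $\mathbb{K}^2$). Every point of $\Gamma(A)$ is pure, so $D$ consists only of pure points, and Proposition~\ref{caminor1} applies immediately to give a $4$-path $\Gamma^{-1}(D)\subseteq A$ from $x$ to $y$. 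That is the whole argument; no case analysis for mixed points is needed, and your chain-extraction step, while valid, is doing by hand what the cited arc-connectedness result already packages.
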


\begin{proof}
($\Rightarrow$) Being 4-connected is equivalent to being connected by 4-paths (see Proposition \ref{curvasimple}). Thus, by Proposition \ref{4-caminoG} and Theorem \ref{arcosycaminos}, the result follows.

\medskip

($\Leftarrow$) Let $x, y \in A$. Since $\Gamma(A)$ is connected, there exists an arc $D \subseteq \Gamma(A)$ from $\Gamma(x)$ to $\Gamma(y)$. Note that $D$ contains only pure points; therefore, by Proposition \ref{caminor1}, $\Gamma^{-1}(D)$ is a 4-path from $x$ to $y$ in $A$. Hence, $A$ is 4-connected.
\end{proof}

\begin{propo}
\label{conexo-8}
If $C \subseteq \mathbb{K}^2$ is connected, then $\Gamma^{-1}(C)$ is 8-connected.
\end{propo}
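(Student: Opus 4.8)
The plan is to reduce the statement to the already-established fact that $4$-connectivity in $\mathbb{Z}^2$ implies $8$-connectivity (which is immediate, since every $4$-path is an $8$-path, or more precisely since being $4$-connected implies being $8$-connected by Proposition \ref{curvasimple}), together with a careful analysis of the two types of points of $C$. Let $x, y \in \Gamma^{-1}(C)$; I want to produce an $8$-path in $\Gamma^{-1}(C)$ joining them, and then invoke Proposition \ref{curvasimple}. Since $C$ is connected in the Alexandroff space $\mathbb{K}^2$, it is arc-connected, so there is an arc $D \subseteq C$ with endpoints $\Gamma(x)$ and $\Gamma(y)$ (both pure, being in the range of $\Gamma$).

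The key step is to understand $\Gamma^{-1}(D)$ for an arbitrary arc $D \subseteq \mathbb{K}^2$ whose endpoints are pure. If $D$ consists only of pure points, Proposition \ref{caminor1} already gives that $\Gamma^{-1}(D)$ is a $4$-path, hence an $8$-path, and we are done. In general $D$ may contain mixed points, and then $\Gamma^{-1}(D)$ need not be a $4$-path. However, I claim $\Gamma^{-1}(D)$ is still $8$-connected: walk along the arc $D$ from one endpoint to the other as a sequence $v_0, v_1, \dots, v_k$ of consecutive adjacent points (using that an arc is homeomorphic to a finite interval of $\mathbb{K}$, so it has a natural linear order with $v_{i}$ adjacent to $v_{i+1}$). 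For each consecutive pair $v_i, v_{i+1}$: if both are pure, Proposition \ref{4adyacentes}(1) says $\Gamma^{-1}(v_i)$ and $\Gamma^{-1}(v_{i+1})$ are $4$-adjacent, hence $8$-adjacent; if one of them, say $v_{i+1}$, is mixed, then since the arc continues, $v_{i+2}$ is pure and $v_i, v_{i+2} \in A(v_{i+1})$, and because the arc does not backtrack, $v_i \neq v_{i+2}$, so by Proposition \ref{4adyacentes}(2) (noting $v_i$ and $v_{i+2}$ are then non-$4$-adjacent pure points sharing the mixed adjacent point $v_{i+1}$) we get that $\Gamma^{-1}(v_i)$ and $\Gamma^{-1}(v_{i+2})$ are $8$-adjacent. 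Discarding the mixed points, the sequence of $\Gamma$-preimages of the pure points of $D$ is therefore a sequence of successively $8$-adjacent points of $\Gamma^{-1}(D) = \Gamma^{-1}(C \cap \text{range}(\Gamma) \text{ restricted to } D)$ starting at $x$ and ending at $y$; that is, $x$ and $y$ lie in the same $8$-component of $\Gamma^{-1}(D) \subseteq \Gamma^{-1}(C)$.

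Since $x$ and $y$ were arbitrary in $\Gamma^{-1}(C)$, this shows $\Gamma^{-1}(C)$ is $8$-connected by Proposition \ref{curvasimple}. I expect the main obstacle to be the bookkeeping in the mixed-point case: one must be careful that consecutive mixed points cannot occur along an arc (which follows since an arc cannot turn at a mixed point, as used in Proposition \ref{1contenencia}, so between any two pure points of the arc there is at most one intervening mixed point), and that the two pure neighbors of a mixed point on the arc are genuinely distinct and genuinely non-adjacent so that Proposition \ref{4adyacentes}(2) applies cleanly. Once that local structure is pinned down, the argument is just concatenation of $8$-adjacencies.
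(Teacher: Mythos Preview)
Your approach is correct. The paper argues by contradiction: assuming a partition $\Gamma^{-1}(C)=A\cup B$ into non-$8$-adjacent pieces, it picks a \emph{minimal} arc $D\subseteq C$ between $\Gamma(A)$ and $\Gamma(B)$, shows that minimality forces $D$ to have only two pure points (its endpoints) with at most one mixed point between them, and then Propositions \ref{dospuntosmixtos} and \ref{4adyacentes} give that those two endpoints have $8$-adjacent preimages, a contradiction. You instead walk along the full arc from $\Gamma(x)$ to $\Gamma(y)$, handling each pure--pure and pure--mixed--pure step locally via Proposition \ref{4adyacentes}. Both proofs hinge on the same local fact (two pure points flanking a mixed point on an arc are non-adjacent in $\mathbb{K}^2$, so their $\Gamma$-preimages are $8$-adjacent); the paper's minimality trick simply collapses the arc to a single such step, while you iterate over all of them. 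Your version is more constructive; the paper's is terser once the minimal-arc reduction is in hand. One sharpening worth making explicit: the non-adjacency of $v_i$ and $v_{i+2}$ in $\mathbb{K}^2$ (needed to guarantee uniqueness of the intermediate mixed point and hence to invoke Proposition \ref{4adyacentes}(2)) follows from the arc condition $|A(v_i)\cap D|\le 2$, not merely from $v_i\neq v_{i+2}$; you flag this as an expected obstacle but should state the derivation rather than only the conclusion.
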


\begin{proof}
Assume $C$ is connected but $\Gamma^{-1}(C)$ is not 8-connected. Then there exist two nonempty sets $A$ and $B$ such that $A$ and $B$ are not 8-adjacent, $\Gamma^{-1}(C) = A \cup B$, and $A \cap B = \emptyset$. Note that $\Gamma(A) \cup \Gamma(B) \subseteq C$ and $\Gamma(A) \cup \Gamma(B)$ contain all pure points of $C$.

Since $C$ is connected, for every $x \in A$ and $y \in B$, there exists an arc in $C$ from $\Gamma(x)$ to $\Gamma(y)$. Let $D \subseteq C$ be an arc of minimal cardinality between elements of $\Gamma(A)$ and $\Gamma(B)$. Let $x \in A$ and $y \in B$ such that $\Gamma(x)$ and $\Gamma(y)$ are the endpoints of $D$.

First, we show that $\Gamma(x)$ and $\Gamma(y)$ are the only pure points of $D$. Suppose otherwise, and let $w \in D$ be another pure point. Since $w \in \Gamma(A) \cup \Gamma(B)$, assume without loss of generality that $w \in \Gamma(A)$. As $D$ is an arc, there exists another arc $D' \subseteq D$ from $w$ to $\Gamma(y)$. Note that $\Gamma^{-1}(w) \in A$ and $D'$ is contained in $D \setminus \{\Gamma(x)\}$, contradicting the minimality of $D$. Hence, the only pure points of $D$ are $\Gamma(x)$ and $\Gamma(y)$.

Since $D$ is an arc with endpoints $\Gamma(x)$ and $\Gamma(y)$, we have $A(\Gamma(x)) \cap D = \{w_1\}$ and $A(\Gamma(y)) \cap D = \{w_2\}$. Observe that $w_1$ and $w_2$ are mixed points, as $\Gamma(x)$ and $\Gamma(y)$ are the only pure points of $D$. Consequently, $|A(w_1) \cap D| = |A(w_2) \cap D| = 2$. Since the adjacency of a mixed point includes only pure points, $w_1$ is also adjacent to $\Gamma(y)$, and $w_2$ is adjacent to $\Gamma(x)$. We claim that $w_1 = w_2$. Indeed, assume $w_1 \neq w_2$. By Proposition \ref{dospuntosmixtos}, $\Gamma(x)$ and $\Gamma(y)$ are adjacent. Then, by Proposition \ref{4adyacentes}, $x$ and $y$ are 4-adjacent, contradicting the assumption that $A$ and $B$ are not 8-adjacent. Thus, $w_1 = w_2$.

Since $w_1 = w_2$, $w_1$ is the only mixed point adjacent to both $\Gamma(x)$ and $\Gamma(y)$. Therefore, by Proposition \ref{4adyacentes}, $x$ and $y$ are 8-adjacent. This implies that $A$ and $B$ are 8-adjacent, which contradicts the choice of $A$ and $B$. This contradiction completes the proof.
\end{proof}

\begin{teo}
\label{8-conexida}
$A\subseteq \mathbb{Z}^{2}$ is 8-connected if and only if $\Gamma^{*}(A)$ is connected in $\mathbb{K}^{2}$.
\end{teo}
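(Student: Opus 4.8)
The plan is to prove both implications by reducing to already-established results about $\Gamma^{*}$ and about arcs, using the characterization of $8$-connectivity by $8$-paths (Proposition \ref{curvasimple}).

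For the forward direction, suppose $A\subseteq\mathbb{Z}^2$ is $8$-connected. Given $x,y\in A$, Proposition \ref{curvasimple} yields an $8$-path $C\subseteq A$ with $8$-endpoints $x$ and $y$. By Theorem \ref{camino*}(i), $\Gamma^{*}(C)$ is an arc in $\mathbb{K}^2$; in particular it is connected, and it contains $\Gamma(x),\Gamma(y)\in\Gamma(A)\subseteq\Gamma^{*}(A)$. Moreover $\Gamma^{*}(C)\subseteq\Gamma^{*}(A)$: this needs a short monotonicity check on the operator $\Gamma^{*}$, namely that $B\subseteq B'$ implies $\Gamma^{*}(B)\subseteq\Gamma^{*}(B')$, which is immediate from the definition \eqref{defoperador*} since both $\Gamma(B)\subseteq\Gamma(B')$ and the condition ``$N(x)\subseteq\Gamma(B)\cup\{x\}$ or $cl(x)\subseteq\Gamma(B)\cup\{x\}$'' only becomes easier to satisfy as the underlying set grows. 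Hence any two points of $\Gamma^{*}(A)$ that lie in $\Gamma(A)$ are joined by a connected subset of $\Gamma^{*}(A)$. It remains to handle a mixed point $z\in\Gamma^{*}(A)\setminus\Gamma(A)$: by definition of $\Gamma^{*}$, either $N(z)\subseteq\Gamma(A)\cup\{z\}$ or $cl(z)\subseteq\Gamma(A)\cup\{z\}$, so $z$ is adjacent to some pure point of $\Gamma(A)$, and therefore $\{z\}$ together with that pure point is a connected set inside $\Gamma^{*}(A)$ linking $z$ into the ``pure part''. Combining these observations, $\Gamma^{*}(A)$ is connected (one assembles the connecting arcs and the one-step attachments of mixed points into a chain of overlapping connected subsets, as in the proof of Theorem \ref{camino*}).

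For the converse, suppose $\Gamma^{*}(A)$ is connected. Recall from the discussion following \eqref{defoperador*} that $\Gamma^{-1}(\Gamma^{*}(A))=A$. By Proposition \ref{conexo-8}, applied with $C=\Gamma^{*}(A)$, we conclude that $\Gamma^{-1}(\Gamma^{*}(A))=A$ is $8$-connected. This direction is therefore essentially immediate once Proposition \ref{conexo-8} is in hand.

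The main obstacle is the forward direction, specifically making the assembly argument for connectivity of $\Gamma^{*}(A)$ fully rigorous when $A$ is an arbitrary (not necessarily $8$-path) $8$-connected set: one must verify that the mixed points added by $\Gamma^{*}$ do not create a ``detached'' piece, and that the union of the arcs $\Gamma^{*}(C_{xy})$ over pairs $x,y$, together with the single-edge attachments of leftover mixed points, is genuinely connected. The cleanest way is to show directly that every point of $\Gamma^{*}(A)$ lies in the same connected component as a fixed pure point $\Gamma(x_0)$: for pure points this follows from the $8$-path argument above, and for mixed points from the one-step attachment to an adjacent pure point of $\Gamma(A)$, which in turn connects to $\Gamma(x_0)$. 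I would also double-check the monotonicity of $\Gamma^{*}$ explicitly, since the whole forward argument leans on $\Gamma^{*}(C)\subseteq\Gamma^{*}(A)$.
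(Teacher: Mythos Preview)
Your proposal is correct and follows essentially the same approach as the paper's own proof: for the forward direction, both arguments use Theorem \ref{camino*} to turn an $8$-path $C\subseteq A$ between two points into an arc $\Gamma^{*}(C)\subseteq\Gamma^{*}(A)$, and then attach any mixed point of $\Gamma^{*}(A)$ to a neighboring pure point of $\Gamma(A)$; for the converse, both invoke $A=\Gamma^{-1}(\Gamma^{*}(A))$ together with Proposition \ref{conexo-8}. Your explicit flagging of the monotonicity $\Gamma^{*}(C)\subseteq\Gamma^{*}(A)$ is a point the paper uses silently, so your write-up is if anything slightly more careful on that step.
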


\begin{proof} 
($\Rightarrow$) Let us suppose that $A$ is 8-connected. To show that $\Gamma^*(A)$ is connected, it suffices to prove that for every pair of non-adjacent points $x, y \in \Gamma^*(A)$, there exists a path between them in $\Gamma^*(A)$. Fix $x, y \in \Gamma^*(A)$ and consider the following cases:

\medskip 

\noindent \textbf{Case 1:} Suppose that $x$ and $y$ are pure points. Then $x, y \in \Gamma(A)$, and therefore $\Gamma^{-1}(x)$, $\Gamma^{-1}(y) \in A$. Since $A$ is 8-connected, there exists an 8-path $C$ from $\Gamma^{-1}(x)$ to $\Gamma^{-1}(y)$. By Theorem \ref{camino*}, $\Gamma^*(C)$ is a path from $x$ to $y$ in $\Gamma^*(A)$.

\medskip 

\noindent \textbf{Case 2:} Suppose that $x$ is pure and $y$ is mixed. Since $y \in \Gamma^*(A)$, either $cl(y) \subseteq \Gamma^*(A)$ or $N(y) \subseteq \Gamma^*(A)$. In both cases, there exist $w_1, w_2 \in \Gamma^*(A)$ such that $w_1$ and $w_2$ are pure points adjacent to $y$. Thus, $\Gamma^{-1}(w_1), \Gamma^{-1}(w_2) \in A$. Since $A$ is 8-connected, there exists an 8-path $C \subseteq A$ from $\Gamma^{-1}(x)$ to $\Gamma^{-1}(w_1)$. By Theorem \ref{camino*}, $\Gamma^{*}(C)$ is a path from $x$ to $w_1$. Note that $\Gamma^{*}(C) \cup \{y\}$ is connected, as $y$ is adjacent to $w_1$. Therefore, there exists a path from $x$ to $y$ in $\Gamma^{*}(C) \cup \{y\} \subseteq \Gamma^*(A)$.
      
\medskip

\noindent \textbf{Case 3:} Suppose that both $x$ and $y$ are mixed points. Using reasoning analogous to the previous case, it follows that there exists a path from $x$ to $y$ in $\Gamma^*(A)$.

\medskip
 
($\Leftarrow$) Assume that $\Gamma^*(A)$ is connected. Since $A = \Gamma^{-1}(\Gamma^*(A))$, $A$ is 8-connected by Proposition \ref{conexo-8}.

\end{proof}

\begin{propo}
\label{cont_conex}
If $\Gamma(A)$ is connected, then $\Gamma^{*}(A)$ is connected.
\end{propo}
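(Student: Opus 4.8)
The plan is to derive this from the results already established. The key observation is that $\Gamma(A)$ being connected means, by Theorem \ref{4-caminoplano}, that $A$ is $4$-connected in $\mathbb{Z}^2$. Since every pair of $4$-adjacent points is also $8$-adjacent, a $4$-connected set is automatically $8$-connected; equivalently, any $4$-path is in particular connecting the points in the $8$-adjacency sense, so by Proposition \ref{curvasimple} the set $A$ is $8$-connected. Then Theorem \ref{8-conexida} gives immediately that $\Gamma^{*}(A)$ is connected in $\mathbb{K}^2$.

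So the proof is essentially a two-line chain: $\Gamma(A)$ connected $\Rightarrow$ $A$ is $4$-connected (Theorem \ref{4-caminoplano}) $\Rightarrow$ $A$ is $8$-connected (since $4$-connectivity implies $8$-connectivity, as $N_4(p)\subseteq N_8(p)$) $\Rightarrow$ $\Gamma^{*}(A)$ connected (Theorem \ref{8-conexida}). The only point that deserves a sentence of justification is the middle implication: if $x,y\in A$ then there is a $4$-path $P\subseteq A$ with endpoints $x,y$ by Proposition \ref{curvasimple}; a $4$-path is also an $8$-path in the weak sense needed here — more carefully, a $4$-path need not literally be an $8$-path as a set (extra $8$-adjacencies among its points could appear), but it still witnesses that $x$ and $y$ lie in the same $8$-component, so one should phrase the argument in terms of components rather than paths, or simply note that $8$-components are unions of $4$-components. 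I will phrase it via components to avoid that subtlety.

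I do not anticipate a real obstacle here; the statement is a corollary and the work has all been done in the preceding theorems. The only thing to be careful about is not to overclaim that a $4$-path is an $8$-path — hence the component-based wording. A sample write-up:

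\begin{proof}
Suppose $\Gamma(A)$ is connected. By Theorem \ref{4-caminoplano}, $A$ is $4$-connected. Since $N_4(p)\subseteq N_8(p)$ for every $p\in\mathbb{Z}^2$, any two points of $A$ that are $4$-adjacent are also $8$-adjacent; hence any partition of $A$ into two nonempty sets that are not $8$-adjacent would also be a partition into two nonempty sets that are not $4$-adjacent, contradicting the $4$-connectedness of $A$. Therefore $A$ is $8$-connected, and by Theorem \ref{8-conexida} we conclude that $\Gamma^{*}(A)$ is connected in $\mathbb{K}^2$.
\end{proof}
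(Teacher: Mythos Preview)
Your proof is correct, and there is no circularity: Theorem \ref{8-conexida} is proved in the paper before Proposition \ref{cont_conex} and does not rely on it. However, your route is genuinely different from the paper's. The paper gives a direct, self-contained argument: since $\Gamma^{*}(A)=\Gamma(A)\cup M$, where $M$ is the set of mixed points $x$ with $N(x)\subseteq\Gamma(A)\cup\{x\}$ or $cl(x)\subseteq\Gamma(A)\cup\{x\}$, each such $x$ is adjacent to a point of $\Gamma(A)$, so $\Gamma(A)\cup\{x\}$ is connected; iterating over all $x\in M$ keeps the set connected. Your argument instead pulls the statement back to $\mathbb{Z}^2$ via the two equivalences (Theorems \ref{4-caminoplano} and \ref{8-conexida}) and the trivial implication $4$-connected $\Rightarrow$ $8$-connected. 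The paper's approach is more elementary and exposes exactly why $\Gamma^{*}$ preserves connectedness of $\Gamma(A)$ (you are only gluing on points already adjacent to the connected core); your approach is slicker once those theorems are in hand and nicely highlights that the proposition is really a corollary of the two characterizations. Your care in phrasing the middle implication via partitions rather than paths is well placed.
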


\begin{proof}
Let $x$ be a mixed point of $\Gamma^{*}(A)$. Then $N(x) \subseteq \Gamma(A) \cup \{x\}$ or $cl(x) \subseteq \Gamma(A) \cup \{x\}$. Thus, $x$ is adjacent to $\Gamma(A)$, and therefore $\Gamma(A) \cup \{x\}$ is connected. Repeating this argument for every mixed point in $\Gamma^{*}(A) \setminus \Gamma(A)$, we conclude that $\Gamma^{*}(A)$ is connected.
\end{proof}

\begin{propo}
\label{8union}
Let $A$ and $B$ be 8-connected disjoint subsets of $\mathbb{Z}^2$. Suppose there exist a mixed point $e$ and points $a, b$ such that $a \in \Gamma(A) \cap A(e)$ and $b \in \Gamma(B) \cap A(e)$. Then $A \cup B$ is 8-connected. 
\end{propo}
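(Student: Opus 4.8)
The plan is to use Theorem \ref{8-conexida}, which says that an 8-connected set $S\subseteq\mathbb{Z}^2$ corresponds to a connected set $\Gamma^*(S)$ in $\mathbb{K}^2$; so it suffices to show that $\Gamma^*(A\cup B)$ is connected. First I would record the easy inclusion $\Gamma^*(A)\cup\Gamma^*(B)\subseteq \Gamma^*(A\cup B)$, which is immediate from the definition \eqref{defoperador*}: if $x$ is a mixed point with $N(x)\subseteq\Gamma(A)\cup\{x\}$ (or $cl(x)\subseteq\Gamma(A)\cup\{x\}$), then a fortiori $N(x)\subseteq\Gamma(A\cup B)\cup\{x\}$, since $\Gamma$ is injective and $\Gamma(A\cup B)=\Gamma(A)\cup\Gamma(B)$. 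Then, since $A$ and $B$ are 8-connected, Theorem \ref{8-conexida} gives that $\Gamma^*(A)$ and $\Gamma^*(B)$ are each connected subsets of $\mathbb{K}^2$.

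Next I would use the hypothesis to glue these two connected pieces. We have a mixed point $e$ and pure points $a\in\Gamma(A)\cap A(e)$, $b\in\Gamma(B)\cap A(e)$. The key claim is that $e\in\Gamma^*(A\cup B)$. To see this, recall that the adjacency of a mixed point $e=(x,y)$ (say with $x$ even, $y$ odd) consists precisely of the three pure points in $cl(e)$ together with... no — more carefully, from the description of $A(e)$ for mixed $e$ in Section \ref{SeccionTopK}, $A(e)$ consists of exactly four pure points, two forming $N(e)\setminus\{e\}$ and two forming $cl(e)\setminus\{e\}$ (see Figure \ref{adyacencias_plano}). Since $a,b\in A(e)$ are two of these four pure points and $a\in\Gamma(A)$, $b\in\Gamma(B)$, I need to check that $\{a,b\}$ is contained in $N(e)\setminus\{e\}$ or in $cl(e)\setminus\{e\}$, so that the corresponding minimal-open-set or closure condition in \eqref{defoperador*} is met with $\Gamma(A\cup B)$ in place of $\Gamma(A)$. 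This is the step that needs a small case analysis: it is conceivable that $a$ lies in $N(e)$ and $b$ lies in $cl(e)$, in which case $e$ need not be in $\Gamma^*(A\cup B)$ and the argument must be adjusted — for instance by noting that then $a$ and $b$ themselves are adjacent (two pure points sharing the mixed neighbor $e$ in that configuration are adjacent, cf. Proposition \ref{dospuntosmixtos}), so $\Gamma^{-1}(a)$ and $\Gamma^{-1}(b)$ are 4-adjacent hence 8-adjacent by Proposition \ref{4adyacentes}, giving $A\cup B$ 8-connected directly via Proposition \ref{curvasimple}. I expect this dichotomy (whether $\{a,b\}$ sits inside one of $N(e),cl(e)$, or is split between them) to be the main obstacle, and handling the split case as just sketched is the crux.

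Once $e\in\Gamma^*(A\cup B)$ is established (in the unsplit case), the set $\Gamma^*(A)\cup\{e\}\cup\Gamma^*(B)$ is connected: $e$ is adjacent to $a\in\Gamma^*(A)$ and to $b\in\Gamma^*(B)$, so $\{a,e\}$ and $\{e,b\}$ are connected two-point sets, and the union of the connected set $\Gamma^*(A)$, the connected set $\{a,e,b\}$ (connected since it is the union of two connected sets sharing the point $e$), and the connected set $\Gamma^*(B)$ is connected because consecutive pieces intersect (in $a$ and in $b$ respectively). Hence $\Gamma^*(A)\cup\{e\}\cup\Gamma^*(B)\subseteq\Gamma^*(A\cup B)$ is a connected set. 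Finally, to conclude that all of $\Gamma^*(A\cup B)$ is connected, I would argue as in Proposition \ref{cont_conex}: the pure points of $\Gamma^*(A\cup B)$ are exactly those of $\Gamma(A\cup B)=\Gamma(A)\cup\Gamma(B)\subseteq\Gamma^*(A)\cup\Gamma^*(B)$, and every remaining point of $\Gamma^*(A\cup B)$ is a mixed point $z$ with $N(z)$ or $cl(z)$ contained in $\Gamma(A\cup B)\cup\{z\}$, hence $z$ is adjacent to a pure point already in our connected set; adjoining such $z$'s one at a time preserves connectedness. Therefore $\Gamma^*(A\cup B)$ is connected, and by Theorem \ref{8-conexida}, $A\cup B$ is 8-connected.
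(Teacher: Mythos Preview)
Your proof is correct, but it takes a considerably longer path than the paper's argument. The paper dispenses with the $\Gamma^*$ machinery entirely and proves the statement in two lines: since $a,b\in A(e)$ with $e$ mixed and $a,b$ pure, either $a$ and $b$ are adjacent (your ``split'' case), or $e$ is the \emph{unique} mixed point adjacent to both (your ``unsplit'' case); by the two parts of Proposition~\ref{4adyacentes} this means $\Gamma^{-1}(a)$ and $\Gamma^{-1}(b)$ are respectively 4-adjacent or 8-adjacent-but-not-4-adjacent, hence in either case 8-adjacent, and since $A$ and $B$ are already 8-connected, $A\cup B$ is 8-connected.

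You actually discover this direct argument in your split case, but then in the unsplit case you switch to the heavier route through Theorem~\ref{8-conexida}, building a connected subset of $\Gamma^*(A\cup B)$ containing $e$ and then extending to all of $\Gamma^*(A\cup B)$ as in Proposition~\ref{cont_conex}. That works, but it is unnecessary: in the unsplit case $\{a,b\}=N(e)\setminus\{e\}$ or $\{a,b\}=cl(e)\setminus\{e\}$, so $a$ and $b$ are non-adjacent pure points sharing the single mixed neighbor $e$, and Proposition~\ref{4adyacentes}(2) gives 8-adjacency of the preimages immediately. The paper's approach buys brevity and avoids invoking the equivalence in Theorem~\ref{8-conexida}; your approach is a valid alternative that illustrates how $\Gamma^*$ can be used, at the cost of the extra closure step at the end.
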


\begin{proof}
Since $A$ and $B$ are 8-connected, it suffices to show that $A$ and $B$ have 8-adjacent points. As $a, b \in A(e)$, we have that $b$ is adjacent to $a$ or $e$ is the only mixed point such that $a, b \in A(e)$. By Proposition \ref{4adyacentes}, it follows that $\Gamma^{-1}(a)$ and $\Gamma^{-1}(b)$ are 8-adjacent.
\end{proof}
\section{ Transformation of Components}
\label{preserva-componentes}

In this section, we analyze how to transform a component of a subset $S \subseteq \mathbb{Z}^2$ into a component of $\Gamma(S)$. It is not necessarily true that if $A$ and $B$ are the 8-components of $A \cup B$, then $\Gamma(A)$ is a component of $\Gamma(A \cup B)$. For this reason, we introduce an operator $[\cdot, \cdot]$ that transforms a pair $(A, J)$ into a subset of $\mathbb{K}^2$, where $A \subseteq \mathbb{Z}^2$ and $J \subseteq \mathbb{K}^2$:
$$
[A,J]=\{x\in \mathbb{K}^2 \mid A(x)\subseteq \Gamma(A)\cup J \text{ and } x \text{ mixed}\}.
 $$
We will write $A_J$ instead of $[A, J]$ to simplify the notation.

The set $A_J$ is used when $J$ is a Jordan curve. Let $e \in A_J$. Then, $A(e) \subseteq \Gamma(A) \cup J$. Recall that $A(e)$ is a Jordan curve; therefore, if $A(e) \subseteq J$, then $A(e) = J$. Since $e$ is mixed, $A(e)$ contains four points. Thus, if $J$ has more than four points, it follows that $A(e) \cap \Gamma(A) \neq \emptyset$, which means  that $e$ is adjacent to $\Gamma(A)$. We formalize this observation as follows:

\begin{propo}
\label{AJ} 
Let $J$ be a Jordan curve in $\mathbb{K}^{2}$ with more that four points, and let $A \subseteq \mathbb{Z}^2 \setminus \Gamma^{-1}(J)$. Every point in $A_J$ is adjacent to $\Gamma(A)$. \end{propo}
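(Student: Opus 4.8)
The statement is essentially already argued in the paragraph immediately preceding it, so the plan is just to make that reasoning precise. Fix a point $e \in A_J$. By definition of $A_J$ we have $A(e) \subseteq \Gamma(A) \cup J$, and $e$ is a mixed point. The goal is to show $A(e) \cap \Gamma(A) \neq \emptyset$, i.e.\ that $e$ is adjacent to at least one point of $\Gamma(A)$.

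First I would recall the structure of $A(e)$ for a mixed point: by the description of adjacency in $\mathbb{K}^2$ given after Proposition~\ref{dospuntosmixtos} (and illustrated in Figure~\ref{adyacencias_plano}), when $e$ is mixed, $A(e)$ consists of exactly four points, all of which are pure. Next, suppose for contradiction that $A(e) \cap \Gamma(A) = \emptyset$; then $A(e) \subseteq J$. Now I invoke the key fact, already recorded in the excerpt, that $A(e)$ is itself a Jordan curve (this is stated right before Proposition~\ref{AJ}, and follows from Proposition~\ref{curvacarac} together with the fact that for a mixed point $e$ the four-point set $A(e)$ is connected and each of its points has exactly two adjacent points within it — indeed $A(e)$ is precisely the unique four-point Jordan curve). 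Since $A(e) \subseteq J$ and $A(e)$ is a Jordan curve with $|A(e)| = 4$, while $J$ is a Jordan curve — and a Jordan curve has no proper subset that is again a Jordan curve, because removing any point leaves an arc, which is not a Jordan curve — we must have $A(e) = J$. But then $|J| = |A(e)| = 4$, contradicting the hypothesis that $J$ has more than four points. Hence $A(e) \cap \Gamma(A) \neq \emptyset$, so $e$ is adjacent to $\Gamma(A)$, as claimed. Since $e \in A_J$ was arbitrary, every point of $A_J$ is adjacent to $\Gamma(A)$.

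The only point requiring a little care — and the one I would flag as the mild obstacle — is justifying that a Jordan curve cannot properly contain another Jordan curve, or, what amounts to the same thing here, that $A(e)$ being a four-point Jordan curve forces $A(e) = J$ once $A(e) \subseteq J$. The clean way is: if $A(e) \subsetneq J$, pick $p \in J \setminus A(e)$; then $A(e) \subseteq J \setminus \{p\}$, and $J \setminus \{p\}$ is an arc by definition of Jordan curve. An arc is homeomorphic to a finite interval of $\mathbb{K}$, hence has two points with singleton adjacency (Theorem~\ref{arcosycaminos}), so no subset of an arc of size $\geq 4$ can have the property that every point has exactly two adjacent points inside it — yet $A(e)$ does, being a Jordan curve (Proposition~\ref{curvacarac}(3)). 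This contradiction gives $A(e) = J$, hence $|J| = 4$, against the hypothesis. Everything else is a direct unwinding of definitions, so the write-up should be short.
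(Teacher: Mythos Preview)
Your proposal is correct and follows essentially the same route as the paper: the paper's argument is exactly the paragraph preceding the proposition, which you have faithfully expanded. The only addition is your justification that $A(e)\subseteq J$ forces $A(e)=J$, a step the paper states without proof; your arc-based argument for this is valid, though a slightly quicker route is to note that since each $x\in A(e)$ already has two $J$-neighbours inside $A(e)$ (by Proposition~\ref{curvacarac}(3) applied to both Jordan curves), no point of $J\setminus A(e)$ can be adjacent to $A(e)$, contradicting connectedness of $J$.
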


\begin{propo} 
\label{conexo1} 
Let $J$ be a Jordan curve in $\mathbb{K}^{2}$ that contains only pure points and has more than four points. If $A \subseteq \mathbb{Z}^2 \setminus \Gamma^{-1}(J)$ is $8$-connected, then $\Gamma^{*}(A) \cup A_J$ is connected and satisfies $\Gamma^{*}(A) \cup A_J \subseteq \mathbb{K}^2 \setminus J$. 
\end{propo}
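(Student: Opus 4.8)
The goal splits into two parts: first that $\Gamma^{*}(A)\cup A_J$ is connected, and second that it avoids $J$. I would dispose of the containment first since it is the quicker half. A point of $\Gamma^{*}(A)$ is either a pure point of $\Gamma(A)$, which cannot lie in $J$ because $A\subseteq \mathbb{Z}^2\setminus\Gamma^{-1}(J)$ forces $\Gamma(A)\cap J=\emptyset$, or a mixed point, which cannot lie in $J$ at all since $J$ consists only of pure points. For a point $e\in A_J$, $e$ is mixed, so again $e\notin J$. Hence $\Gamma^{*}(A)\cup A_J\subseteq \mathbb{K}^2\setminus J$.

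For connectedness I would argue that every point of $(\Gamma^{*}(A)\cup A_J)$ can be connected, inside the set, to $\Gamma(A)$, and then invoke that $\Gamma^{*}(A)$ itself is connected (this follows from $A$ being $8$-connected via Theorem~\ref{8-conexida}, or alternatively, since $\Gamma(A)$ connected would give it by Proposition~\ref{cont_conex} — but in general $\Gamma(A)$ need not be connected when $A$ is only $8$-connected, so I rely on Theorem~\ref{8-conexida}). Concretely: by Theorem~\ref{8-conexida}, $\Gamma^{*}(A)$ is connected. Now take any $e\in A_J$. By Proposition~\ref{AJ} (applicable because $J$ has more than four points and $A\subseteq\mathbb{Z}^2\setminus\Gamma^{-1}(J)$), $e$ is adjacent to some point $a\in\Gamma(A)$. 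Since $a\in\Gamma(A)\subseteq\Gamma^{*}(A)$, the set $\{e\}\cup\Gamma^{*}(A)$ is connected (adding a point adjacent to a point of a connected set keeps it connected, because $\{e,a\}$ is connected and meets $\Gamma^{*}(A)$). Doing this simultaneously for all $e\in A_J$: the union $\Gamma^{*}(A)\cup A_J = \Gamma^{*}(A)\cup\bigcup_{e\in A_J}\{e\}$ is a union of connected sets each of which meets the connected set $\Gamma^{*}(A)$, hence is connected.

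The one point that needs a little care — and which I expect to be the main obstacle, modest as it is — is the claim that each $e\in A_J$ is genuinely adjacent to a point of $\Gamma(A)$ rather than only to points of $J$. This is exactly the content of Proposition~\ref{AJ}, whose proof uses that $A(e)$ is a four-point Jordan curve while $J$ has strictly more than four points, so $A(e)\not\subseteq J$ and therefore $A(e)\cap\Gamma(A)\neq\emptyset$. I would make sure to state explicitly that the hypothesis ``$J$ has more than four points'' is used precisely here, and that ``$J$ contains only pure points'' is used only in the containment part (to rule out mixed points of $\Gamma^{*}(A)\cup A_J$ from lying in $J$). Assembling these observations gives the result; no case analysis on the type of point in $A_J$ is needed beyond noting they are all mixed.
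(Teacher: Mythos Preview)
Your proposal is correct and follows essentially the same approach as the paper: invoke Theorem~\ref{8-conexida} to obtain that $\Gamma^{*}(A)$ is connected, use Proposition~\ref{AJ} (via the hypothesis $|J|>4$) to attach each mixed point of $A_J$ to $\Gamma(A)\subseteq\Gamma^{*}(A)$, and for the containment observe that the pure points of $\Gamma^{*}(A)\cup A_J$ are exactly $\Gamma(A)$ while $J$ has no mixed points. Your write-up is slightly more explicit about where each hypothesis is used, but the argument is the same.
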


\begin{proof} 
By Proposition \ref{8-conexida}, $\Gamma^{*}(A)$ is connected. Since $|J| \geq 5$, by Proposition \ref{AJ}, every point in $A_J$ is adjacent to $\Gamma(A) \subseteq \Gamma^{*}(A)$. Thus, $\Gamma^{*}(A) \cup A_J$ is connected.

It is clear that $\Gamma(A)$ consists of the pure points of $\Gamma^{*}(A) \cup A_J$, and since $J$ does not contain mixed points, it follows that $\Gamma^{*}(A) \cup A_J \subseteq \mathbb{K}^2 \setminus J$. 
\end{proof}

\begin{propo}
\label{conexo2}
Let $J$ be a Jordan curve in $\mathbb{K}^{2}$. If $A \subseteq \mathbb{Z}^2 \setminus \Gamma^{-1}(J)$ is $4$-connected, then $\Gamma^{*}(A) \cup A_J$ is connected. \end{propo}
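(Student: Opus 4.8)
The statement is the analogue of Proposition \ref{conexo1} but with ``$4$-connected'' in place of ``$8$-connected'' and no hypothesis on $J$ consisting of pure points, and correspondingly only the conclusion that $\Gamma^*(A)\cup A_J$ is connected (the containment in $\mathbb{K}^2\setminus J$ is dropped, which is expected since $J$ may now contain mixed points). The plan is to proceed exactly as in the proof of Proposition \ref{conexo1}, replacing the single ingredient that changes. Since $A$ is $4$-connected, it is in particular $8$-connected (a $4$-path is an $8$-path, so by Proposition \ref{curvasimple} $4$-connectivity implies $8$-connectivity), hence by Theorem \ref{8-conexida} the set $\Gamma^*(A)$ is connected. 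Alternatively, and more directly, one can invoke Theorem \ref{4-caminoplano} to get that $\Gamma(A)$ is connected, and then Proposition \ref{cont_conex} to conclude that $\Gamma^*(A)$ is connected; either route works.

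The remaining step is to attach the points of $A_J$ to $\Gamma^*(A)$. First I would handle the trivial bookkeeping: if $A=\emptyset$ then $\Gamma^*(A)=\emptyset$ and $A_J=\{x\ \text{mixed}\mid A(x)\subseteq J\}$, and since $A(x)$ is a Jordan curve contained in $J$ forces $A(x)=J$ while $|A(x)|=4$, this set is empty unless $J$ is the four-point Jordan curve; the paper's convention is that Jordan curves have more than four points, so $A_J=\emptyset$ and the statement holds vacuously. Assuming $A\neq\emptyset$, let $e\in A_J$; then $A(e)\subseteq\Gamma(A)\cup J$. The key point is that $A(e)\cap\Gamma(A)\neq\emptyset$: since $e$ is mixed, $A(e)$ has exactly four points, so if $|J|>5$ this is immediate as in Proposition \ref{AJ}. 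For the borderline cases $|J|=5$ one checks directly (using that $A(e)$ is itself a four-point Jordan curve and $J$ has five points, so $A(e)\not\subseteq J$), and here I should be careful to note that the proposition's hypothesis does not explicitly say $|J|>4$, but the standing convention ``We always work with Jordan curves with more than four points'' stated after the definition of Jordan curve applies. Thus every $e\in A_J$ is adjacent to some point of $\Gamma(A)\subseteq\Gamma^*(A)$, so $\Gamma^*(A)\cup\{e\}$ is connected for each such $e$, and adding the points of $A_J$ one at a time preserves connectivity. Hence $\Gamma^*(A)\cup A_J$ is connected.

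The only subtlety — and the one place the argument is not a verbatim copy of Proposition \ref{conexo1} — is that here $J$ may contain mixed points, so one cannot conclude $\Gamma^*(A)\cup A_J\subseteq\mathbb{K}^2\setminus J$, which is precisely why that clause is absent from the statement. There is essentially no genuine obstacle: the whole content is that $4$-connectivity is a strengthening of $8$-connectivity (or, dually, that $\Gamma(A)$ connected implies $\Gamma^*(A)$ connected via Proposition \ref{cont_conex}), plus the observation from Proposition \ref{AJ} that points of $A_J$ dock onto $\Gamma(A)$. I would write it in three or four lines, citing Theorem \ref{4-caminoplano}, Proposition \ref{cont_conex}, and Proposition \ref{AJ}.
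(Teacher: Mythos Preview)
Your proposal is correct and matches the paper's proof essentially line for line: the paper shows $\Gamma^{*}(A)$ is connected via Propositions~\ref{4-caminoplano} and~\ref{cont_conex}, then invokes Proposition~\ref{AJ} to see that each $e\in A_J$ is adjacent to $\Gamma(A)$, concluding in three lines exactly as you outlined. Your extra bookkeeping on the $A=\emptyset$ case and the $|J|>4$ convention is harmless but not needed, since Proposition~\ref{AJ} already incorporates the latter hypothesis.
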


\begin{proof}
$\Gamma^{*}(A)$ is connected by Propositions \ref{4-caminoplano} and \ref{cont_conex}. If $A_J = \emptyset$, there is nothing to prove. Otherwise, let $e \in A_J$. By Proposition \ref{AJ}, $e$ is adjacent to $\Gamma(A)$. Consequently, $\Gamma^{*}(A) \cup A_J$ is connected. 
\end{proof}

\begin{propo}
\label{disconexo2}
Let $A, B$ be subsets of $\mathbb{Z}^2$. If $A$ and $B$ are not 8-adjacent, then $\Gamma(A) \cup \Gamma(B)$ is disconnected. 
\end{propo}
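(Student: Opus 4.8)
\textbf{Proof proposal for Proposition \ref{disconexo2}.}

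The plan is to argue by contradiction in the contrapositive direction: assume $\Gamma(A)\cup\Gamma(B)$ is connected and deduce that $A$ and $B$ must be $8$-adjacent. So suppose $\Gamma(A)\cup\Gamma(B)$ is connected. Pick any $a\in A$ and $b\in B$; since connectivity equals arc-connectivity in Alexandroff spaces (Theorem \ref{arcosycaminos}), there is an arc inside $\Gamma(A)\cup\Gamma(B)$ joining $\Gamma(a)$ and $\Gamma(b)$. Among all arcs in $\Gamma(A)\cup\Gamma(B)$ whose two endpoints lie one in $\Gamma(A)$ and one in $\Gamma(B)$, choose one, call it $D$, of minimal cardinality, with endpoints $\Gamma(x)\in\Gamma(A)$ and $\Gamma(y)\in\Gamma(B)$. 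This is exactly the minimal-arc device already used in the proof of Proposition \ref{conexo-8}.

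Next I would show that $\Gamma(x)$ and $\Gamma(y)$ are the \emph{only} pure points of $D$. Every pure point of $D$ lies in $\Gamma(A)\cup\Gamma(B)$; if some interior pure point $w$ of $D$ belonged to, say, $\Gamma(A)$, then the subarc of $D$ from $w$ to $\Gamma(y)$ would be a strictly shorter arc joining a point of $\Gamma(A)$ to a point of $\Gamma(B)$, contradicting minimality (and symmetrically if $w\in\Gamma(B)$). Hence $D$ has the form: two pure endpoints $\Gamma(x),\Gamma(y)$ with all other points mixed. Now, since $D$ is an arc with endpoints $\Gamma(x)$ and $\Gamma(y)$, we have $A(\Gamma(x))\cap D=\{w_1\}$ and $A(\Gamma(y))\cap D=\{w_2\}$ with $w_1,w_2$ mixed (being the neighbours of the endpoints inside $D$, and pure points of $D$ are only the endpoints). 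Each $w_i$ is a non-endpoint of $D$, so $|A(w_i)\cap D|=2$; since the adjacency of a mixed point consists only of pure points, the two $D$-neighbours of $w_1$ are $\Gamma(x)$ and one other pure point of $D$, which must be $\Gamma(y)$ because those are the only pure points; likewise $w_2$ is adjacent to both $\Gamma(x)$ and $\Gamma(y)$.

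Finally I would split into the two cases $w_1=w_2$ and $w_1\ne w_2$. If $w_1\ne w_2$, then $\Gamma(x)$ and $\Gamma(y)$ are two distinct pure points sharing two distinct adjacent mixed points, so by Proposition \ref{dospuntosmixtos} they are adjacent, and then Proposition \ref{4adyacentes}(1) gives that $x$ and $y$ are $4$-adjacent, hence $8$-adjacent, so $A$ and $B$ are $8$-adjacent. If $w_1=w_2$, then this single mixed point is adjacent to both $\Gamma(x)$ and $\Gamma(y)$; either $\Gamma(x)$ and $\Gamma(y)$ are adjacent (handled as above) or $w_1$ is the unique mixed point adjacent to both, and Proposition \ref{4adyacentes}(2) gives that $x$ and $y$ are $8$-adjacent. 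In every case $A$ and $B$ are $8$-adjacent, contradicting the hypothesis; therefore $\Gamma(A)\cup\Gamma(B)$ is disconnected. The main obstacle is the bookkeeping in the step identifying $w_1,w_2$ and ruling out extra pure points of $D$ — i.e., making the minimal-arc argument fully rigorous — but this is essentially a repetition of reasoning already carried out in Proposition \ref{conexo-8}, so it should go through smoothly.
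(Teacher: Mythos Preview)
Your argument ultimately reaches the right conclusion, but you have overlooked the single observation that makes this proposition immediate: the range of $\Gamma$ consists only of pure points, so $\Gamma(A)\cup\Gamma(B)$ contains \emph{no mixed points at all}. The minimal-arc machinery you import from Proposition~\ref{conexo-8} is therefore vacuous here: once you prove that the endpoints $\Gamma(x),\Gamma(y)$ are the only pure points of $D$, and all points of $D$ are pure, you are forced to $D=\{\Gamma(x),\Gamma(y)\}$. In particular your claim that $w_1,w_2$ are mixed is false (there is nothing mixed in $D$); what actually happens is $w_1=\Gamma(y)$ and $w_2=\Gamma(x)$, so the two endpoints are adjacent and Proposition~\ref{4adyacentes}(1) finishes. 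So your proof is salvageable but the mixed-point case analysis is dead code.

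The paper's proof exploits this purity directly and is much shorter: take any arc $C\subseteq\Gamma(A)\cup\Gamma(B)$ from a point of $\Gamma(A)$ to a point of $\Gamma(B)$; since $C$ consists only of pure points, Proposition~\ref{caminor1} gives that $\Gamma^{-1}(C)$ is a $4$-path in $\mathbb{Z}^2$ joining a point of $A$ to a point of $B$, whence $A$ and $B$ are $8$-adjacent. No minimality argument, no case split on mixed points, no appeal to Proposition~\ref{dospuntosmixtos}. The moral is that Proposition~\ref{conexo-8} needed the heavier argument precisely because a general connected $C\subseteq\mathbb{K}^2$ may contain mixed points; here that complication disappears.
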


\begin{proof}
Assume that $\Gamma(A) \cup \Gamma(B)$ is connected, and we will arrive at a contradiction. Let $a \in \Gamma(A)$, $b \in \Gamma(B)$, and $C \subseteq \Gamma(A) \cup \Gamma(B)$ be an arc with endpoints $a$ and $b$. Since $C$ contains only pure points, by Proposition \ref{caminor1}, $\Gamma^{-1}(C)$ is a 4-path with endpoints $\Gamma^{-1}(a) \in A$ and $\Gamma^{-1}(b) \in B$. Since $\Gamma^{-1}(C)$ is 8-connected (as it is a 4-path), it follows that $A$ and $B$ are 8-adjacent, which contradicts the hypothesis. 
\end{proof}

\begin{propo} 
\label{disconexos1}
Let $J$ be a Jordan curve in $\mathbb{K}^{2}$. If $A$ and $B$ are two subsets of $\mathbb{Z}^2 \setminus \Gamma^{-1}(J)$ that are not $8$-adjacent, then $(\Gamma^{*}(A) \cup A_J) \cup (\Gamma^{*}(B) \cup B_J)$ is disconnected. \end{propo}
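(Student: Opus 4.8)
The plan is to argue by contradiction, assuming that $S := (\Gamma^{*}(A) \cup A_J) \cup (\Gamma^{*}(B) \cup B_J)$ is connected while $A$ and $B$ are not $8$-adjacent. The key structural fact is that the pure points of $\Gamma^{*}(A) \cup A_J$ are exactly those of $\Gamma(A)$, and likewise for $B$; the only ``new'' points are mixed. Since $S$ is connected (hence arc-connected in an Alexandroff space), we can pick $a \in \Gamma(A)$ and $b \in \Gamma(B)$ and take an arc $D \subseteq S$ joining them of minimal cardinality among all arcs joining a point of $\Gamma(A)$ to a point of $\Gamma(B)$. Mimicking the argument in Proposition \ref{conexo-8}, minimality forces $a$ and $b$ to be the only pure points of $D$, so every interior point of $D$ is mixed; in particular $|A(a)\cap D| = |A(b)\cap D| = 1$, say $A(a)\cap D = \{w_1\}$ and $A(b)\cap D = \{w_2\}$, with $w_1, w_2$ mixed.

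Next I would rule out the possibility that $D$ has length exactly two, i.e. $w_1 = w_2$. If $w_1 = w_2 =: w$, then $w$ is a mixed point adjacent to both $a$ and $b$; since $a, b$ are not adjacent (they map to non-$8$-adjacent points, wait --- more carefully: if $a$ and $b$ were adjacent then by Proposition \ref{4adyacentes} $\Gamma^{-1}(a)$ and $\Gamma^{-1}(b)$ would be $4$-adjacent, contradicting that $A, B$ are not $8$-adjacent), Proposition \ref{4adyacentes}(2) gives that $w$ is the unique mixed point adjacent to both, whence $\Gamma^{-1}(a)$ and $\Gamma^{-1}(b)$ are $8$-adjacent, again contradicting the hypothesis. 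If instead $D$ has length at least three, then $w_1 \neq w_2$ and the adjacency of a mixed point consists only of pure points; since the interior of $D$ is all mixed, the point of $D$ adjacent to $w_1$ other than $a$ must be pure --- but the only pure points of $D$ are $a$ and $b$, forcing $D$ to have length exactly two after all, a contradiction. (One must phrase this last step carefully: walking along $D$ from $a$, the first interior vertex $w_1$ is mixed, so its two $D$-neighbors are pure, namely $a$ and some other pure point of $D$, which must be $b$; so $D = \{a, w_1, b\}$ and $w_1 = w_2$, reducing to the previous case.) Either way we reach a contradiction, so $S$ is disconnected.

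The one point requiring a little care --- and the place I expect to spend the most effort --- is justifying that $D$, chosen as a subset of the possibly large set $S$, really has its interior consisting entirely of mixed points. This is exactly the minimality argument from Proposition \ref{conexo-8}: if $D$ contained a pure interior point $w$, then $w \in \Gamma(A) \cup \Gamma(B)$; whichever of $\Gamma(A), \Gamma(B)$ it lies in, the sub-arc of $D$ from $w$ to the opposite endpoint is a strictly shorter arc connecting $\Gamma(A)$ to $\Gamma(B)$, contradicting minimality. I would simply cite Proposition \ref{conexo-8} for the template and transcribe the argument. The rest is a short case analysis on the length of $D$ combined with Proposition \ref{4adyacentes} and the description of mixed-point adjacencies, so no genuinely new obstacle arises; the proof is essentially a variant of Propositions \ref{conexo-8} and \ref{disconexo2} adapted to the enlarged sets $\Gamma^{*}(A) \cup A_J$.
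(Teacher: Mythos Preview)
Your argument is correct and is in fact a bit cleaner than the paper's. The paper also argues by contradiction and picks an arc $C$ in $S$, but it takes $C$ to join an arbitrary $a\in\Gamma^{*}(A)\cup A_J$ to an arbitrary $b\in\Gamma^{*}(B)\cup B_J$ and then performs a case analysis on whether $a$ and $b$ are pure or mixed; the pure/pure case is handled by quoting Proposition~\ref{conexo-8} as a black box, while the cases with mixed endpoints require stripping off those endpoints and a nontrivial argument (using Proposition~\ref{8union} and the assumption $|J|>4$) to show that the new pure endpoints land in $\Gamma(A)$ and $\Gamma(B)$ respectively. By choosing $a\in\Gamma(A)$ and $b\in\Gamma(B)$ from the outset and minimizing over all such arcs, you bypass that case analysis entirely: once the interior of $D$ is all mixed, the fact that two mixed points of $\mathbb{K}^2$ are never adjacent forces $|D|\le 3$, and the contradiction follows immediately from Proposition~\ref{4adyacentes}. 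The trade-off is that you re-run the minimality argument of Proposition~\ref{conexo-8} rather than citing it, but the payoff is a shorter, more self-contained proof with no endpoint case split.
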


\begin{proof}
We will assume that $\Gamma^{*}(A) \cup A_J \cup \Gamma^{*}(B) \cup B_J$ is connected and derive a contradiction. Let $a \in \Gamma^{*}(A) \cup A_J$, $b \in \Gamma^{*}(B) \cup B_J$, and $C \subseteq \Gamma^{*}(A) \cup A_J \cup \Gamma^{*}(B) \cup B_J$ be an arc with endpoints $a$ and $b$. Consider the following cases:

\medskip 

\noindent \textbf{Case 1:} Suppose $a$ and $b$ are pure points. Then $a \in \Gamma(A)$ and $b \in \Gamma(B)$. By Proposition \ref{conexo-8}, $\Gamma^{-1}(C)$ is $8$-connected, and thus $A$ and $B$ are $8$-adjacent, which is a contradiction.

\medskip

\noindent\textbf{Case 2:} Suppose $a$ and $b$ are mixed points. Then $C \setminus \{a, b\}$ is an arc with pure endpoints $a_1$ and $b_1$, where $a_1$ is adjacent to $a$ and $b_1$ to $b$. It suffices to show that $a_1 \in \Gamma(A)$ and $b_1 \in \Gamma(B)$, since this would imply (as in Case 1) that $A$ and $B$ are $8$-adjacent, leading to a contradiction.

Suppose instead that $a_1 \notin \Gamma(A)$ and $b_1 \notin \Gamma(B)$. Since $C \subseteq \Gamma^{*}(A) \cup A_J \cup \Gamma^{*}(B) \cup B_J$ and $a_1$ and $b_1$ are pure points, we have $a_1 \in \Gamma(B)$ and $b_1 \in \Gamma(A)$.

We claim that $A(a) \cap \Gamma(A) = \emptyset$. Indeed, if $A(a) \cap \Gamma(A) \neq \emptyset$, then $A \cup B$ would be $8$-connected (by Proposition \ref{8union}, since $a_1 \in A(a) \cap \Gamma(B)$); however, this is impossible, as $A$ and $B$ are $8$-components. Therefore, $A(a) \cap \Gamma(A) = \emptyset$.

Moreover, $A(a)$ is a Jordan curve with 4 points, while $J$ is a Jordan curve with more than five points. Hence, $A(a) \not\subseteq J$, and thus $a \notin A_J$. Since $a \in \Gamma^{*}(A) \cup A_J$, it follows that $N(a) \subseteq \Gamma(A) \cup \{a\}$ or $cl(a) \subseteq \Gamma(A) \cup \{a\}$. As $a_1 \in A(a) = N(a) \cup cl(a)$, we conclude that $a_1 \in \Gamma(A)$, which contradicts our assumption.

Thus, we have shown that $a_1 \in \Gamma(A)$. Similarly, we obtain that $b_1 \in \Gamma(B)$.

\medskip

\noindent\textbf{Case 3:} Suppose $a$ is pure and $b$ is mixed. Then $C \setminus \{b\}$ is an arc with pure endpoints. Using reasoning analogous to Case 2, we conclude that $A$ and $B$ are $8$-adjacent, which is a contradiction.

\end{proof}

\begin{propo}
\label{disconexos3}
Let $J$ be a Jordan curve in $\mathbb{K}^{2}$ such that  $\Gamma^{*}(\Gamma^{-1}(J)) = J$. If $A$ and $B$ are two 4-components of $\mathbb{Z}^2 \setminus \Gamma^{-1}(J)$ such that $\Gamma^{*}(A) \cup A_J \cup \Gamma^{*}(B) \cup B_J \subseteq \mathbb{K}^2 \setminus J$, then $\Gamma^{*}(A) \cup A_J \cup \Gamma^{*}(B) \cup B_J$ is disconnected.
\end{propo}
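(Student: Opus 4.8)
\textbf{Proof proposal for Proposition \ref{disconexos3}.}

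The plan is to reduce this to Proposition \ref{disconexos1}, whose hypothesis is that $A$ and $B$ are not $8$-adjacent (as subsets of $\mathbb{Z}^2\setminus\Gamma^{-1}(J)$). Here we are only told that $A$ and $B$ are distinct $4$-components of $\mathbb{Z}^2\setminus\Gamma^{-1}(J)$, so a priori they could be $8$-adjacent. The key point is that, since $J$ is a Jordan curve with $\Gamma^{*}(\Gamma^{-1}(J))=J$, Proposition \ref{8caminos-gamma*}(ii) tells us that $\Gamma^{-1}(J)$ is a closed $8$-curve in $\mathbb{Z}^2$; moreover $\Gamma^{-1}(J)$ has more than four points because $J$ does. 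Therefore Theorem \ref{Jordank} (Rosenfeld's Jordan curve theorem, applied with $k=8$, $k'=4$) applies: $\mathbb{Z}^2\setminus\Gamma^{-1}(J)$ has exactly two $4$-components, so $A$ and $B$ are precisely those two components. So the first step is to record that $\Gamma^{-1}(J)$ is a closed $8$-curve with more than four points and that $\{A,B\}$ is the complete list of $4$-components of its complement.

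The second step is the heart of the argument: to show that these two $4$-components cannot be $8$-adjacent. Suppose toward a contradiction that there are $p\in A$ and $q\in B$ that are $8$-adjacent. If they were also $4$-adjacent, then $A$ and $B$ would be $4$-adjacent, contradicting that they are distinct $4$-components. So $p$ and $q$ are $8$-adjacent but not $4$-adjacent; writing $q=p_i$ with $i\in\{1,3,5,7\}$ in the labeling of Figure \ref{ejemplosimple}, the points $p_{i-1}$ and $p_{i+1}$ are $4$-adjacent to both $p$ and $q$, hence both lie in $\mathbb{Z}^2\setminus\Gamma^{-1}(J)$ unless one of them lies on $\Gamma^{-1}(J)$. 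Now apply Proposition \ref{diferentescomponentes}: wait — that proposition is about a point $p$ \emph{on} the curve $C$ with a diagonal neighbour $p_i$ on $C$, so it is not directly the configuration we have. Instead I would argue directly: $p_{i-1}$ is $4$-adjacent to $p\in A$, so if $p_{i-1}\notin\Gamma^{-1}(J)$ then $p_{i-1}\in A$; likewise $p_{i-1}$ is $4$-adjacent to $q\in B$, so $p_{i-1}\in B$; impossible since $A\cap B=\emptyset$. Hence $p_{i-1}\in\Gamma^{-1}(J)$, and by the same reasoning $p_{i+1}\in\Gamma^{-1}(J)$. So $p$ is $4$-adjacent to two points of $\Gamma^{-1}(J)$, namely $p_{i-1}$ and $p_{i+1}$, which are opposite neighbours of $p$. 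Translating via Proposition \ref{4adyacentes}(1), $\Gamma(p_{i-1}),\Gamma(p_{i+1})\in A(\Gamma(p))\cap J$; but since $\Gamma^{-1}(J)$ is a closed $8$-curve and $p\notin\Gamma^{-1}(J)$, one checks (using that $p_{i-1},p,p_{i+1}$ are collinear in $\mathbb Z^2$, so $\Gamma(p_{i-1}),\Gamma(p),\Gamma(p_{i+1})$ are collinear pure points in $\mathbb{K}^2$) that $J$ would have to pass "straight through" near $\Gamma(p)$ in a way incompatible with $A(\Gamma(p))\cap J$ being a two-point set disconnected in $A(\Gamma(p))\setminus\{\Gamma(p)\}$ — more simply, $\Gamma(p_{i-1})$ and $\Gamma(p_{i+1})$ are adjacent to a common mixed point $e$ with $\Gamma(p)\in A(e)$, and $\Gamma(p)\notin J$, while tracing $J$ through $\Gamma(p_{i-1})$ and $\Gamma(p_{i+1})$ forces a third point of $J$ into $A(\Gamma(p))$, contradicting Proposition \ref{curvacarac}(3) for $J$ at an appropriate vertex. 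The cleanest route is probably: $A(\Gamma(p))\cap J \supseteq\{\Gamma(p_{i-1}),\Gamma(p_{i+1})\}$ are two pure points that are \emph{not} adjacent (opposite corners), so by Proposition \ref{curvacarac} applied at the mixed point sitting between them — which lies in $\Gamma^{*}(\Gamma^{-1}(J))=J$ — we get that that mixed point has three neighbours in $J$, a contradiction.

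The third step is routine: having shown $A$ and $B$ are not $8$-adjacent, and being given $\Gamma^{*}(A)\cup A_J\cup\Gamma^{*}(B)\cup B_J\subseteq\mathbb{K}^2\setminus J$ as a hypothesis (so the sets are genuinely where Proposition \ref{disconexos1} wants them), apply Proposition \ref{disconexos1} verbatim to conclude that $\Gamma^{*}(A)\cup A_J\cup\Gamma^{*}(B)\cup B_J$ is disconnected. I expect the main obstacle to be precisely the second step: converting "$p$ has two opposite $4$-neighbours on the closed $8$-curve $\Gamma^{-1}(J)$" into a contradiction. This requires either a small case analysis on the Khalimsky adjacency pictures (Figure \ref{adyacencias_plano}) to see that a mixed point lying on $J$ between $\Gamma(p_{i-1})$ and $\Gamma(p_{i+1})$ is forced and then has too many $J$-neighbours, or an appeal to the fact that a closed $8$-curve through two opposite neighbours of an interior point must separate them — essentially a local Jordan-curve argument. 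I would flesh this out with the explicit neighbour labels rather than leave it at the level of "one checks".
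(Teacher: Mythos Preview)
Your reduction to Proposition \ref{disconexos1} is doomed because the goal of Step 2 is false: the two $4$-components of $\mathbb{Z}^2\setminus\Gamma^{-1}(J)$ \emph{can} be $8$-adjacent. This is precisely the reason Rosenfeld's theorem pairs $8$-curves with $4$-connectivity on the complement. Concretely, take the closed $8$-curve $C=\{(1,0),(2,0),(0,1),(3,1),(1,2),(2,2)\}$ (six points, each with exactly two $8$-neighbours in $C$). By Theorem \ref{camino*}(ii), $J=\Gamma^{*}(C)$ is a Jordan curve with $\Gamma^{*}(\Gamma^{-1}(J))=J$. The inside $4$-component is $\{(1,1),(2,1)\}$ and the outside is the rest; the inside point $(1,1)$ is $8$-adjacent to the outside point $(0,0)$. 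So $A$ and $B$ are $8$-adjacent here, and Proposition \ref{disconexos1} does not apply.

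Your attempted contradiction in Step 2 also contains specific errors. With $q=p_i$ ($i$ odd), the points $p_{i-1}$ and $p_{i+1}$ are \emph{not} opposite $4$-neighbours of $p$ and are \emph{not} collinear with $p$; they are the two $4$-neighbours of $p$ that are also $4$-neighbours of $q$, hence diagonal to each other. The unique mixed point $m$ with $\Gamma(p),\Gamma(q)\in A(m)$ has $A(m)=\{\Gamma(p),\Gamma(q),\Gamma(p_{i-1}),\Gamma(p_{i+1})\}$, and the pair $\{\Gamma(p_{i-1}),\Gamma(p_{i+1})\}$ is exactly one of $N(m)\setminus\{m\}$ or $cl(m)\setminus\{m\}$. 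So indeed $m\in\Gamma^{*}(\Gamma^{-1}(J))=J$, but then $A(m)\cap J=\{\Gamma(p_{i-1}),\Gamma(p_{i+1})\}$ has cardinality $2$, which is perfectly compatible with $J$ being a Jordan curve. No third neighbour in $J$ is forced, and no contradiction arises.

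The paper's proof takes a completely different route that does not need $A$ and $B$ to be non-$8$-adjacent. Assuming the union is connected, one picks an arc $C$ in $\Gamma^{*}(A)\cup A_J\cup\Gamma^{*}(B)\cup B_J$ from a point of $\Gamma(A)$ to a point of $\Gamma(B)$ and then systematically replaces each mixed point $m\in C$ by a pure point: since $C$ is an arc, the two points of $A(m)\setminus C$ form either $N(m)\setminus\{m\}$ or $cl(m)\setminus\{m\}$; if both lay in $J$ then $m\in\Gamma^{*}(\Gamma^{-1}(J))=J$, contradicting $C\subseteq\mathbb{K}^2\setminus J$. Hence one of them lies in $\Gamma(A)\cup\Gamma(B)$ and can replace $m$, keeping $C$ an arc. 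After all replacements, $C$ consists of pure points, so by Proposition \ref{caminor1}, $\Gamma^{-1}(C)$ is a $4$-path joining $A$ to $B$ inside $\mathbb{Z}^2\setminus\Gamma^{-1}(J)$, contradicting that $A$ and $B$ are distinct $4$-components. The hypothesis $\Gamma^{*}(\Gamma^{-1}(J))=J$ is used exactly at the replacement step, not to control $8$-adjacency of $A$ and $B$.
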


\begin{proof}
We will assume that $\Gamma^{*}(A) \cup A_J \cup \Gamma^{*}(B) \cup B_J$ is connected and reach a contradiction. Let $a_1 \in \Gamma(A)$, $b_1 \in \Gamma(B)$, and $C \subseteq \Gamma^{*}(A) \cup A_J \cup \Gamma^{*}(B) \cup B_J$ be an arc with endpoints $a_1$ and $b_1$. We will show that we can obtain an arc $C$ consisting solely of pure points. This is impossible, as it would imply that $C \subseteq \Gamma(A) \cup \Gamma(B)$, and by Proposition \ref{caminor1}, $\Gamma^{-1}(C)$ would be a 4-path connecting $A$ and $B$. This contradicts the fact that $A$ and $B$ are 4-components of $\mathbb{Z}^2 \setminus \Gamma^{-1}(J)$.

The strategy is to start with an arbitrary arc $C$ and replace each mixed point in $C$ with a pure point. Let $m \in C$ be a mixed point. We will construct an arc $C_1$ of the form $C \setminus \{m\} \cup \{w\}$ from $a_1$ to $b_1$ such that $C_1 \subseteq \Gamma^{*}(A) \cup A_J \cup \Gamma^{*}(B) \cup B_J$. Using a similar argument, we replace each mixed point in $C$ with a pure point and construct an arc $C'$ from $a_1$ to $b_1$ such that $C'$ consists only of pure points and $C' \subseteq \Gamma(A) \cup \Gamma(B)$. As noted above, this leads to a contradiction.

Let $m \in C$ be a mixed point. Let $w_1, w_2, w_3, w_4$ be such that $A(m) \cap C = \{w_1, w_2\}$ and $A(m) \setminus C = \{w_3, w_4\}$. Since $C$ is an arc and does not loop around a mixed point, it follows that
$$
N(m) = \{w_3, w_4, m\} \quad \text{or} \quad cl(m) = \{w_3, w_4, m\}.
$$
We claim that either $w_3$ or $w_4$ is in $\Gamma(A) \cup \Gamma(B)$. Indeed, if this were not the case,  $w_3$ and $w_4$ must belong to $J$ (since $\{w_3, w_4\} \cap (\Gamma(A) \cup \Gamma(B)) = \emptyset$). Thus, $N(m) \subseteq \Gamma(\Gamma^{-1}(J)) \cup \{m\}$ or $cl(m) \subseteq \Gamma(\Gamma^{-1}(J)) \cup \{m\}$. Hence, $m \in \Gamma^{*}(\Gamma^{-1}(J)) = J$, which is a contradiction, as $m \in C$ and  $C \subseteq  \mathbb{K}^2 \setminus J$. This shows that either $w_3$ or $w_4$ must be in $\Gamma(A) \cup \Gamma(B)$.

Assume that $w_3 \in \Gamma(A) \cup \Gamma(B)$; the other case is analogous. Let $C_1 = (C \setminus \{m\}) \cup \{w_3\}$. Then $C_1$ is an arc from $a_1$ to $b_1$ such that $C_1 \subseteq \Gamma^{*}(A) \cup A_J \cup \Gamma^{*}(B) \cup B_J$.
\end{proof}

\section{ Jordan curve theorem for $\mathbb{K}^2$}
\label{sec-curvaJordan}

Recall that the Jordan theorem in the plane $\mathbb{K}^2$ states that if $J$ is a Jordan curve in $\mathbb{K}^2$, then $\mathbb{K}^2 \setminus J$ has two components. The approach we will use to prove the theorem is based the analogous theorem for $\mathbb{Z}^2$ (Theorem \ref{Jordank}) applied to $\Gamma^{-1}(J)$. Suppose for a moment that $\Gamma^{-1}(J)$ is a  closed 4-curve and let $A$, $B$ be the 8-components of $\mathbb{Z}^2 \setminus \Gamma^{-1}(J)$. It is not true that $\Gamma(A)$ and $\Gamma(B)$ are the components of $\mathbb{K} \setminus J$; they may not even be connected, as illustrated in Figure \ref{contraejemplo1}. Resolving this technical issue required developing the ideas presented in the previous sections. We will show that, under some conditions, the following holds:
\begin{equation} \label{componentes} \mathbb{K}^{2} \setminus J = (\Gamma^{*}(A) \cup A_J) \, \cup \, (\Gamma^{*}(B) \cup B_J) \end{equation}
and moreover, $\Gamma^{*}(A) \cup A_J$ and $\Gamma^{*}(B) \cup B_J$ are in fact the two components of $\mathbb{K}^{2} \setminus J$.

\bigskip

\begin{figure}[h]
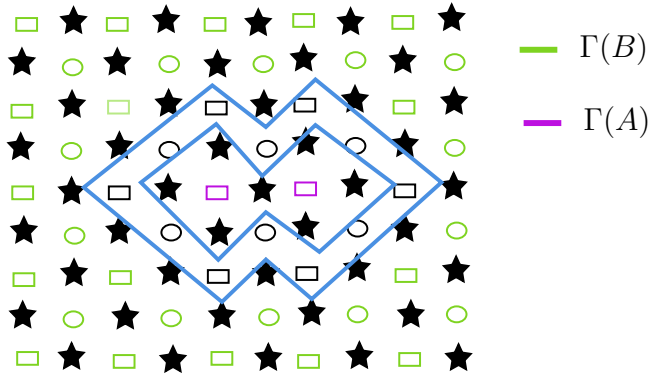

    \centering
\tikzset{every picture/.style={line width=0.75pt}} 


    \caption{$A$ and $B$ are the 8-components of $\mathbb{Z}^2\setminus \Gamma^{-1}(J)$, but  $\Gamma (A)$ is not connected.}
    \label{contraejemplo1}
\end{figure}

We begin by analyzing Jordan curves composed solely of pure points

\begin{teo} \label{solopuros} If $J$ is a Jordan curve in $\mathbb{K}^2$ consisting only of pure points and $|J| >4$, then $\mathbb{K}^2 \setminus J$ has two components. Moreover, each  point of $J$ is adjacent to both components of $\mathbb{K}^2 \setminus J$.
\end{teo}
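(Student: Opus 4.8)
The plan is to apply Rosenfeld's Jordan curve theorem (Theorem \ref{Jordank}) to $\Gamma^{-1}(J)$ and then transfer the resulting decomposition back to $\mathbb{K}^2$ using the machinery developed in Sections \ref{sec-preser-curvas}--\ref{preserva-componentes}. Since $J$ consists only of pure points, Proposition \ref{4-curva}(i) tells us that $\Gamma^{-1}(J)$ is a closed $4$-curve in $\mathbb{Z}^2$; moreover, since $\Gamma$ is injective and $|J|>4$, the curve $\Gamma^{-1}(J)$ has more than four points. Hence Theorem \ref{Jordank} applies with $k=4$, $k'=8$: $\mathbb{Z}^2\setminus\Gamma^{-1}(J)$ has exactly two $8$-components $A$ and $B$, and each point of $\Gamma^{-1}(J)$ has an $8$-neighbor in each of them.

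The candidates for the two components of $\mathbb{K}^2\setminus J$ are $\Gamma^{*}(A)\cup A_J$ and $\Gamma^{*}(B)\cup B_J$. First I would verify each of these sets is connected: this is exactly Proposition \ref{conexo1}, using that $A$ and $B$ are $8$-connected, $J$ has only pure points, and $|J|\geq 5$; the same proposition also gives $\Gamma^{*}(A)\cup A_J\subseteq\mathbb{K}^2\setminus J$ and likewise for $B$. Next, I would establish the covering identity \eqref{componentes}: that every point of $\mathbb{K}^2\setminus J$ lies in one of the two sets. A pure point $x\notin J$ has $\Gamma^{-1}(x)\in\mathbb{Z}^2\setminus\Gamma^{-1}(J)$, hence $\Gamma^{-1}(x)\in A$ or $\Gamma^{-1}(x)\in B$, so $x\in\Gamma(A)\subseteq\Gamma^{*}(A)$ or $x\in\Gamma(B)$. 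A mixed point $x\notin J$ has $A(x)$ a Jordan curve of four pure points, none of which lies in $J$ unless $A(x)=J$, which is excluded since $|J|>4$; so every pure point of $A(x)$ lies in $\Gamma(A)\cup\Gamma(B)$. The key sub-step is to show all four pure points of $A(x)$ fall on the \emph{same} side: if two adjacent ones (forming an edge of $N(x)$ or $cl(x)$) belonged to $\Gamma(A)$ and $\Gamma(B)$ respectively, then Proposition \ref{8union} (with $e=x$) would force $A\cup B$ to be $8$-connected, contradicting that $A$ and $B$ are distinct $8$-components. Once $N(x)\subseteq\Gamma(A)\cup\{x\}$ (or $cl(x)\subseteq\Gamma(A)\cup\{x\}$), by definition $x\in\Gamma^{*}(A)$ — actually more carefully, $x\in A_J$ or $x\in\Gamma^{*}(A)$ depending on whether all of $A(x)$ is in $\Gamma(A)$; either way $x\in\Gamma^{*}(A)\cup A_J$. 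This yields \eqref{componentes}.

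With connectedness and covering in hand, disjointness of the two candidate sets follows because $A$ and $B$ are not $8$-adjacent: Proposition \ref{disconexos1} shows $(\Gamma^{*}(A)\cup A_J)\cup(\Gamma^{*}(B)\cup B_J)$ is disconnected, so neither candidate can meet the other (a shared point would connect them), and in particular $\mathbb{K}^2\setminus J$ has \emph{exactly} two components, namely $\Gamma^{*}(A)\cup A_J$ and $\Gamma^{*}(B)\cup B_J$. Finally, for the "moreover" claim: let $p\in J$, so $p$ is pure and $\Gamma^{-1}(p)\in\Gamma^{-1}(J)$. By part (2) of Theorem \ref{Jordank}, $\Gamma^{-1}(p)$ has an $8$-neighbor $a\in A$ and an $8$-neighbor $b\in B$. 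By Proposition \ref{4adyacentes}, $\Gamma(a)$ is either adjacent to $p=\Gamma(\Gamma^{-1}(p))$ in $\mathbb{K}^2$, or there is a unique mixed point $c$ with $p,\Gamma(a)\in A(c)$; in the first case $\Gamma(a)\in A(p)$ and $\Gamma(a)\in\Gamma^{*}(A)\cup A_J$ so $p$ is adjacent to that component; in the second case, $c$ is adjacent to $p$, and I must check $c\in\Gamma^{*}(A)\cup A_J$ — since $\Gamma(a)$ and $p$ are non-adjacent pure points sharing the mixed neighbor $c$, we have $c\in\{w : N(w)\text{ or }cl(w)\text{ contains both}\}$; the edge of $c$ opposite to one containing $\Gamma(a)$ needs analysis, but the relevant half of $A(c)$ contains $\Gamma(a)\in\Gamma(A)$, placing $c\in\Gamma^{*}(A)$ or $c\in A_J$. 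Symmetrically for $b$. The main obstacle I anticipate is this last adjacency bookkeeping together with the same-side argument for mixed points in the covering step: both require careful case analysis on which of $N(x)$, $cl(x)$ is the relevant minimal set and which pure points of $A(x)$ are forced into $\Gamma(A)$ versus $\Gamma(B)$, leaning crucially on Propositions \ref{dospuntosmixtos}, \ref{4adyacentes}, and \ref{8union}.
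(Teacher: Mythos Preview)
Your overall strategy matches the paper's exactly: pull back via $\Gamma$, apply Rosenfeld's theorem with $(k,k')=(4,8)$, and push the two $8$-components $A,B$ forward to $\Gamma^{*}(A)\cup A_J$ and $\Gamma^{*}(B)\cup B_J$, using Propositions~\ref{conexo1} and~\ref{disconexos1} for connectedness and separation. However, your covering argument for mixed points contains a false step: you assert that for a mixed $x\notin J$ no point of $A(x)$ lies in $J$ (``unless $A(x)=J$''). This is not true --- $J$ consists of pure points and can, and typically does, meet $A(x)$; all that $|J|>4$ buys you is $A(x)\not\subseteq J$, i.e.\ $A(x)$ meets $\Gamma(A)\cup\Gamma(B)$.

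The repair is the one the paper carries out: show that $A(x)$ cannot meet \emph{both} $\Gamma(A)$ and $\Gamma(B)$ (your appeal to Proposition~\ref{8union} does exactly this, and is in fact tidier than the paper's hands-on verification). Combined with $A(x)\subseteq\Gamma(A)\cup\Gamma(B)\cup J$ (every point of $A(x)$ is pure), this yields $A(x)\subseteq\Gamma(A)\cup J$ or $A(x)\subseteq\Gamma(B)\cup J$, hence $x\in A_J$ or $x\in B_J$. Your aside ``actually more carefully, $x\in A_J$ or $x\in\Gamma^{*}(A)$'' shows you sensed the issue; once the unjustified claim about $A(x)\cap J=\emptyset$ is dropped, the argument goes through. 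For the ``moreover'' clause in the diagonal case, the paper checks $c\in A_J$ by observing that the two remaining points of $A(c)$ are pure and adjacent to $\Gamma(a)\in\Gamma(A)$, hence their $\Gamma$-preimages are $4$-adjacent to $a\in A$ and so cannot lie in $B$; thus $A(c)\subseteq\Gamma(A)\cup J$.
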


\begin{proof} By Proposition \ref{4-curva}, $\Gamma^{-1}(J)$ is a  closed  4-curve. By Theorem \ref{Jordank}, let $A$ and $B$ the 8-components of $\mathbb{Z}^2 \setminus \Gamma^{-1}(J)$. We will show that $\Gamma^{*}(A) \cup A_J$ and $\Gamma^{*}(B) \cup B_J$ are the components of $\mathbb{K}^2 \setminus J$. 

By Proposition \ref{conexo1}, $\Gamma^{*}(A) \cup A_J$ and $\Gamma^{*}(B) \cup B_J$ are connected. We now prove \eqref{componentes}. Note that $\mathbb{K}^2 \setminus J$ and $\Gamma^{*}(A) \cup A_J \cup \Gamma^{*}(B) \cup B_J$ share the same pure points. Therefore, to verify equality, we only need to analyze the mixed points.

The inclusion $\supseteq$ in \eqref{componentes} was demonstrated in Proposition \ref{conexo1}. To verify $\subseteq$, let $e \in \mathbb{K}^2 \setminus J$  be a mixed point. Consider the following cases:

\medskip

\noindent\textbf{Case 1:} Suppose that one of the following conditions holds:

\begin{enumerate} \item $N(e) \subseteq \Gamma(A) \cup \{e\}$

\item $cl(e) \subseteq \Gamma(A) \cup \{e\}$

\item $cl(e) \subseteq \Gamma(B) \cup \{e\}$

\item $N(e) \subseteq \Gamma(B) \cup \{e\}$ \end{enumerate}

All four alternatives are analyzed in the same way. Without loss of generality, suppose $N(e) \subseteq \Gamma(A) \cup \{e
\}$. Then $e \in \Gamma^{*}(A)$, and therefore $e \in \Gamma^{*}(A) \cup A_J \cup \Gamma^{*}(B) \cup B_J$.

\medskip

\noindent\textbf{Case 2:} Suppose that none of the four conditions mentioned in Case 1 holds. We will show that $e \in A_J$ or $e \in B_J$. To simplify the proof, we first establish two facts.

\medskip

\begin{itemize}
\item [(i)] $A(e) \cap \Gamma(A) \neq \emptyset$ or $A(e) \cap \Gamma(B) \neq \emptyset$. Indeed, $A(e)$ contains only pure points, so $
A(e)\subseteq \Gamma(\mathbb{Z}^2)= \Gamma(A)\cup \Gamma(B)\cup J$.
Since $|J| \geq 5$, we have $A(e) \nsubseteq J$. Thus, $A(e) \cap (\Gamma(A) \cup \Gamma(B)) \neq \emptyset$.

\medskip

\item [(ii)] If $A(e) \cap \Gamma(A) \neq \emptyset$, then $\Gamma(B) \cap A(e) = \emptyset$. Suppose $A(e) \cap \Gamma(A) \neq \emptyset$ and $\Gamma(B) \cap A(e) \neq \emptyset$; we will derive a contradiction. Let $A(e) = \{a, b, c, d\}$, where
$$
N(e)=\{a,b,e\} \;\text{and}\; cl(e)=\{c,d,e\}.
$$
Assume $a \in \Gamma(A)$. Then $c, d \notin \Gamma(B)$, because $\Gamma(A) \cup \Gamma(B)$ is disconnected (by Proposition \ref{disconexo2}) and $cl(e)$ is adjacent to $N(e)$. Therefore, $b \in \Gamma(B)$. Since $e$ is the only mixed point adjacent to $a$ and $b$, by Proposition \ref{4adyacentes}, $\Gamma^{-1}(a)$ and $\Gamma^{-1}(b)$ are $8$-adjacent. This implies that $A$ and $B$ are $8$-adjacent, which contradicts the fact that they are the $8$-components of $\mathbb{Z}^2 \setminus \Gamma^{-1}(J)$.


\end{itemize}

From (i) and (ii), we have $A(e) \subseteq \Gamma(A) \cup J$ or $A(e) \subseteq \Gamma(B) \cup J$. By the definition of $A_J$ and $B_J$, it follows that $e \in A_J$ or $e \in B_J$. We have shown that \eqref{componentes} is valid.

To conclude that $\mathbb{K}^2 \setminus J$ has two components, it only remains to show that $\Gamma^{*}(A) \cup A_J$ and $\Gamma^{*}(B) \cup B_J$ are the components of $\mathbb{K}^2 \setminus J$. This follows from Proposition \ref{disconexos1} and the fact that $\mathbb{K}^2 \setminus J = \Gamma^{*}(A) \cup A_J \cup \Gamma^{*}(B) \cup B_J$ and that $\Gamma^{*}(A) \cup A_J$ and $\Gamma^{*}(B) \cup B_J$ are connected.

Finally, we will verify the second statement. Let $x \in J$. By Theorem \ref{Jordank}, there exist two points $a, b$ in $A$ and $B$, respectively, that are 8-adjacent to $\Gamma^{-1}(x)$. Without loss of generality, we will assume that $a \in A$. We will consider the following cases.

\textbf{Case 1}: $a$ is 4-adjacent to $\Gamma^{-1}(x)$. By Proposition \ref{4adyacentes}, $\Gamma(a) \in A(x)$, and we have already seen that $\Gamma(a) \in \Gamma^{*}(A) \cup A_J$, which is a component of $\mathbb{K}^2 \setminus J$.

\textbf{Case 2}: $a$ is 8-adjacent to $\Gamma^{-1}(x)$ but not 4-adjacent. By Proposition \ref{4adyacentes}, there exists a mixed point $d'$ such that $\Gamma(a), x \in A(d')$. We will prove that $d' \in A_J$. To do so, we need to show that $A(d') \subseteq J \cup \Gamma(A)$. Let $d, c$ be such that $A(d') = \{x, d, \Gamma(a), c\}$. By part (1) of Proposition \ref{4adyacentes}, $x$ and $\Gamma(a)$ are not adjacent; therefore, $d$ and $c$ are adjacent to both $x$ and $\Gamma(a)$. Since $c, d$ are pure points, $c = \Gamma(s)$ and $d = \Gamma(t)$ for some $s, t \in \mathbb{Z}^2$. As $a \in A$, it follows that $s, t \in A \cup \Gamma^{-1}(J)$, since $A$ and $B$ are not 8-adjacent. Thus, $d, c \in \Gamma(A) \cup J$. Therefore, $d' \in A_J$.

\end{proof}

\begin{figure}[h]
    \centering
 
\tikzset{every picture/.style={line width=0.75pt}} 

\begin{tikzpicture}[x=0.75pt,y=0.75pt,yscale=-1,xscale=1]

\draw  [fill={rgb, 255:red, 227; green, 240; blue, 247 }  ,fill opacity=0.17 ][dash pattern={on 0.84pt off 2.51pt}] (454.21,44.89) -- (539.26,125.89) -- (468.26,198.89) -- (381.26,126.73) -- cycle ;
\draw  [color={rgb, 255:red, 0; green, 0; blue, 0 }  ,draw opacity=1 ][fill={rgb, 255:red, 74; green, 144; blue, 226 }  ,fill opacity=1 ] (450.84,116.49) -- (463.37,116.49) -- (463.37,125.38) -- (450.84,125.38) -- cycle ;
\draw  [fill={rgb, 255:red, 74; green, 144; blue, 226 }  ,fill opacity=1 ] (512.97,117.56) -- (525.5,117.56) -- (525.5,126.46) -- (512.97,126.46) -- cycle ;
\draw  [fill={rgb, 255:red, 255; green, 255; blue, 255 }  ,fill opacity=1 ] (483.3,151.04) .. controls (483.3,147.98) and (485.94,145.49) .. (489.19,145.49) .. controls (492.45,145.49) and (495.09,147.98) .. (495.09,151.04) .. controls (495.09,154.11) and (492.45,156.6) .. (489.19,156.6) .. controls (485.94,156.6) and (483.3,154.11) .. (483.3,151.04) -- cycle ;
\draw  [fill={rgb, 255:red, 180; green, 191; blue, 203 }  ,fill opacity=1 ] (454.72,176.37) -- (467.25,176.37) -- (467.25,185.26) -- (454.72,185.26) -- cycle ;
\draw  [fill={rgb, 255:red, 0; green, 0; blue, 0 }  ,fill opacity=1 ] (460.06,141.23) -- (462.34,146.94) -- (467.42,147.85) -- (463.74,152.3) -- (464.61,158.58) -- (460.06,155.61) -- (455.51,158.58) -- (456.38,152.3) -- (452.7,147.85) -- (457.79,146.94) -- cycle ;
\draw  [fill={rgb, 255:red, 0; green, 0; blue, 0 }  ,fill opacity=1 ] (489.72,108.41) -- (491.99,114.13) -- (497.08,115.04) -- (493.4,119.49) -- (494.26,125.76) -- (489.72,122.8) -- (485.17,125.76) -- (486.04,119.49) -- (482.36,115.04) -- (487.44,114.13) -- cycle ;
\draw  [color={rgb, 255:red, 0; green, 0; blue, 0 }  ,draw opacity=1 ][fill={rgb, 255:red, 192; green, 206; blue, 222 }  ,fill opacity=1 ] (393.65,119.27) -- (406.18,119.27) -- (406.18,128.16) -- (393.65,128.16) -- cycle ;
\draw  [fill={rgb, 255:red, 199; green, 209; blue, 222 }  ,fill opacity=1 ] (422.96,152.33) .. controls (422.96,149.26) and (425.6,146.77) .. (428.85,146.77) .. controls (432.11,146.77) and (434.75,149.26) .. (434.75,152.33) .. controls (434.75,155.39) and (432.11,157.88) .. (428.85,157.88) .. controls (425.6,157.88) and (422.96,155.39) .. (422.96,152.33) -- cycle ;
\draw  [fill={rgb, 255:red, 0; green, 0; blue, 0 }  ,fill opacity=1 ] (425.53,113.23) -- (427.8,118.94) -- (432.89,119.86) -- (429.21,124.3) -- (430.08,130.58) -- (425.53,127.62) -- (420.98,130.58) -- (421.85,124.3) -- (418.17,119.86) -- (423.25,118.94) -- cycle ;
\draw  [fill={rgb, 255:red, 195; green, 207; blue, 220 }  ,fill opacity=1 ] (450,59.14) -- (462.53,59.14) -- (462.53,68.03) -- (450,68.03) -- cycle ;
\draw  [color={rgb, 255:red, 0; green, 0; blue, 0 }  ,draw opacity=1 ][fill={rgb, 255:red, 255; green, 255; blue, 255 }  ,fill opacity=1 ][line width=0.75]  (483.35,93.99) .. controls (483.35,90.92) and (485.99,88.43) .. (489.25,88.43) .. controls (492.5,88.43) and (495.14,90.92) .. (495.14,93.99) .. controls (495.14,97.05) and (492.5,99.54) .. (489.25,99.54) .. controls (485.99,99.54) and (483.35,97.05) .. (483.35,93.99) -- cycle ;
\draw  [fill={rgb, 255:red, 0; green, 0; blue, 0 }  ,fill opacity=1 ] (458.68,87.18) -- (460.95,92.89) -- (466.04,93.81) -- (462.36,98.25) -- (463.23,104.53) -- (458.68,101.57) -- (454.13,104.53) -- (455,98.25) -- (451.32,93.81) -- (456.4,92.89) -- cycle ;
\draw  [fill={rgb, 255:red, 188; green, 201; blue, 217 }  ,fill opacity=1 ] (418.23,94.06) .. controls (418.23,90.99) and (420.87,88.5) .. (424.13,88.5) .. controls (427.38,88.5) and (430.02,90.99) .. (430.02,94.06) .. controls (430.02,97.13) and (427.38,99.62) .. (424.13,99.62) .. controls (420.87,99.62) and (418.23,97.13) .. (418.23,94.06) -- cycle ;
\draw  [draw opacity=0][fill={rgb, 255:red, 255; green, 255; blue, 255 }  ,fill opacity=1 ] (75.06,52.41) -- (201.26,52.41) -- (201.26,172.64) -- (75.06,172.64) -- cycle ; \draw   (75.06,52.41) -- (75.06,172.64)(134.27,52.41) -- (134.27,172.64)(193.48,52.41) -- (193.48,172.64) ; \draw   (75.06,52.41) -- (201.26,52.41)(75.06,111.62) -- (201.26,111.62)(75.06,170.84) -- (201.26,170.84) ; \draw    ;
\draw  [color={rgb, 255:red, 0; green, 0; blue, 0 }  ,draw opacity=1 ][fill={rgb, 255:red, 207; green, 215; blue, 227 }  ,fill opacity=1 ][line width=0.75]  (182.35,170.84) .. controls (182.35,165.02) and (187.34,160.31) .. (193.48,160.31) .. controls (199.63,160.31) and (204.61,165.02) .. (204.61,170.84) .. controls (204.61,176.65) and (199.63,181.36) .. (193.48,181.36) .. controls (187.34,181.36) and (182.35,176.65) .. (182.35,170.84) -- cycle ;
\draw  [color={rgb, 255:red, 0; green, 0; blue, 0 }  ,draw opacity=1 ][fill={rgb, 255:red, 255; green, 255; blue, 255 }  ,fill opacity=1 ] (182.35,111.62) .. controls (182.35,105.81) and (187.34,101.1) .. (193.48,101.1) .. controls (199.63,101.1) and (204.61,105.81) .. (204.61,111.62) .. controls (204.61,117.44) and (199.63,122.15) .. (193.48,122.15) .. controls (187.34,122.15) and (182.35,117.44) .. (182.35,111.62) -- cycle ;
\draw  [fill={rgb, 255:red, 214; green, 222; blue, 232 }  ,fill opacity=1 ] (64.79,52.41) .. controls (64.79,46.6) and (69.39,41.89) .. (75.06,41.89) .. controls (80.73,41.89) and (85.33,46.6) .. (85.33,52.41) .. controls (85.33,58.22) and (80.73,62.93) .. (75.06,62.93) .. controls (69.39,62.93) and (64.79,58.22) .. (64.79,52.41) -- cycle ;
\draw  [fill={rgb, 255:red, 212; green, 220; blue, 230 }  ,fill opacity=1 ] (64.79,111.62) .. controls (64.79,105.81) and (69.39,101.1) .. (75.06,101.1) .. controls (80.73,101.1) and (85.33,105.81) .. (85.33,111.62) .. controls (85.33,117.43) and (80.73,122.15) .. (75.06,122.15) .. controls (69.39,122.15) and (64.79,117.43) .. (64.79,111.62) -- cycle ;
\draw  [fill={rgb, 255:red, 74; green, 144; blue, 226 }  ,fill opacity=1 ] (124,111.62) .. controls (124,105.81) and (128.6,101.1) .. (134.27,101.1) .. controls (139.94,101.1) and (144.54,105.81) .. (144.54,111.62) .. controls (144.54,117.43) and (139.94,122.15) .. (134.27,122.15) .. controls (128.6,122.15) and (124,117.43) .. (124,111.62) -- cycle ;
\draw  [fill={rgb, 255:red, 74; green, 144; blue, 226 }  ,fill opacity=1 ] (179.33,52.58) .. controls (179.33,46.77) and (183.93,42.06) .. (189.6,42.06) .. controls (195.27,42.06) and (199.87,46.77) .. (199.87,52.58) .. controls (199.87,58.39) and (195.27,63.1) .. (189.6,63.1) .. controls (183.93,63.1) and (179.33,58.39) .. (179.33,52.58) -- cycle ;
\draw  [fill={rgb, 255:red, 255; green, 255; blue, 255 }  ,fill opacity=1 ] (122.47,52.58) .. controls (122.47,46.77) and (127.07,42.06) .. (132.74,42.06) .. controls (138.41,42.06) and (143.01,46.77) .. (143.01,52.58) .. controls (143.01,58.39) and (138.41,63.1) .. (132.74,63.1) .. controls (127.07,63.1) and (122.47,58.39) .. (122.47,52.58) -- cycle ;
\draw  [color={rgb, 255:red, 0; green, 0; blue, 0 }  ,draw opacity=1 ][fill={rgb, 255:red, 235; green, 237; blue, 239 }  ,fill opacity=1 ] (123.14,170.84) .. controls (123.14,165.02) and (128.12,160.31) .. (134.27,160.31) .. controls (140.42,160.31) and (145.4,165.02) .. (145.4,170.84) .. controls (145.4,176.65) and (140.42,181.36) .. (134.27,181.36) .. controls (128.12,181.36) and (123.14,176.65) .. (123.14,170.84) -- cycle ;
\draw  [color={rgb, 255:red, 0; green, 0; blue, 0 }  ,draw opacity=1 ][fill={rgb, 255:red, 211; green, 217; blue, 223 }  ,fill opacity=1 ] (62.74,165.9) .. controls (62.74,160.08) and (67.73,155.37) .. (73.87,155.37) .. controls (80.02,155.37) and (85,160.08) .. (85,165.9) .. controls (85,171.71) and (80.02,176.42) .. (73.87,176.42) .. controls (67.73,176.42) and (62.74,171.71) .. (62.74,165.9) -- cycle ;
\draw    (263.26,102.89) .. controls (309.55,80.24) and (309.28,86.69) .. (346.53,100.27) ;
\draw [shift={(348.26,100.89)}, rotate = 199.75] [color={rgb, 255:red, 0; green, 0; blue, 0 }  ][line width=0.75]    (10.93,-3.29) .. controls (6.95,-1.4) and (3.31,-0.3) .. (0,0) .. controls (3.31,0.3) and (6.95,1.4) .. (10.93,3.29)   ;

\draw (482,132.4) node [anchor=north west][inner sep=0.75pt]  [font=\tiny]  {$e$};
\draw (457.84,129.78) node [anchor=north west][inner sep=0.75pt]  [font=\scriptsize]  {$d$};
\draw (514.97,129.86) node [anchor=north west][inner sep=0.75pt]  [font=\scriptsize]  {$c$};
\draw (497.14,97.39) node [anchor=north west][inner sep=0.75pt]  [font=\scriptsize]  {$a$};
\draw (497.09,154.44) node [anchor=north west][inner sep=0.75pt]  [font=\scriptsize]  {$b$};
\draw (146.54,115.02) node [anchor=north west][inner sep=0.75pt]  [font=\footnotesize]  {$p$};
\draw (85.54,63.02) node [anchor=north west][inner sep=0.75pt]  [font=\footnotesize]  {$p_{1}$};
\draw (140.54,65.02) node [anchor=north west][inner sep=0.75pt]  [font=\footnotesize]  {$p_{2}$};
\draw (198.54,67.02) node [anchor=north west][inner sep=0.75pt]  [font=\footnotesize]  {$p_{3}$};
\draw (200.54,119.02) node [anchor=north west][inner sep=0.75pt]  [font=\footnotesize]  {$p_{4}$};
\draw (205.54,167.02) node [anchor=north west][inner sep=0.75pt]  [font=\footnotesize]  {$p_{5}$};
\draw (152.54,168.02) node [anchor=north west][inner sep=0.75pt]  [font=\footnotesize]  {$p_{6}$};
\draw (87,169.3) node [anchor=north west][inner sep=0.75pt]  [font=\footnotesize]  {$p_{7}$};
\draw (87.33,115.02) node [anchor=north west][inner sep=0.75pt]  [font=\footnotesize]  {$p_{8}$};
\draw (300,57.4) node [anchor=north west][inner sep=0.75pt]    {$\Gamma $};

\end{tikzpicture}
    \caption{Case 1.2 in the proof of  Theorem \ref{mixtoconcodiccion}}
    \label{8-vencidadcasoc,d}
\end{figure}
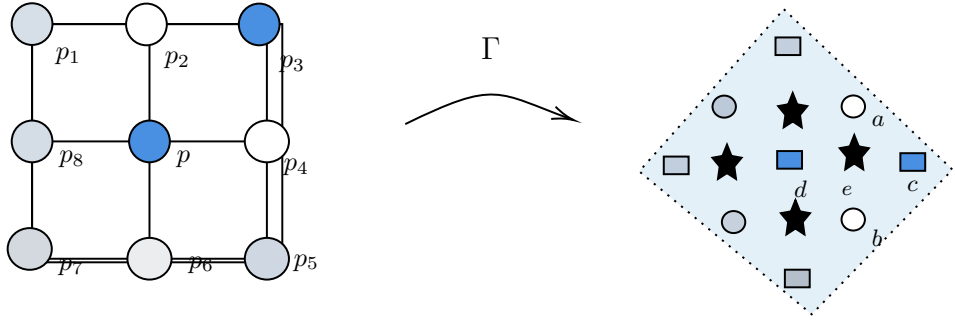

Now we will examine Jordan curves that contain mixed points. Recall that in Proposition \ref{1contenencia}, we established that
$J \subseteq \Gamma^{*}(\Gamma^{-1}(J))$ for any Jordan curve $J$. Our subsequent analysis focuses on the set $\Gamma^{*}(\Gamma^{-1}(J)) \setminus J$. We begin by considering the case where $\Gamma^{*}(\Gamma^{-1}(J)) \setminus J$ is empty.

\begin{teo}
\label{mixtoconcodiccion}
Let $J$ be a Jordan curve with more than four points. If $\Gamma^{*}(\Gamma^{-1}(J)) = J$, then $\mathbb{K}^2 \setminus J$ has two components. Moreover, each  point of $J$ is adjacent to both components of $\mathbb{K}^2 \setminus J$. 
\end{teo}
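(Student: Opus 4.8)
The strategy mirrors that of Theorem \ref{solopuros}, but now using Theorem \ref{8caminos-gamma*}(ii) in place of Proposition \ref{4-curva}. Since $\Gamma^{*}(\Gamma^{-1}(J)) = J$, Theorem \ref{8caminos-gamma*}(ii) tells us that $\Gamma^{-1}(J)$ is a closed $8$-curve in $\mathbb{Z}^2$. Because $|J| > 4$ (and $J$ has mixed points only if $|J|\geq 6$, but in any case $|\Gamma^{-1}(J)|\geq 4$; in fact one checks $|\Gamma^{-1}(J)|>4$ since a closed $8$-curve coming from a Jordan curve with more than four points cannot have only four points), we may apply Rosenfeld's Jordan curve theorem (Theorem \ref{Jordank}) to obtain the two $4$-components $A$ and $B$ of $\mathbb{Z}^2 \setminus \Gamma^{-1}(J)$, with the additional property that each point of $\Gamma^{-1}(J)$ has a $4$-neighbor in each of $A$ and $B$. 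I would then claim that $\Gamma^{*}(A)\cup A_J$ and $\Gamma^{*}(B)\cup B_J$ are exactly the two components of $\mathbb{K}^2\setminus J$.

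First I would establish connectivity: by Proposition \ref{conexo2}, since $A$ and $B$ are $4$-connected, both $\Gamma^{*}(A)\cup A_J$ and $\Gamma^{*}(B)\cup B_J$ are connected. Next I would prove the decomposition
\begin{equation*}
\mathbb{K}^{2} \setminus J = (\Gamma^{*}(A) \cup A_J) \cup (\Gamma^{*}(B) \cup B_J).
\end{equation*}
The inclusion $\supseteq$ needs the observation that $\Gamma^{*}(A)\cup A_J\subseteq \mathbb{K}^2\setminus J$: a pure point of $\Gamma^{*}(A)$ lies in $\Gamma(A)$, hence is disjoint from $J$; a mixed point $e\in\Gamma^{*}(A)$ satisfies $N(e)\subseteq\Gamma(A)\cup\{e\}$ or $cl(e)\subseteq\Gamma(A)\cup\{e\}$, and if $e\in J$ then $A(e)\cap\Gamma^{-1}$-side forces a mixed point of $J$ with both pure neighbors in $\Gamma(A)$, contradicting $\Gamma^{*}(\Gamma^{-1}(J))=J$ together with $A\cap\Gamma^{-1}(J)=\emptyset$; a mixed point $e\in A_J$ with $e\in J$ would have $A(e)\subseteq\Gamma(A)\cup J$, but $A(e)$ is a $4$-point Jordan curve while $J$ has more than four points and $A(e)\cap\Gamma(A)\neq\emptyset$ by Proposition \ref{AJ}, again a contradiction. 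For $\subseteq$, the pure points of $\mathbb{K}^2\setminus J$ are exactly $\Gamma(A)\cup\Gamma(B)$, so only mixed points $e$ need attention. If one of $N(e)$ or $cl(e)$ is contained in $\Gamma(A)\cup\{e\}$ or in $\Gamma(B)\cup\{e\}$, then $e\in\Gamma^{*}(A)\cup\Gamma^{*}(B)$ and we are done. Otherwise, I would argue as in Theorem \ref{solopuros}, Case 2: (i) $A(e)$ contains only pure points, so $A(e)\subseteq\Gamma(A)\cup\Gamma(B)\cup J$, and since $|J|\geq 5$ we get $A(e)\cap(\Gamma(A)\cup\Gamma(B))\neq\emptyset$; (ii) if $A(e)$ met both $\Gamma(A)$ and $\Gamma(B)$, writing $N(e)=\{a,b,e\}$, $cl(e)=\{c,d,e\}$ and using that $\Gamma(A)\cup\Gamma(B)$ is disconnected (Proposition \ref{disconexo2}) one forces two points of $A(e)$, one from $\Gamma(A)$ and one from $\Gamma(B)$, that are both $N(e)$-members (or both $cl(e)$-members); Proposition \ref{4adyacentes} then makes their $\Gamma$-preimages $8$-adjacent, contradicting that $A,B$ are the $8$-components (they are $4$-components, but a $4$-component is in particular not $8$-adjacent to the other $4$-component only if... here one must be careful). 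This is the delicate point — see below. Granting (i) and (ii), $A(e)\subseteq\Gamma(A)\cup J$ or $A(e)\subseteq\Gamma(B)\cup J$, hence $e\in A_J$ or $e\in B_J$, establishing the decomposition.

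Once the decomposition holds, disconnectedness of the union follows from Proposition \ref{disconexos3} (which applies precisely because $\Gamma^{*}(\Gamma^{-1}(J))=J$ and because we have just shown $\Gamma^{*}(A)\cup A_J\cup\Gamma^{*}(B)\cup B_J\subseteq\mathbb{K}^2\setminus J$), so $\mathbb{K}^2\setminus J$ has exactly two components. Finally, for the "each point of $J$ is adjacent to both components" statement, I would take $x\in J$; by Theorem \ref{Jordank}(2) there are $4$-neighbors $a\in A$ and $b\in B$ of $\Gamma^{-1}(x)$; then $\Gamma(a)\in A(x)\cap\Gamma(A)\subseteq A(x)\cap(\Gamma^{*}(A)\cup A_J)$ by Proposition \ref{4adyacentes}, and similarly $\Gamma(b)$ witnesses adjacency to the other component. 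Here the argument is actually cleaner than in Theorem \ref{solopuros} because Theorem \ref{Jordank}(2) gives $4$-neighbors, so we land directly on pure points of the two components without needing the Case 2 mixed-point construction.

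\textbf{Main obstacle.} The subtle step is item (ii) in the decomposition argument: I invoked that $A$ and $B$ are the $8$-components to derive a contradiction from $8$-adjacency of a point of $A$ with a point of $B$ — but here $A,B$ are $4$-components of $\mathbb{Z}^2\setminus\Gamma^{-1}(J)$ (Theorem \ref{Jordank} with $k=8$, $k'=4$), and two distinct $4$-components can in principle be $8$-adjacent. The fix is that the relevant adjacency in (ii) is genuinely a $4$-adjacency: in the forced configuration the point $e$ is the \emph{unique} mixed point adjacent to the two pure points $a$ (in $\Gamma(A)$) and $b$ (in $\Gamma(B)$) sitting in the same half $N(e)$ or $cl(e)$ of $A(e)$ — but if $a,b$ are both in, say, $N(e)=\{a,b,e\}$, then $a$ and $b$ are adjacent pure points sharing the mixed neighbor $e$; by Proposition \ref{dospuntosmixtos} (or a direct inspection of Figure \ref{adyacencias_plano}) adjacent pure points sharing a mixed neighbor are $4$-adjacent-related, i.e.\ $\Gamma(a)\in A(\Gamma(b))$, so by Proposition \ref{4adyacentes}(1) $\Gamma^{-1}(a)$ and $\Gamma^{-1}(b)$ are $4$-adjacent, contradicting that $A$ and $B$ are distinct $4$-components. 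One has to verify that the disconnectedness of $\Gamma(A)\cup\Gamma(B)$ (Proposition \ref{disconexo2}) indeed forces $a$ and $b$ into the \emph{same} half of $A(e)$ rather than one in $N(e)$ and the other in $cl(e)$ — precisely because $N(e)$ and $cl(e)$ share points with each other only through the adjacency structure, and a path through $e$ between a $\Gamma(A)$-point and a $\Gamma(B)$-point would connect them. Making this little case analysis airtight, and double-checking that $|\Gamma^{-1}(J)|>4$ so that Theorem \ref{Jordank} applies, are the only real work; everything else is assembling the propositions of Sections \ref{sec-preser-curvas}--\ref{preserva-componentes}.
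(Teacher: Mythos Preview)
Your overall architecture is right and matches the paper's: $\Gamma^{-1}(J)$ is a closed $8$-curve, Rosenfeld gives two $4$-components $A,B$, and the two pieces $\Gamma^{*}(A)\cup A_J$ and $\Gamma^{*}(B)\cup B_J$ should be the components of $\mathbb{K}^2\setminus J$. But the argument breaks down at exactly the point you flag, and your proposed fix is wrong. If $a,b$ are the two pure points of $N(e)$ for a mixed $e$, then $a$ and $b$ are \emph{not} adjacent: for instance if $e=(x,y)$ with $x$ odd and $y$ even, then $N(e)\setminus\{e\}=\{(x,y-1),(x,y+1)\}$, and these differ by $2$ in one coordinate. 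They share only the single mixed neighbour $e$, so Proposition \ref{dospuntosmixtos} confirms non-adjacency. Hence you cannot derive $4$-adjacency of $\Gamma^{-1}(a)$ and $\Gamma^{-1}(b)$ that way. The correct resolution (and this is the paper's key move) is to look at the \emph{other} half: with $a\in\Gamma(A)\cap N(e)$ and $b\in\Gamma(B)\cap N(e)$, each of $c,d\in cl(e)$ is adjacent to both $a$ and $b$, so $\Gamma^{-1}(c)$ is $4$-adjacent to a point of $A$ and a point of $B$; since $A,B$ are $4$-components this forces $c\in J$, and likewise $d\in J$. Then $cl(e)\subseteq\Gamma(\Gamma^{-1}(J))\cup\{e\}$, so $e\in\Gamma^{*}(\Gamma^{-1}(J))=J$, contradicting $e\in\mathbb{K}^2\setminus J$. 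Note that this step uses the hypothesis $\Gamma^{*}(\Gamma^{-1}(J))=J$ in an essential way, which your attempted fix never did.

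Two further gaps. First, your treatment of the inclusion $\supseteq$ is not correct as written: for a mixed $e\in\Gamma^{*}(A)$ with, say, $N(e)\subseteq\Gamma(A)\cup\{e\}$, the dangerous subcase is $c,d\in J$ (the other two points of $A(e)$). Here you cannot conclude $e\notin J$ by the reasoning you give; the paper instead invokes Proposition \ref{diferentescomponentes} applied to the closed $8$-curve $\Gamma^{-1}(J)$: with $p=\Gamma^{-1}(d)$ and $p_3=\Gamma^{-1}(c)$ one gets that $\Gamma^{-1}(a)=p_2$ and $\Gamma^{-1}(b)=p_4$ lie in different $4$-components of $\mathbb{Z}^2\setminus\Gamma^{-1}(J)$, contradicting $a,b\in\Gamma(A)$. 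Second, your proof of the final adjacency claim handles only pure $x\in J$ (where indeed Theorem \ref{Jordank}(2) gives $4$-neighbours directly); but $J$ may contain mixed points $x$, for which $\Gamma^{-1}(x)$ is not a point of $\mathbb{Z}^2$. For such $x$ one again uses Proposition \ref{diferentescomponentes}: writing $A(x)\cap J=\{c,d\}$ with $\Gamma^{-1}(c),\Gamma^{-1}(d)$ diagonally $8$-adjacent, the two remaining points of $A(x)$ land in different $4$-components of $\mathbb{Z}^2\setminus\Gamma^{-1}(J)$, hence in the two components of $\mathbb{K}^2\setminus J$.
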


\begin{proof}
By Proposition \ref{8-curva}, we know that $\Gamma^{-1}(J)$ is a  closed  8-curve. By Theorem \ref{Jordank}, $\mathbb{Z}^2 \setminus \Gamma^{-1}(J)$ has two 4-components, denoted as $A$ and $B$. By Proposition \ref{4-caminoplano}, $\Gamma(A)$ and $\Gamma(B)$ are connected, and by Proposition \ref{cont_conex}, we have that $\Gamma^{*}(A)$ and $\Gamma^{*}(B)$ are also connected. We will show that $\Gamma^{*}(A) \cup A_J$ and $\Gamma^{*}(B) \cup B_J$ are the components of $\mathbb{K}^2 \setminus J$.

By Proposition \ref{conexo2}, we know that $\Gamma^{*}(A) \cup A_J$ and $\Gamma^{*}(B) \cup B_J$ are connected. We will now show that \eqref{componentes} holds. As in the proof of Theorem \ref{solopuros}, it suffices to prove equality for the mixed points.

\medskip

\noindent Let us prove the inclusion $\subseteq$ in \eqref{componentes}. Let $e \in \mathbb{K}^2 \setminus J$ be a mixed point. 

\medskip

Let $a, b, c,$ and $d$ be such that
\[
N(e) = \{e, a, b\} \quad \text{and} \quad cl(e) = \{e, c, d\}.
\]
Recall that since $e$ is mixed, $a, b, c,$ and $d$ are pure points. We consider two cases:

\medskip

\noindent\textbf{Case 1:} Assume $N(e) \subseteq \mathbb{K}^2 \setminus J$ or $cl(e) \subseteq \mathbb{K}^2 \setminus J$. Without loss of generality, suppose $N(e) \subseteq \mathbb{K}^2 \setminus J$. Since $\mathbb{Z}^2 \setminus \Gamma^{-1}(J) = A \cup B$ and $a, b$ are pure points, it follows that $\{a, b\} \subseteq \Gamma(A) \cup \Gamma(B)$. We consider the following subcases:

\medskip 

\textbf{1.1:} Suppose $\{a, b\} \subseteq \Gamma(A)$ or $\{a, b\} \subseteq \Gamma(B)$. In the first case, $N(e) \subseteq \Gamma(A) \cup \{e\}$, so $e \in \Gamma^{*}(A)$. Similarly, in the second case, $e \in \Gamma^{*}(B)$.

\medskip

\textbf{1.2:} Suppose $a \in \Gamma(A)$ and $b \in \Gamma(B)$. We will show that this subcase is impossible. First, observe that $c \in J$. Notice that $c \in A(a) \cap A(b)$. Then, by Proposition \ref{4adyacentes}, $\Gamma^{-1}(c)$ is 4-adjacent to both $\Gamma^{-1}(a)$ and $\Gamma^{-1}(b)$. Since $A$ and $B$ are 4-components of $\mathbb{Z}^2 \setminus \Gamma^{-1}(J)$, we must have $\Gamma^{-1}(c) \in \Gamma^{-1}(J)$, i.e., $c \in J$. Similarly, we show that $d \in J$.

Since $c, d \in J$, we have $cl(e) \subseteq \Gamma(\Gamma^{-1}(J)) \cup \{e\}$. Since $J = \Gamma^{*}(\Gamma^{-1}(J))$, it follows that $e \in J$, contradicting the assumption about $e$. Thus, this subcase is impossible.

\medskip

\noindent\textbf{Case 2:} Suppose $N(e) \nsubseteq \mathbb{K}^2 \setminus J$ and $cl(e) \nsubseteq \mathbb{K}^2 \setminus J$. Then $N(e) \cap J \neq \emptyset$ and $cl(e) \cap J \neq \emptyset$. Observe that if $|N(e) \cap J| \geq 2$, then $e \in \Gamma^{*}(\Gamma^{-1}(J))$, which is not possible since $J = \Gamma^{*}(\Gamma^{-1}(J))$ and $e \notin J$. Therefore, $|N(e) \cap J| = 1$. Similarly, we show that $|cl(e) \cap J| = 1$.
Then, $N(e) \cap J = \{b\}$ and $cl(e) \cap J = \{d\}$. Let us prove that $e \in A_J$ or $e \in B_J$. Note that $a$ and $c$ are adjacent, i.e., $\{c, a\}$ is connected, and $\{a, c\} \subseteq \mathbb{K}^{2} \setminus J$. Since $a$ and $c$ are pure points, it follows that $\{c, a\} \subseteq \Gamma(A) \cup \Gamma(B)$. Given that $\Gamma(A \cup B) = \Gamma(A) \cup \Gamma(B)$ is disconnected (see Proposition \ref{4-caminoplano}), we must have $\{c, a\} \subseteq \Gamma(A)$ or $\{c, a\} \subseteq \Gamma(B)$. Thus, $A(e) = \{a, b, c, d\} \subseteq \Gamma(A) \cup J$ or $A(e) \subseteq \Gamma(B) \cup J$. This shows that $e \in A_J$ or $e \in B_J$.

\medskip

We have demonstrated that $\mathbb{K}^2 \setminus J \subseteq \Gamma^{*}(A) \cup A_J \cup \Gamma^*(B) \cup B_J$.

\bigskip

\noindent Let us now prove that $\supseteq$ holds in \eqref{componentes}. Let $e \in \Gamma^{*}(A) \cup A_J \cup \Gamma^*(B) \cup B_J$ be a mixed point. Let $a, b, c,$ and $d$ be such that 
\[
N(e) = \{e, a, b\} \quad \text{and} \quad cl(e) = \{e, c, d\}.
\]
We will consider two cases.

\medskip 

\noindent\textbf{Case 1:} Suppose $e \in \Gamma^{*}(A) \cup \Gamma^{*}(B)$. Without loss of generality, assume $e \in \Gamma^{*}(A)$. By the definition of $\Gamma^{*}$, we have $N(e) \subseteq \Gamma(A) \cup \{e\}$ or $cl(e) \subseteq \Gamma(A) \cup \{e\}$. Again, without loss of generality, assume $N(e) \subseteq \Gamma(A) \cup \{e\}$. In summary, we can assume that $a, b \in \Gamma(A)$. Since $c, d$ are pure points, consider the following subcases:

\medskip 

\textbf{1.1:} Suppose $c, d \in \Gamma(A)$. Note that $a, b, c, d \notin \Gamma(\Gamma^{-1}(J))$. Hence, $N(e) \nsubseteq \Gamma(\Gamma^{-1}(J)) \cup \{e\}$ and $cl(e) \nsubseteq \Gamma(\Gamma^{-1}(J)) \cup \{e\}$. It follows that $e \notin \Gamma^{*}(\Gamma^{-1}(J))$. Since $\Gamma^{*}(\Gamma^{-1}(J)) = J$, we conclude that $e \notin J$, i.e., $e \in \mathbb{K}^2 \setminus J$. 

\medskip

\textbf{1.2:} Suppose $c, d \in J$. Therefore, $\Gamma^{-1}(d), \Gamma^{-1}(c) \in \Gamma^{-1}(J)$. Denote by $p$ the point $\Gamma^{-1}(d)$. We use the labels of the points in the 8-neighborhood of $p$ as shown in Figure \ref{8-vencidadcasoc,d}. Since $\Gamma^{-1}(c)$ is 8-adjacent to $p$ but not 4-adjacent to $p$, we have $\Gamma^{-1}(c) = p_i$ for some $i \in \{1, 3, 5, 7\}$. Without loss of generality, assume $p_3 = \Gamma^{-1}(c)$. Consequently, $p_2 = \Gamma^{-1}(a)$ and $p_4 = \Gamma^{-1}(b)$ (see Figure \ref{8-vencidadcasoc,d}). By Lemma \ref{8-curva}, $\Gamma^{-1}(J)$ is an closed 8-curve, and by Lemma \ref{diferentescomponentes} (where $p$ is $\Gamma^{-1}(d)$ and $i = 3$), $p_2$ and $p_4$ belong to different 4-components of $\mathbb{Z}^2 \setminus \Gamma^{-1}(J)$. This contradicts the assumption that $a, b \in \Gamma(A)$. Therefore, this subcase is not possible.

\medskip

\textbf{1.3:} Suppose $c$ or $d$ is in $\Gamma(B)$. We will show that this subcase is not possible. Without loss of generality, assume $d \in \Gamma(B)$. Since $d \in A(a) \cap A(b)$ and $\Gamma(A) \cup \Gamma(B) = \Gamma(A \cup B)$, it follows that $\Gamma(A \cup B)$ is connected. Consequently, by Proposition \ref{4-caminoplano}, $A \cup B$ would be 4-connected, which is a contradiction. Thus, this subcase is not possible.

\medskip

\noindent\textbf{Case 2:} Suppose $e\in A_J\cup B_J$ and $e\notin \Gamma^{*}(A)\cup \Gamma^{*}(B)$. We will assume $e\in A_J$.
Note that $|A(e)\cap J|\leq 2$, because otherwise, it is easy to verify that $e\in \Gamma^*(\Gamma^{-1}(J))$, and consequently $e\in J$, which would imply that $J$ is not a Jordan curve. Let us consider the following subcases:

\medskip

\textbf{2.1:} Suppose $|A(e)\cap J|=1$. Without loss of generality, assume $A(e)\cap J=\{d\}$. It follows that $A(e)\setminus \{d\}\subseteq \Gamma(A)$, and then $N(e)\nsubseteq \Gamma(\Gamma^{-1}(J))\cup \{e\}$ and $cl(e)\nsubseteq \Gamma(\Gamma^{-1}(J))\cup \{e\}$, which implies that $e\notin J$, since $J=\Gamma^{*}(\Gamma^{-1}(J))$. 

\medskip 

\textbf{2.2:} Suppose $|A(e)\cap J|=2$. We will consider the following two alternatives:
        
\begin{itemize}  
\item []\textbf{2.2.1:} Suppose $A(e)\cap J=\{c,d\}$ or $A(e)\cap J=\{a,b\}$. Using reasoning similar to Case 1.3, we conclude that this alternative cannot occur.

\medskip 

\item []\textbf{2.2.2:} Suppose $A(e)\cap J=\{a,d\}$ or $A(e)\cap J=\{c,b\}$. Without loss of generality, assume $A(e)\cap J=\{a,d\}$. It follows that $N(e)\nsubseteq \Gamma(\Gamma^{-1}(J))\cup \{e\}$ and $cl(e)\nsubseteq \Gamma(\Gamma^{-1}(J))\cup \{e\}$, which implies that $e\notin J$.
\end{itemize}

\medskip 

With this, we conclude the proof that $\mathbb{K}^2\setminus J=\Gamma^{*}(A)\cup A_J\cup \Gamma^{*}(B)\cup B_J$.

Since the sets $\Gamma^{*}(A)\cup A_J$ and $\Gamma^{*}(B)\cup B_J$ are connected and their union is disconnected (by Proposition \ref{disconexos3}), it follows that $\Gamma^{*}(A)\cup A_J$ and $\Gamma^{*}(B)\cup B_J$ are components of $\mathbb{K}^2\setminus J$. Therefore, $\mathbb{K}^2\setminus J$ has two components.

Finally, we will verify the second statement. 
Finally, we will show the second statement. Let $x \in J$. We want to see that $x$ has adjacent points in each of the two components of $\mathbb{K}^2 \setminus J$. Let us consider two cases:

\medskip

\noindent\textbf{Case 1:} Suppose $x$ is pure. By Theorem \ref{Jordank}, there exist two points $a,b$ that are 4-adjacent to $\Gamma^{-1}(x)$ and belong to different 4-components of $\mathbb{Z}^2 \setminus \Gamma^{-1}(J)$, say $a \in A$ and $b \in B$. By Proposition \ref{4adyacentes}, $\Gamma(a), \Gamma(b)$ are adjacent to $x$ and are in the components $\Gamma^{*}(A) \cup A_J$ and $\Gamma^{*}(B) \cup B_J$, respectively.

\medskip 

\noindent\textbf{Case 2:} Suppose $x$ is mixed. Let $A(x) = \{a,b,c,d\}$, and since $J$ is a Jordan curve, we have $|A(x) \cap J| = 2$, and we will assume, without loss of generality, that $A(x) \cap J = \{c,d\}$. Note that $\Gamma^{-1}(c)$ and $\Gamma^{-1}(d)$ are 8-adjacent and not 4-adjacent. Let $\Gamma^{-1}(d) = p$, and we will assume without loss of generality that $\Gamma^{-1}(c) = p_3$. Then we have $\Gamma^{-1}(a) = p_2$ and $\Gamma^{-1}(b) = p_4$. Since $\Gamma^{-1}(J)$ is an closed 8-curve, by Proposition \ref{diferentescomponentes}, we have that $\Gamma^{-1}(a) = p_2$ and $\Gamma^{-1}(b) = p_4$ belong to different 4-components of $\mathbb{Z}^2 \setminus \Gamma^{-1}(J)$. It follows that $a$ and $b$ are in different components of $\mathbb{K}^2 \setminus J$ and $a, b \in A(x)$.

\end{proof}

\bigskip

Our final results address curves for which $\Gamma^{*}(\Gamma^{-1}(J)) \setminus J$ is not empty. However, a complete solution to this problem remains elusive, and we focus solely on a particular case.

Consider the set:
\[
S_J = \{m \in \Gamma^{*}(\Gamma^{-1}(J)) \mid m \text{ is mixed and } \left| A(m) \cap J \right| = 3 \}.
\]
We conjecture that our approach is applicable to Jordan curves satisfying $\Gamma^{*}(\Gamma^{-1}(J)) = J \cup S_J$. Nonetheless, we have only established this result in the specific case where $S_J$ contains a single element. 

\begin{propo}
\label{J_1_Jordan}
Let $J$ be a Jordan curve with more than four points and let $m \in S_J$. Let $a,b,c$ be such that $A(m) \cap J = \{a, b, c\}$. Suppose that $a$ and $b$ are closed points and $c$ is an open point (analogously, if $a$ and $b$ are open and $c$ is closed). Then:

\begin{enumerate}
\item[(i)] $J_m = J \setminus \{c\} \cup \{m\}$ is a Jordan curve.

\item[(ii)] $S_{J_m} \subseteq S_J$ and $m \notin S_{J_m}$.
\end{enumerate}
\end{propo}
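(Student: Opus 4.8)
The plan is to verify the two parts using the characterization of Jordan curves given in Proposition \ref{curvacarac} together with the known structure of adjacencies in $\mathbb{K}^2$. Throughout, I write $J_m = (J\setminus\{c\})\cup\{m\}$. The key geometric fact to exploit is that since $m$ is mixed with $A(m)\cap J = \{a,b,c\}$ and $a,b$ are closed while $c$ is open, we must have (looking at Figure \ref{adyacencias_plano}) that $cl(m) = \{a,b,m\}$ (so $a,b$ are the two closed points adjacent to $m$) and $c$ is one of the two open points of $N(m)$; call the other open point $c'$. Note $c'\notin J$, since otherwise $|A(m)\cap J|=4$ and then $m\in\Gamma^*(\Gamma^{-1}(J))$ would force $m\in J$ by Proposition \ref{curvacarac}, contradicting $m\in S_J$ (which presupposes $m\notin J$; if $m\in J$ the statement is vacuous or handled trivially). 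Actually more carefully: $m\in\Gamma^*(\Gamma^{-1}(J))$ means $N(m)\subseteq \Gamma(\Gamma^{-1}(J))\cup\{m\}$ or $cl(m)\subseteq\Gamma(\Gamma^{-1}(J))\cup\{m\}$, and since $cl(m)=\{a,b,m\}\subseteq J\cup\{m\}$, the second alternative holds; this is consistent with only $c$ (not $c'$) lying in $J$.

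For part (i), I would check the three conditions of Proposition \ref{curvacarac} for $J_m$. Cardinality: $|J_m| = |J|$, so $|J_m|\geq 5 > 4$. The adjacency condition $|A(x)\cap J_m| = 2$ for each $x\in J_m$ is the heart of the matter, split by cases. For $x = m$: $A(m)\cap J_m = (A(m)\cap J)\setminus\{c\} \cup (\{m\}\cap A(m)) = \{a,b\}$, which has two elements. For $x = a$ (and symmetrically $x=b$): in $J$ we had $A(a)\cap J = \{c, e_a\}$ for some second point $e_a$ (here I use that in $J$, the point $a$ has exactly two $J$-neighbors, one of which must be $c$ — this needs justification: $c$ and $a$ are adjacent and both lie in $J$, and since $J$ cannot "turn at a mixed point" the structure near $a$ is forced; I should argue $c$ is indeed a $J$-neighbor of $a$, which follows because $a,b,c$ are the only $J$-points adjacent to $m$ and $J$ restricted to a neighborhood of $m$ behaves like an arc passing through... actually the cleanest route is: $\Gamma^{-1}(c)$ and $\Gamma^{-1}(a)$ are $4$-adjacent by Proposition \ref{4adyacentes} since $c\in A(a)$ and both are pure, and one analyzes the closed $8$-curve $\Gamma^{-1}(J)$). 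Replacing $c$ by $m$ in the neighborhood of $a$: since $c\in N(m)$ and $a\in cl(m)$, we have $a\in A(m)$, and I claim $A(a)\cap J_m = \{m, e_a\}$, again of size two. The point $x = c'$ does not arise since $c'\notin J_m$. For all other $x\in J\setminus\{a,b,c\}$: such $x$ is not adjacent to $m$ (since $A(m)\cap J = \{a,b,c\}$) and not adjacent to $c$ in a way that matters — one must check $A(x)\cap J_m = A(x)\cap J$, which holds as long as $x$ is not adjacent to $c$ or, if it is, the swap is compensated; but any $x\in J$ adjacent to $c$ is a $J$-neighbor of $c$, hence $x\in\{a,b\}$ already handled. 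Finally, connectedness of $J_m$: $J\setminus\{c\}$ is an arc (as $J$ is a Jordan curve), and $m$ is adjacent to both $a$ and $b$, which are the two endpoints of this arc (since $c$ was an interior point of the arc $J\setminus\{c'\}$... more precisely $a,b$ are the $J$-neighbors of $c$, hence become the endpoints of $J\setminus\{c\}$); attaching $m$ to both endpoints yields a connected set. Then Proposition \ref{curvacarac} gives that $J_m$ is a Jordan curve.

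For part (ii), I would take $m'\in S_{J_m}$ and show $m'\in S_J$ and $m'\neq m$. First $m\notin S_{J_m}$: by definition $S_{J_m}$ consists of mixed points of $\Gamma^*(\Gamma^{-1}(J_m))$ not in $J_m$, but $m\in J_m$, so $m\notin S_{J_m}$ trivially. For the inclusion, suppose $m'$ is mixed, $m'\in\Gamma^*(\Gamma^{-1}(J_m))$, $m'\notin J_m$, and $|A(m')\cap J_m| = 3$. I must show $m'\in\Gamma^*(\Gamma^{-1}(J))$, $m'\notin J$, and $|A(m')\cap J| = 3$. The relation between $J$ and $J_m$ is that they differ only at the single point $c$ versus $m$, and $c,m$ are adjacent. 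If $m'$ is adjacent to neither $c$ nor $m$, then $A(m')\cap J_m = A(m')\cap J$ and $\Gamma(\Gamma^{-1}(J_m))$ and $\Gamma(\Gamma^{-1}(J))$ agree in $N(m')\cup cl(m')$... wait, $\Gamma^{-1}(J_m)$ may differ from $\Gamma^{-1}(J)$ only in that it drops $\Gamma^{-1}(c)$ (since $m$ is mixed and not in the range of $\Gamma$). So $\Gamma(\Gamma^{-1}(J_m)) = \Gamma(\Gamma^{-1}(J))\setminus\{c\}$. Thus if $c\notin A(m')$ the membership $m'\in\Gamma^*(\cdot)$ is unaffected, and if $m'$ is adjacent to $c$ one needs a short case analysis (but $m'$ adjacent to $c$ with $c$ open and $m'$ mixed means $m'\in N^{-1}$-type configuration; only finitely many $m'$ to consider around $c$, and one checks each). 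The main obstacle I expect is precisely this bookkeeping: controlling how the sets $A(m')\cap J$ versus $A(m')\cap J_m$, and the defining conditions $N(m')\subseteq\Gamma(\cdot)\cup\{m'\}$ / $cl(m')\subseteq\Gamma(\cdot)\cup\{m'\}$, transform under the single-point modification near $c$ — in particular ruling out the scenario where $m'$ gains or loses membership in $S$ because of the swap, or where $|A(m')\cap J_m| = 3$ but $|A(m')\cap J| \ne 3$. This will require carefully using that $c$ is an \emph{open} point (so it lies in $N$ of its mixed neighbors, not in $cl$) and the explicit adjacency picture of Figure \ref{adyacencias_plano}. The rest — cardinality, connectedness, and the trivial direction $m\notin S_{J_m}$ — is routine.
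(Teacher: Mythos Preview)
Your argument for part (i) is correct and matches the paper's approach via Proposition~\ref{curvacarac}. The worry about whether $c$ is a $J$-neighbor of $a$ has a simpler resolution than the detour through $\Gamma^{-1}$ you sketch: since $a$ is closed and $c$ is open and both lie in $A(m)$ for the mixed point $m$, they differ by $(\pm1,\pm1)$ and are therefore adjacent as pure points; thus $c\in A(a)\cap J$, and since $|A(a)\cap J|=2$ you are done immediately.

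For part (ii) there are two issues. First, your argument for $m\notin S_{J_m}$ misreads the definition: $S_{J_m}$ does \emph{not} include the clause ``not in $J_m$'' (that is a consequence, not a hypothesis). The paper instead uses that part (i) gives $|A(m)\cap J_m|=2\neq 3$. Second, and more significantly, your plan for the inclusion $S_{J_m}\subseteq S_J$ is far more complicated than necessary because you miss the key observation the paper exploits: since $m'$ is mixed, every point of $A(m')$ is \emph{pure}, so in particular $m\notin A(m')$. Hence
\[
A(m')\cap J_m \;=\; A(m')\cap (J\setminus\{c\}) \;\subseteq\; A(m')\cap J,
\]
and the three points of $A(m')\cap J_m$ already lie in $J$. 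This gives $|A(m')\cap J|\geq 3$; equality follows because $|A(m')\cap J|=4$ would force $A(m')\subseteq J$ and hence $J=A(m')$, contradicting $|J|>4$. The membership $m'\in\Gamma^*(\Gamma^{-1}(J))$ is then automatic by pigeonhole: three pure $J$-points distributed among the two-element sets $N(m')\setminus\{m'\}$ and $cl(m')\setminus\{m'\}$ force one of them to be contained in $J$. Your proposed case analysis on whether $m'$ is adjacent to $c$ or to $m$ is unnecessary; indeed the case ``$m'$ adjacent to $m$'' is vacuous since two mixed points are never adjacent.
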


\begin{proof}
(i) Observe that $J_m$ is connected.  To show that $J$ is a Jordan curve we need to verify that, for all $x \in J_m$, the following  holds 
$$
\lvert A(x) \cap J_m \rvert = 2.
$$
Notice that we only need to verify this for $x=m,a,b$. 
For $x=m$, let $d$ be such that $A(m) = \{a, b, c, d\}$. Since $\{a, b, c, d\}$ is a Jordan curve and $\lvert J \rvert > 5$, it follows that $d \notin J$. Therefore, $d \notin J_m$ and thus  $\lvert A(m) \cap J_m \rvert = 2$. 

For $x=a$. Since $a \in J$, we have that $A(a) \cap J = \{y, c\}$. Note that $y \in J_m$, $c \notin J_m$, and $a \in A(m)$. It follows then that $\lvert A(a) \cap J_m \rvert = 2$. Analogously, we have that $\lvert A(b) \cap J_m \rvert = 2$. 

\medskip 

(ii)  From (i), we know that  $\lvert A(m) \cap J_m \rvert = 2$.
Thus, $m \notin S_{J_m}$.
To see that $S_{J_m} \subseteq S_J$, let $x \in S_{J_m}$ with $x \neq m$. By the definition, of $S_{J_m}$, $x$ is a mixed point and it satisfies that $A(x) \cap J_m = \{y, w, p\}$. Given that $J_m \subseteq J \cup \{m\}$, it follows that $y, w, p \in J$, since $y, w, p$ are pure points and $m$ is mixed. Therefore, $x \in S_J$, because $A(x) \cap J = \{y, w, p\}$.

\end{proof}

Our last result is an extension of Theorem \ref{mixtoconcodiccion}. An example of the type of curve considered is given in Figure \ref{forma_1}.

\begin{figure}

\caption{A curve as in  Theorem \ref{dos_compunentes_1}.}
\label{forma_1}
\end{figure}

\begin{teo}
\label{dos_compunentes_1}
Let $J$ be a Jordan curve such that $S_J=\{m\}$ and $\Gamma^{*}(\Gamma^{-1}(J)) = J\cup\{m\}$.  Then  $\mathbb{K}^{2} \setminus J$ has two components.
\end{teo}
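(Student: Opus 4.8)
The plan is to reduce the statement to Theorem~\ref{mixtoconcodiccion} applied to a ``straightened'' curve $J_m$ obtained from $J$, and then to recover the component structure of $\mathbb{K}^2\setminus J$ from that of $\mathbb{K}^2\setminus J_m$ by a local surgery around the mixed point $m$.

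First I would fix the four pure points $a,b,c,d$ of $A(m)$ labelled so that $A(m)\cap J=\{a,b,c\}$ (hence $d\notin J$); since $N(m)\setminus\{m\}$ and $cl(m)\setminus\{m\}$ are the two parity pairs of $A(m)$, two of $a,b,c$ have one type and one has the other, so we may assume $a,b$ closed and $c$ open (the case $a,b$ open, $c$ closed being dual). By Proposition~\ref{J_1_Jordan}(i), $J_m:=(J\setminus\{c\})\cup\{m\}$ is a Jordan curve with $|J_m|=|J|>4$. Next I would check that $\Gamma^{*}(\Gamma^{-1}(J_m))=J_m$: the inclusion $\supseteq$ is Proposition~\ref{1contenencia}(ii), and for $\subseteq$ one uses $\Gamma^{-1}(J_m)=\Gamma^{-1}(J\setminus\{c\})\subseteq\Gamma^{-1}(J)$, so $\Gamma(\Gamma^{-1}(J_m))\subseteq\Gamma(\Gamma^{-1}(J))$; thus any mixed $e\in\Gamma^{*}(\Gamma^{-1}(J_m))$ already satisfies $N(e)\subseteq\Gamma(\Gamma^{-1}(J))\cup\{e\}$ or $cl(e)\subseteq\Gamma(\Gamma^{-1}(J))\cup\{e\}$, so by \eqref{defoperador*} and the hypothesis $e\in\Gamma^{*}(\Gamma^{-1}(J))=J\cup\{m\}$, and since $e$ is mixed and $c$ is pure this forces $e\in J_m$ (the pure case is Proposition~\ref{1contenencia}(i)). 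With this, Theorem~\ref{mixtoconcodiccion} gives that $\mathbb{K}^2\setminus J_m$ has exactly two components $U_1,U_2$ and that every point of $J_m$ is adjacent to both.

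I would then set $W:=\mathbb{K}^2\setminus(J\cup\{m\})$, noting that $\mathbb{K}^2\setminus J=W\cup\{m\}$ and $\mathbb{K}^2\setminus J_m=W\cup\{c\}$ (because $J_m\cup\{c\}=J\cup\{m\}$). Since $d$ is pure, $d\neq m$ and $d\notin J$, we have $d\in W$. As $a,b\in J_m$, the neighbours of $m$ in $W\cup\{c\}$ are exactly $\{c,d\}$, so the fact that $m\in J_m$ is adjacent to both components forces $c$ and $d$ into different ones; say $c\in U_1$, $d\in U_2$. On the other hand, since $a,b,c\in J$, the only neighbour of $m$ in $\mathbb{K}^2\setminus J=W\cup\{m\}$ is $d$, so $m$ merely attaches to the component of $W$ containing $d$. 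Hence everything reduces to showing that $W$ has exactly two components. Now $U_2\subseteq W$ is clopen in $W\cup\{c\}$ and connected, hence a component of $W$, so it remains to prove that $U_1\setminus\{c\}=W\setminus U_2$ is connected. Since $c$ lies in the $3\times3$ block around $m$ together with $a,b$ it is adjacent to both, so $A(c)\cap J=\{a,b\}$ ($J$ being a Jordan curve) and $d\notin A(c)$; thus the neighbours of $c$ in $W\cup\{c\}$ are the five points of $A(c)\setminus\{a,b,m\}$, all in $W\cap U_1$, and a direct inspection of that block shows these five are joined to one another by a chain inside $W$ that avoids $c$. Because $c$ is open in $\mathbb{K}^2$, $\{c\}$ is open in the finite space $U_1$, and a short argument (any clopen splitting of $U_1\setminus\{c\}$ would have to separate those five neighbours, which it cannot, whence it would extend to a clopen splitting of $U_1$) shows $U_1\setminus\{c\}$ is connected. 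Therefore $W=(U_1\setminus\{c\})\sqcup U_2$ has exactly two components, and so does $\mathbb{K}^2\setminus J=(U_1\setminus\{c\})\sqcup(U_2\cup\{m\})$, the two pieces being connected ($m$ is adjacent to $d\in U_2$) and mutually non-adjacent.

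The main obstacle is this last surgery step: showing that deleting the open point $c$ from the component $U_1$ does not disconnect it. It rests on the local adjacency bookkeeping in $\mathbb{K}^2$ around $m$ (that $c$'s five neighbours in $W$ form a connected configuration there) together with the elementary fact that removing an open point of a finite $T_0$-space whose neighbourhood meets only one component of the complement cannot raise the number of components. Everything else is routine verification with the adjacencies described in Section~\ref{SeccionTopK}, plus the mirror treatment of the dual case in which $c$ is the closed point.
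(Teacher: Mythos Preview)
Your proposal is correct and follows essentially the same route as the paper: replace $J$ by $J_m=(J\setminus\{c\})\cup\{m\}$, verify $\Gamma^{*}(\Gamma^{-1}(J_m))=J_m$, apply Theorem~\ref{mixtoconcodiccion}, and then perform a local surgery to pass from the two components of $\mathbb{K}^2\setminus J_m$ to those of $\mathbb{K}^2\setminus J$. The only notable difference is in how you place $c$ and $d$ in distinct components: you invoke the ``adjacent to both components'' clause of Theorem~\ref{mixtoconcodiccion} directly (since $A(m)\cap(\mathbb{K}^2\setminus J_m)=\{c,d\}$), whereas the paper goes back to Proposition~\ref{diferentescomponentes} applied to the closed $8$-curve $\Gamma^{-1}(J_m)$; your shortcut is cleaner. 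Your clopen argument for the connectedness of $U_1\setminus\{c\}$ is a slight repackaging of the paper's arc-rerouting argument (both rest on the same fact that the neighbours of $c$ in $W$ are connected among themselves), and the dual case with $c$ closed indeed goes through symmetrically as you indicate.
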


\begin{proof} Let  $A(m)=\{a,b,c,d\}$  such that $A(m)\cap J=\{a,b,c\}$ where  $a$ and  $b$ are  both open or closed and $J_m = J \setminus \{c\} \cup \{m\}$. By Proposition \ref{J_1_Jordan}, $J_m$ is a Jordan curve. We claim that   $\Gamma^{*}(\Gamma^{-1}(J_m)) = J_m$. For the proposition \ref{1contenencia}, we have that \(J_m \subseteq \Gamma^{*}\left(\Gamma^{-1} \left(J_m\right)\right)\). Note that 
$$
\Gamma^{*}\left(\Gamma^{-1} \left(J_m\right)\right)\subseteq \Gamma^{*}(\Gamma^{-1}(J))= J\cup\{m\}=J_m\cup\{c\}.
$$ 
We only need to verify that $c \not\in \Gamma^{*}\left(\Gamma^{-1} \left(J_m\right)\right)$, and this is true because $c$ is pure. 
Thus, we conclude that $ J_m = \Gamma^{*}\left(\Gamma^{-1} \left(J_m\right)\right)$.

\medskip

We will show that $\mathbb{K}^{2} \setminus J$ has two components. From Theorem \ref{mixtoconcodiccion} (and its proof), we know that $\mathbb{K}^{2} \setminus J_m$ has two connected components, which we denote as $C$ and $D$, where
$$
C = \Gamma^{*}(A) \cup A_{J_m}, \quad D = \Gamma^{*}(B) \cup B_{J_m},
$$
and $A$ and $B$ are the two 4-components of $\mathbb{Z}^2 \setminus \Gamma^{-1}(J_m)$.

We know that $ \left| A(m) \cap J \right| = 3$, $m \notin J$, and $A(m) \setminus J_m = \{c, d\}$, where $c \in J$.
Note that
$$
\mathbb{K}^2 \setminus J = (C \cup D \cup \{m\}) \setminus \{c\}.
$$

Now we will show that either $C \cup \{m\}$ or $D \cup \{m\}$ is connected. For this, it is sufficient to show that $d \in C$ or $d \in D$.

We have that $\Gamma^{-1}(a)$ and $\Gamma^{-1}(b)$ are 8-adjacent (but not 4-adjacent), 
$\Gamma^{-1}(a), \Gamma^{-1}(b) \in \Gamma^{-1}(J_m)$ and $\Gamma^{-1}(J_m)$ is an closed 8-curve. Following the notation of  Proposition \ref{diferentescomponentes}, let $p=\Gamma^{-1}(a)$ and $p_i=\Gamma^{-1}(b)$. Then, it follows that
$$
\Gamma^{-1}(c) = p_{i+1} \quad \text{and} \quad \Gamma^{-1}(d) = p_{i-1}.
$$
By Proposition \ref{diferentescomponentes}, applied to the closed 8-curve $\Gamma^{-1}(J_m)$, we know that $\Gamma^{-1}(c)$ and $\Gamma^{-1}(d)$ belong to different 4-components  of $\mathbb{Z}^2 \setminus \Gamma^{-1}(J_m)$. Thus, we have that $d \in C$ or $d \in D$. Without loss of generality, suppose that $d \in C$, then $C \cup \{m\}$ is connected.

Now, let us verify that $D \setminus \{c\}$ is connected. Note that $D \cap A(c)$ is connected (see Figure \ref{A(c)interDcon}) and that, for $x, y \in D \setminus \{c\}$, there exists an arc $Q \subseteq D$ connecting them. Additionally, observe that if $c \in Q$, it can be replaced by one of the elements of $Q \cap A(c)$, as appropriate (see Figure \ref{A(c)interDcon}). It follows that $D \setminus \{c\}$ is connected.

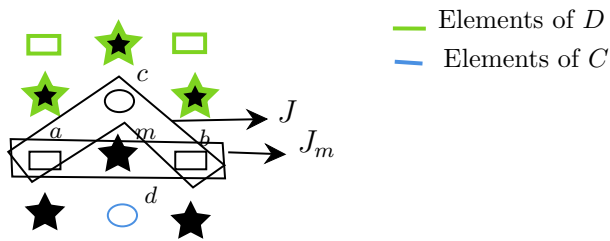
\begin{figure}
    \centering
\tikzset{every picture/.style={line width=0.75pt}} 

\begin{tikzpicture}[x=0.7pt,y=0.7pt,yscale=-1,xscale=1]

\draw   (157.81,146.55) -- (174.56,146.55) -- (174.56,156.08) -- (157.81,156.08) -- cycle ;
\draw   (236.6,146.55) -- (253.35,146.55) -- (253.35,156.08) -- (236.6,156.08) -- cycle ;
\draw  [color={rgb, 255:red, 74; green, 144; blue, 226 }  ,draw opacity=1 ][fill={rgb, 255:red, 255; green, 255; blue, 255 }  ,fill opacity=1 ][line width=0.75]  (200.16,181) .. controls (200.16,177.71) and (203.69,175.05) .. (208.04,175.05) .. controls (212.4,175.05) and (215.93,177.71) .. (215.93,181) .. controls (215.93,184.29) and (212.4,186.95) .. (208.04,186.95) .. controls (203.69,186.95) and (200.16,184.29) .. (200.16,181) -- cycle ;
\draw  [fill={rgb, 255:red, 0; green, 0; blue, 0 }  ,fill opacity=1 ] (205.58,137.88) -- (208.62,144) -- (215.42,144.98) -- (210.5,149.74) -- (211.66,156.46) -- (205.58,153.29) -- (199.5,156.46) -- (200.66,149.74) -- (195.74,144.98) -- (202.54,144) -- cycle ;
\draw  [fill={rgb, 255:red, 0; green, 0; blue, 0 }  ,fill opacity=1 ] (166.19,169.54) -- (169.23,175.66) -- (176.03,176.64) -- (171.11,181.4) -- (172.27,188.13) -- (166.19,184.95) -- (160.1,188.13) -- (161.27,181.4) -- (156.35,176.64) -- (163.14,175.66) -- cycle ;
\draw  [fill={rgb, 255:red, 0; green, 0; blue, 0 }  ,fill opacity=1 ] (244.95,174.13) -- (247.99,180.24) -- (254.79,181.23) -- (249.87,185.99) -- (251.03,192.71) -- (244.95,189.54) -- (238.86,192.71) -- (240.03,185.99) -- (235.1,181.23) -- (241.9,180.24) -- cycle ;
\draw  [color={rgb, 255:red, 126; green, 211; blue, 33 }  ,draw opacity=1 ][line width=1.5]  (157.2,83.97) -- (173.95,83.97) -- (173.95,93.49) -- (157.2,93.49) -- cycle ;
\draw  [color={rgb, 255:red, 126; green, 211; blue, 33 }  ,draw opacity=1 ][line width=1.5]  (235.99,83.17) -- (252.74,83.17) -- (252.74,92.7) -- (235.99,92.7) -- cycle ;
\draw  [color={rgb, 255:red, 0; green, 0; blue, 0 }  ,draw opacity=1 ][line width=0.75]  (198.56,119.21) .. controls (198.56,115.92) and (202.09,113.26) .. (206.45,113.26) .. controls (210.8,113.26) and (214.33,115.92) .. (214.33,119.21) .. controls (214.33,122.49) and (210.8,125.16) .. (206.45,125.16) .. controls (202.09,125.16) and (198.56,122.49) .. (198.56,119.21) -- cycle ;
\draw  [color={rgb, 255:red, 126; green, 211; blue, 33 }  ,draw opacity=1 ][fill={rgb, 255:red, 0; green, 0; blue, 0 }  ,fill opacity=1 ][line width=2.25]  (205.95,78.5) -- (208.99,84.62) -- (215.79,85.6) -- (210.87,90.36) -- (212.03,97.09) -- (205.95,93.91) -- (199.87,97.09) -- (201.03,90.36) -- (196.11,85.6) -- (202.91,84.62) -- cycle ;
\draw  [color={rgb, 255:red, 126; green, 211; blue, 33 }  ,draw opacity=1 ][fill={rgb, 255:red, 0; green, 0; blue, 0 }  ,fill opacity=1 ][line width=2.25]  (166.32,106.27) -- (169.36,112.39) -- (176.16,113.37) -- (171.24,118.13) -- (172.4,124.85) -- (166.32,121.68) -- (160.24,124.85) -- (161.4,118.13) -- (156.48,113.37) -- (163.28,112.39) -- cycle ;
\draw  [color={rgb, 255:red, 126; green, 211; blue, 33 }  ,draw opacity=1 ][fill={rgb, 255:red, 0; green, 0; blue, 0 }  ,fill opacity=1 ][line width=2.25]  (247.32,106.96) -- (250.36,113.07) -- (257.16,114.06) -- (252.24,118.82) -- (253.4,125.54) -- (247.32,122.37) -- (241.24,125.54) -- (242.4,118.82) -- (237.48,114.06) -- (244.28,113.07) -- cycle ;
\draw   (206.2,105.9) -- (263.2,153.9) -- (246.2,165.9) -- (209.2,130.9) -- (159.2,162.9) -- (146.35,146.64) -- cycle ;
\draw   (262.2,141.9) -- (263.51,161.01) -- (148.51,160.01) -- (147.65,139.75) -- cycle ;
\draw    (265.2,146.9) -- (293,147.89) ;
\draw [shift={(296,148)}, rotate = 182.05] [fill={rgb, 255:red, 0; green, 0; blue, 0 }  ][line width=0.08]  [draw opacity=0] (10.72,-5.15) -- (0,0) -- (10.72,5.15) -- (7.12,0) -- cycle    ;
\draw    (234.7,129.9) -- (283.2,128.96) ;
\draw [shift={(286.2,128.9)}, rotate = 178.89] [fill={rgb, 255:red, 0; green, 0; blue, 0 }  ][line width=0.08]  [draw opacity=0] (10.72,-5.15) -- (0,0) -- (10.72,5.15) -- (7.12,0) -- cycle    ;
\draw [color={rgb, 255:red, 126; green, 211; blue, 33 }  ,draw opacity=1 ][line width=1.5]    (354.2,79.9) -- (372,80) ;
\draw [color={rgb, 255:red, 74; green, 144; blue, 226 }  ,draw opacity=1 ][line width=1.5]    (355.2,97.9) -- (370.2,98.9) ;

\draw (213.23,132.41) node [anchor=north west][inner sep=0.75pt]  [font=\scriptsize]  {$m$};
\draw (214.03,100.49) node [anchor=north west][inner sep=0.75pt]  [font=\footnotesize]  {$c$};
\draw (218.44,163.71) node [anchor=north west][inner sep=0.75pt]  [font=\scriptsize]  {$d$};
\draw (167.02,132.1) node [anchor=north west][inner sep=0.75pt]  [font=\scriptsize]  {$a$};
\draw (248.02,132.79) node [anchor=north west][inner sep=0.75pt]  [font=\scriptsize]  {$b$};
\draw (301,133.4) node [anchor=north west][inner sep=0.75pt]    {$J_{m}$};
\draw (290,117.4) node [anchor=north west][inner sep=0.75pt]    {$J$};
\draw (377,68.4) node [anchor=north west][inner sep=0.75pt]  [font=\footnotesize]  {\text{Elements of $D$}};
\draw (380,89.4) node [anchor=north west][inner sep=0.75pt]  [font=\footnotesize]  {\text{Elements of $C$}};

\end{tikzpicture}
    \caption{$A(c)\cap D$ is connected}
    \label{A(c)interDcon}
\end{figure}

Since $D$ and $C$ are not adjacent and $m$ is not adjacent to $D \setminus \{c\}$, we conclude that $\mathbb{K}^{2} \setminus J$ has exactly two connected components.

\end{proof}

\end{document}